\def\mf{\mathfrak}
\def\ds{\displaystyle}
\newcommand{\id}[1]{\langle {#1}\rangle}
\newtheorem{theorem}{Theorem}[section]
\newtheorem{example}[theorem]{Example}
\newtheorem{proposition}[theorem]{Proposition}
\newtheorem{lemma}[theorem]{Lemma}
\newtheorem{corollary}[theorem]{Corollary}
\theoremstyle{definition}
\newtheorem{defn}[theorem]{Definition}
\newcommand{\stor}{\mathscr{T}{\kern -3pt {or}}}
\newcommand{\sext}{\mathscr{E}{\kern -1pt {xt}}}
\newcommand{\Ext}{\mathscr{E}{\kern -1pt {xt}}}
\newcommand{\shom}{\mathscr{H}{\kern -3pt {om}}}
\newcommand{\bj}{\mathbf{j}}
\newcommand{\Gm}{G^{\text{min}}}
\newcommand{\bb}[1]{\mathbb{#1}}
\newcommand{\idshf}[2]{\mathcal{O}_{#1}(-#2)}
\newcommand{\xyp}[1]{(\bar{#1}_P)_\bb{P}}
\newcommand{\hx}{\hat{\xi}^w_P}
\renewcommand{\ss}[1]{\mathcal{O}_{#1}}
\renewcommand{\bb}[1]{\mathbb{#1}}
\begin{document}

\title{Positivity in $T$-Equivariant $K$-theory of partial flag varieties associated to
Kac-Moody groups}

\author{Joseph Compton and Shrawan Kumar}

\maketitle




\noindent
{\bf Abstract:} We prove sign-alternation of the product structure constants in the basis  dual to the basis consisting of the structure sheaves of Schubert varieties in the torus-equivariant Grothendieck group of coherent sheaves on the partial flag varieties $G/P$ associated to an arbitrary symmetrizable Kac-Moody group $G$, where $P$ is any parabolic subgroup of finite type.  This extends the previous work of Kumar from $G/B$ to $G/P$. When $G$ is of finite type, i.e., it is a semisimple group, then it was proved by Anderson-Griffeth-Miller.

\section{Introduction}

This is a continuation of Kumar \cite{Ku}, the notation of which we freely use. 

Let $G$ be any symmetrizable Kac-Moody group over $\mathbb{C}$ completed along the negative roots and $G^{\text{min}}\subset G$ the `minimal' Kac-Moody group as in \cite[\S 7.4]{K}. Let $B$ be the standard (positive) Borel subgroup, $B^-$ the standard negative Borel subgroup, $H=B\cap B^-$ the standard maximal torus and $W$ the Weyl group. Let $\bar{X} = G/B$ be the `thick' flag variety (introduced by Kashiwara) which contains the standard flag variety $X=G^{\text{min}}/B$. Let $T$ be the adjoint torus, i.e.,  $T:=H/Z(\Gm)$, where $Z(\Gm)$ denotes the center of $\Gm$ and let $R(T)$ denote the representation ring of $T$. For any $w\in W$ (the Weyl group), we have the Schubert cell $C_w:=BwB/B\subset X$, the Schubert variety $X_w := \overline{C_w} \subset X$, the opposite Schubert cell $C^w:=B^-wB/B\subset \bar{X}$, and the opposite Schubert variety $X^w: = \overline{C^w}\subset\bar{X}$. When $G$ is a (finite-dimensional) semisimple group, it is referred to as the finite case. \\
\indent Let $P$ be any standard parabolic subgroup of $G$ of finite type (i.e., the Levi component of $P$ is finite dimensional) and set $W^P$ to be the set of minimal length coset representatives of $W/W_P$, where $W_P$ is the Weyl group of $P$. Then, for any $w\in W^P$, we have the Schubert cell $C_w^P: = BwP/P\subset{X_P}$, Schubert variety $X_w^P: = \overline{C_w^P}\subset{X_P}$, the opposite Schubert cell $C^w_P:=B^-wP/P\subset \bar{X}_P$, and the opposite Schubert variety $X^w_P:= \overline{C^w_P}\subset \bar{X}_P$ where $X_P=\Gm/P$ and $\bar{X}_P = G/P$. \\
\indent Let $K^0_T(\bar{X}_P)$ be the Grothendieck group of $T$-equivariant  coherent $\ss{\bar{X}_P}$-modules of the (in general) nonquasi-compact scheme $\bar{X}_P$ (cf. Definition 3.1). Let $\{[\xi^w_P]\}_{w\in W^P}$ be the `basis' of $K^0_T(\bar{X}_P)$ given by $\xi^w_P:=\idshf{X^w_P}{\partial X^w_P}$ (where  $\partial X^w_P:=  X^w_P\setminus C^w_P$) 
and express the product in $K^0_T(\bar{X}_P)$ in this `basis': \begin{equation}[\xi^u_P]\cdot[\xi^v_P] = \sum_{w\in W^P}d_{u,v}^w(P)[\xi^w_P],\,\,\, \text{ for some unique} \,d_{u,v}^w(P)\in R(T).\label{product}
\end{equation}
The above sum, in general, is infinite. 

The following result is our main theorem (cf. Theorem \ref{mainthm}). This was first conjectured by Graham-Kumar \cite{GK} in the finite case, proven in this case by Anderson-Griffeth-Miller \cite{AGM}, and then proven in the general Kac-Moody case for $G/B$ by Kumar \cite{Ku}. 
\begin{theorem} For any $u,v,w\in W^P$,
	\begin{equation*}(-1)^{l(u)+l(v)+l(w)}d_{u,v}^w(P)\in \mathbb{Z}_{\geq 0}[(e^{-\alpha_1}-1),\cdots, (e^{-\alpha_r}-1)],
	\end{equation*} where $\{\alpha_1,\cdots, \alpha_r\}$ are the simple roots, i.e., $(-1)^{l(u)+l(v)+l(w)}d_{u,v}^w(P)$ is a polynomial in the variables $x_1 = e^{-\alpha_1}-1,\cdots , x_r = e^{-\alpha_r}-1$ with nonnegative integral coefficients. 
	\end{theorem}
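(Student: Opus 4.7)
My strategy is to reduce the theorem to the previously established $G/B$ case in \cite{Ku} via the natural projection $\pi\colon \bar{X} = G/B \to \bar{X}_P = G/P$. Since $P$ has finite type, $\pi$ is a smooth proper morphism whose fibers are isomorphic to the finite-dimensional flag variety $P/B$, so $\pi$ is flat and $R\pi_*\mathcal{O}_{\bar{X}} = \mathcal{O}_{\bar{X}_P}$ (the higher direct images vanish because $H^{>0}(P/B,\mathcal{O})=0$).

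The first step is a geometric identification at the level of opposite Schubert varieties: for every $w \in W^P$,
\[
\pi^{-1}(X^w_P) = X^w \quad \text{and} \quad \pi^{-1}(\partial X^w_P) = \partial X^w.
\]
These rely on the Bruhat-order fact that for $w \in W^P$ and any $w' \in W$, one has $w' \geq w$ iff $\overline{w'} \geq w$, where $\overline{w'} \in W^P$ is the minimal-length representative of $w'W_P$. The reverse direction is immediate from $w' = \overline{w'}\cdot v$ with $\ell(w') = \ell(\overline{w'}) + \ell(v)$; the forward direction is the standard lifting property for minimal coset representatives. Combined with the cell-level identity $\pi(C^{w'}) = C^{\overline{w'}}_P$, the identifications follow.

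Flatness of $\pi$ commutes $\pi^*$ with the ideal sheaf construction, so the above yields $\pi^*[\xi^w_P] = [\xi^w]$ for every $w \in W^P$. Moreover, the projection formula together with $R\pi_*\mathcal{O}_{\bar{X}} = \mathcal{O}_{\bar{X}_P}$ yields $\pi_*\pi^* = \mathrm{id}$, so $\pi^*$ is an injective ring homomorphism $K^0_T(\bar{X}_P) \hookrightarrow K^0_T(\bar{X})$. Applying $\pi^*$ to the defining expansion \cref{product} and comparing with the analogous $G/B$ expansion $[\xi^u]\cdot[\xi^v] = \sum_{z \in W} d_{u,v}^z[\xi^z]$ from \cite{Ku}, the linear independence of $\{[\xi^z]\}_{z \in W}$ forces $d_{u,v}^w(P) = d_{u,v}^w$ for all $w \in W^P$ (and $d_{u,v}^z = 0$ for $z \notin W^P$ when $u,v \in W^P$, which is itself a noteworthy consequence). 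The sign-alternating positivity is then immediate from Kumar's theorem for $\bar{X}$.

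The principal obstacle is K-theoretic rather than geometric: the schemes $\bar{X}$ and $\bar{X}_P$ are non-quasi-compact and the expansion \cref{product} may be infinite, so the argument requires the K-theory formalism of Definition 3.1 to support flat pullback $\pi^*$ on the appropriate completed Grothendieck group, a ring structure making $\pi^*$ multiplicative despite Tor-finiteness issues at infinity, and compatibility of $\pi^*$ with possibly infinite expansions. Verifying these functorial properties in the Kashiwara-Kac-Moody setting, presumably developed earlier in the paper, is where the real work lies; once in hand, the reduction to \cite{Ku} is essentially immediate, and the geometric identifications above are standard modulo care with the underlying ind-scheme structures.
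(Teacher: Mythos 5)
There is a genuine gap, and it sits exactly at the step you flag as "immediate": the identification $\pi^{-1}(\partial X^w_P)=\partial X^w$ is false, and with it the claim $\pi^*[\xi^w_P]=[\xi^w]$. While it is true that $\pi^{-1}(X^w_P)=X^w$ for $w\in W^P$ (by the lifting property you cite), the boundary does not pull back correctly: $\pi^{-1}(C^w_P)=\bigsqcup_{v\in W_P}C^{wv}$, so
$\pi^{-1}(\partial X^w_P)=X^w\setminus\bigsqcup_{v\in W_P}C^{wv}$,
which omits all the cells $C^{wv}$ with $v\in W_P$, $v\neq e$; these lie in $\partial X^w$ whenever $W_P\neq\{e\}$. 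Hence $\pi^*\xi^u_P$ is the ideal sheaf of a strictly smaller boundary, and in $K^0_T(\bar{X})$ one gets $\pi^*[\xi^u_P]=\sum_{u'\in uW_P}[\xi^{u'}]$ rather than a single basis element. Comparing coefficients then gives the relation actually used in the paper (Lemma \ref{lemma5.4}, via \cite[Proposition 3.5]{GK}): $d^w_{u,v}(P)=\sum_{u'\in uW_P,\,v'\in vW_P}d^w_{u',v'}(B)$, not $d^w_{u,v}(P)=d^w_{u,v}(B)$. Your conclusion that the parabolic constants coincide with the Borel ones contradicts this known identity.

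Moreover, even with the correct sum formula the reduction to \cite{Ku} does not yield the theorem: the signs $(-1)^{l(u')+l(v')+l(w)}$ alternate as $u',v'$ range over the cosets $uW_P, vW_P$ (lengths within a coset take all values between $l(u)$ and $l(u)+l(w_o^P)$), so Kumar's positivity for each $d^w_{u',v'}(B)$ gives a sum of terms of mixed sign, from which no sign statement for $d^w_{u,v}(P)$ follows. This is precisely why the paper does not argue by pullback: it uses the coset sum only to get the integrality statement of Lemma \ref{lemma5.4}, and proves positivity by redoing the whole geometric argument for $G/P$ — the mixing space and mixing group, homological transversality for general $\gamma\in\Gamma$ (Theorem \ref{main}(A)), and the cohomology vanishing for the schemes $\mathcal{Z}_P$ built from parabolic Richardson varieties (Theorem \ref{main}(B)), together with the duality $\id{[\xi^u_P],[\ss{X_v^P}]}=\delta_{u,v}$ of Proposition \ref{duality}. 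The appendix makes the same point from another angle: the natural dualizing-sheaf twist that identifies $\xi^w$ with $\idshf{X^w}{\partial X^w}$ in the $B$ case acquires nontrivial (even negative) multiplicities $m^P_{w,v}$ for general $P$, so the parabolic basis genuinely does not match the Borel one under $\pi^*$.
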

	
The proof of the above theorem generally follows the proof in \cite{Ku} for the full flag variety $\bar{X}$. However, several of the geometric and cohomological results in the case of the full flag variety had to be generalized to the partial flag varieties $\bar{X}_P$. In particular, 
the definition of the basis in terms of the dualizing sheaves had to be modified in the parabolic case as the dualizing sheaf in this general case is not suitable.

In another related work, Baldwin-Kumar \cite{BaKu} proved an analogue of the above theorem for the basis consisting of the structure sheaves of the opposite Schubert varieties.

We have added an appendix  to determine the dualizing sheaf $\omega_{X^w_P}$ of the Cohen-Macaulay scheme $X^w_P \subset \bar{X}_P$  for any $w\in W^P$. Even though this result is not used in the paper, we believe that it is interesting on its own.

\vskip4ex
\noindent
{\bf Acknowledgements:} The second author was partially supported by the NSF grant  DMS-1802328. We thank the referee for useful comments.

\section{Notation}\label{notation}

We work over the field $\mathbb{C}$ of complex numbers. By a variety, we mean an algebraic variety over $\mathbb{C}$ which is reduced, but not necessarily irreducible. For a scheme $X$ and a closed subscheme $Y$, $\idshf{X}{Y}$ denotes the ideal sheaf of $Y$ in $X$. \\
\indent Let $G$ be any symmetrizable Kac-Moody group completed along the negative roots (as opposed to along the positive roots as in \cite[Chapter 6]{K}), and let $\Gm\subset G$ be the `minimal' Kac-Moody group as in \cite[\S 7.4]{K}. Let $B$ be the standard (positive) Borel subgroup, $B^-$ the standard negative Borel subgroup, $H=B\cap B^-$ the standard maximal torus and $W$ the Weyl group \cite[Chapter 6]{K}. For any standard parabolic subgroup $P$ of $G$ of finite type (i.e., the Levi component of $P$ is finite dimensional),
 let \begin{equation*}\bar{X}_P = G/P
\end{equation*} be the `thick' partial flag variety which contains the standard Kac-Moody  partial flag ind-variety \begin{equation*}X_P = \Gm/P.
\end{equation*}

If $G$ is not of finite type, then $\bar{X}_P$ is an infinite-dimensional nonquasi-compact scheme (cf. \cite[$\S$4]{Ka} for the case of $P=B$; the case of general $P$ is similar) and $X_P$ is an ind-projective variety \cite[\S 7.1]{K}. The group $\Gm$ (in particular, the maximal torus $H$) acts on $X_P$ and $\bar{X}_P$. Let $T$ be the quotient $H/Z(\Gm)$, where $Z(\Gm)$ is the center of $\Gm$. Then the action of $H$ on $\bar{X}_P$ (and $X_P$) descends to an action of $T$. 

Let $W^P$ denote the set of minimal length coset representatives in the quotient $W/W_P$,  where $W_P\subset W$ is the Weyl group of $P$. Then, for any $w\in W^P$,  we have the Schubert cell 
\begin{equation*}C_w^P : = BwP/P\subset X_P,
\end{equation*}
the Schubert variety \begin{equation*}X_w^P: = \overline{C_w^P} = \bigsqcup_{\substack{u\leq w\\u\in W^P}} C_u^P\subset X_P,
\end{equation*} the opposite Schubert cell \begin{equation*}C^w_P : = B^-wP/P\subset \bar{X}_P,
\end{equation*} and the opposite Schubert variety \begin{equation*}X^w_P:=\overline{C^w_P} = \bigsqcup_{\substack{u\geq w\\u\in W^P}} C^u_P \subset \bar{X}_P,
\end{equation*} all endowed with the reduced subscheme structures. When $P=B$, we drop the qualification $P$, thus $\bar{X}=\bar{X}_B, X_w= X_w^B$ etc. Then, $X_w^P$ is a (finite-dimensional) irreducible projective subvariety of $X_P$ and $X^w_P$ is a finite-codimensional irreducible subscheme of $\bar{X}_P$. We recall the well-known result that for $v<w \in W^P$, there exists a chain $v=v_0< v_1 < \dots <v_n=w$ in $W^P$ such that $l(v_{i+1})=l(v_i)+1$ for all $0\leq i\leq n-1$ (see, e.g., Theorem 2.5.5 in {\it Combinatorics of Coxeter Groups} by A. Bj\"orner and F. Brenti as pointed out by R. Proctor).
We denote by $Z_w$ the Bott-Samelson-Demazure-Hansen (BSDH) variety as in \cite[\S 7.1]{K}, which is a $B$-equivariant desingularization of $X_w^P$ \cite[Proposition 7.1.15]{K}. Further, $X_w^P$ is normal and has rational singularities 
(in particular, Cohen-Macaulay)
\cite[Theorem 8.2.2]{K}. \\
We also define the boundary of the Schubert variety by \begin{equation*}\partial X_w^P := X_w^P\ \backslash\  C_w^P
\end{equation*} with the reduced subscheme structure. For any $u,w\in W^P$ with $u\leq w$, we have the Richardson variety \begin{equation*}X_w^u(P):=X^u_P\cap X_w^P\subset X_P
\end{equation*} 
endowed with the reduced subvariety structure. By \cite[Proof of Proposition 5.3]{KuS} together with \cite[Lemma 1.1.8]{BK}, $X_P$ is Frobenius split in any characteristic $p>0$ compatibly splitting $\{X_w^P, X^w_P\cap X_P\}_{w\in W^P}$. In particular, any scheme-theoretic intersection 
$ X^P_{w_1} \cap \dots \cap X^P_{w_m}\cap X_P^{v_1}\cap \dots \cap X_P^{v_n}$ (for $m \geq 1$)  is  {\it reduced}. 

We denote by $Z_w^u$ the $T$-equivariant desingularization of $X_w^u$ as in \cite[Theorem 6.8]{Ku}. 
For $u,w \in W^P$, the canonical projection map $X^u_w\to X^u_w(P)$ is a proper birational morphism, where $C^u\cap C_w$ maps isomorphically onto a $T$-stable open subset of  $C^u_P\cap C^P_w$ (cf. Lemma \ref{rich:props} and {\cite[Lemma 4.3]{Deo}).
 Hence, $Z^u_w$ is a $T$-equivariant desingularization of $X^u_w(P)$. 

We denote the representation ring of $T$ by $R(T)$. Let $\{\alpha_1,\cdots, \alpha_r\}\subset\mf{h}^*$ be the set of simple roots, $\{\alpha_1^\vee,\cdots, \alpha_r^\vee\}\subset \mf{h}$ the set of simple coroots, and $\{s_1,\cdots, s_r\}\subset W$ the corresponding set of simple reflections, where $\mf{h} := \text{Lie}(H)$. Let $\rho\in\mf{h}^*$ be any integral weight satisfying \begin{equation*}\rho(\alpha_i^\vee)=1,\ \text{for all }\ 1\leq i\leq r.
\end{equation*}

 If $G$ is a finite-dimensional semisimple group, $\rho$ is unique, but for a general KM group $G$, it may not be unique. Denote by $\rho_Y:=\sum_{i\in Y} \varpi_i$, where $\{\varpi_1,\cdots,\varpi_r\}$ are fixed fundamental weights and $Y\subset\{1,\cdots, r\}$ corresponds to the Levi component of $P$, i.e., $\{\alpha_i\}_{i\in Y}$ are the simple roots of the Levi component $L$ of $P$ containing $H$. (Observe that, in general,  $\varpi_i$ are not unique.) 
  We then define the integral weight $\hat{\rho}_Y\in\mf{h}^*$ by \begin{equation*}\hat{\rho}_Y:= \rho-\rho_Y.
\end{equation*} 

For any integral weight $\lambda$, let $\mathbb{C}_\lambda$ denote the one-dimensional representation of $H$ on $\mathbb{C}$ given by $h\cdot z = \lambda(h)z$ for $h\in H$ and $z\in\bb{C}$. This uniquely extends to a representation of $B$. We call $\lambda$ a {\it $P$-weight} if 
$\dot{\lambda}(\alpha_i^\vee)=0$ for all $i\in Y$, where $\dot{\lambda}$ is the derivative of $\lambda$.  If $\lambda$ is a $P$-weight, this action extends uniquely to a representation  of $P$ and we define the $G$-equivariant line bundle $\mathcal{L}^P(\lambda)$ on $\bar{X}_P$ by \begin{equation*} \mathcal{L}^P(\lambda):= G\times^P\bb{C}_{-\lambda},
\end{equation*} where for any representation $V$ of $P$, $G\times^P V := (G\times V)/P$ where $P$ acts on $G\times V$ by $(g,v)\cdot p = (gp,p^{-1}v)$ for $g\in G,\ v\in V$ and $p\in P$. We also define the line bundle \begin{equation*}e^\lambda:=\bar{X}_P\times \bb{C}_\lambda,
\end{equation*} which while trivial as a non-equivariant line bundle, is equivariantly non-trivial with the diagonal action of $H$.

\section{Identification of the dual of the structure sheaf basis}

\begin{defn} For a quasi-compact scheme $Y$, an $\mathcal{O}_Y$-module $\mathcal{S}$ is called \emph{coherent} if it is finitely presented as an $\mathcal{O}_Y$-module and any $\ss{Y}$-submodule of finite type admits a finite presentation. 
	
	A subset $S\subset W^P$ is called an \emph{ideal} if $x\in S$ and $v\leq x$ imply $v\in S$. An $\ss{\bar{X}_P}$-module $\mathcal{S}$ is called \emph{coherent} if $\mathcal{S}|_{V^S}$ is a coherent $\ss{V^S}$-module for any finite ideal $S\subset W^P$, where $V^S$ is the quasi-compact open subset of $\bar{X}_P$ defined by \begin{equation*}V^S := \bigcup_{w\in S} B^-wP/P = \bigcup_{w\in S} wU^-P/P,
	\end{equation*} where $U^-$ is the unipotent part of $B^-$. Observe that $\{ wU^-P/P\}_{w\in W^P}$ is an open cover of $\bar{X}_P$. 	Let $K^0_T(\bar{X}_P)$ denote the Grothendieck group of $T$-equivariant coherent $\ss{\bar{X}_P}$-modules $\mathcal{S}$. Since the coherence condition on $\mathcal{S}$ is imposed only for $\mathcal{S}|_{V^S}$ for finite ideals $S$, $K^0_T(\bar{X}_P)$ can be thought of as the inverse limit of $K^0_T(V^S)$ as $S$ varies over all finite ideals of $W^P$ \cite[\S 2]{KS}.
	
	Similarly, define $K_0^T(X_P):=\text{Lim}_{n\to\infty} K_0^T(X^P_n)$,  where $\{X^P_n\}_{n\geq 1}$ is the filtration of $X_P$ giving $X_P$ its ind-projective variety structure (i.e.,  $X^P_n = \bigcup_{l(w)\leq n,w\in W^P} BwP/P$) and $K_0^T(X^P_n)$ is the Grothendieck group of $T$-equivariant coherent sheaves on the projective variety $X_n^P$. 
	\end{defn}

\noindent For any $w\in W^P$,  we denote the class of  the $T$-equivariant coherent sheaf $\ss{X_w^P}$ by
\begin{equation*} [\ss{X_w^P}]\in K_0^T(X_P).
\end{equation*}
\begin{lemma} $\{[\ss{X_w^P}]\}_{w\in W^P}$ forms a basis of $K_0^T(X_P)$ as an $R(T)$-module. 
	\end{lemma}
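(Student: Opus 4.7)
The plan is to reduce the basis statement for the ind-variety $X_P$ to a basis statement for each finite-dimensional projective piece $X_n^P$ in its filtration, and argue the latter by induction on $n$. Since $K_0^T(X_P)$ is defined as the direct limit $\mathrm{Lim}_n\, K_0^T(X_n^P)$ along pushforwards by the closed inclusions $X_n^P \hookrightarrow X_{n+1}^P$, and since $[\ss{X_w^P}]$ for $l(w)\leq n$ pushes forward to the class of the same name at the next stage, it suffices to show that for every $n\geq 0$, the classes $\{[\ss{X_w^P}]\}_{w\in W^P,\, l(w)\leq n}$ form a free $R(T)$-basis of $K_0^T(X_n^P)$.

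The base case $n=0$ is trivial since $X_0^P = \{eP\}$ and $K_0^T(\mathrm{pt}) = R(T)$. For the inductive step, set $U_n := X_n^P \setminus X_{n-1}^P = \bigsqcup_{w\in W^P,\, l(w)=n} C_w^P$. Each Bruhat cell $C_w^P$ is $T$-equivariantly isomorphic to a linear affine space $\mathbb{A}^{l(w)}$, so by homotopy invariance $K_0^T(C_w^P) = R(T)\cdot [\ss{C_w^P}]$. Quillen's localization sequence gives
\[K_0^T(X_{n-1}^P) \xrightarrow{i_*} K_0^T(X_n^P) \xrightarrow{j^*} \bigoplus_{l(w)=n} R(T)\cdot [\ss{C_w^P}] \to 0,\]
and the classes $[\ss{X_w^P}]$ with $l(w)=n$ provide a section of $j^*$ since $j^*[\ss{X_w^P}] = [\ss{C_w^P}]$. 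Combining this splitting with the inductive hypothesis on $X_{n-1}^P$, the classes $[\ss{X_w^P}]$ for $l(w)\leq n$ generate $K_0^T(X_n^P)$ as an $R(T)$-module.

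The main obstacle will be $R(T)$-linear independence of these classes, equivalently the injectivity of $i_*$. The cleanest route is equivariant fixed-point localization: the $T$-fixed locus $(X_n^P)^T = \{vP : v\in W^P,\ l(v)\leq n\}$ is finite, and Thomason's localization theorem for equivariant $K$-theory implies that, after inverting a suitable multiplicative subset of $R(T)$, restriction to the fixed locus becomes an isomorphism on localized equivariant $K$-groups. One then checks that for $v,w\in W^P$ the restriction of $[\ss{X_w^P}]$ to $vP$ vanishes when $v\not\leq w$ (since then $vP \notin X_w^P$), and at $v=w$ is invertible in the localized ring (computed in the smooth chart $C_w^P \cong \mathbb{A}^{l(w)}$ containing $wP$, or via pullback through the BSDH resolution $Z_w \to X_w^P$). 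The resulting matrix of fixed-point restrictions is triangular with invertible diagonal, so the $[\ss{X_w^P}]$ are already $R(T)$-linearly independent in $K_0^T(X_n^P)$ before localization.

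Everything except linear independence (reduction to finite stages, inductive excision, generation) is formal; the only non-formal ingredient is the triangularity argument via fixed-point localization, which is where all the geometry of Bruhat order enters.
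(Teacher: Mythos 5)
Your argument is correct in outline, but it is a genuinely different route from the paper, whose entire proof is a one-line appeal to the cellular fibration lemma of Chriss--Ginzburg (\S 5.2.14 and Theorem 5.4.17 of \cite{CG}), applied to the filtration of $X_P$ by the $X_n^P$ with affine-space cells $C_w^P$. What you do instead is re-prove that lemma in this special case: the reduction to finite stages, the equivariant localization sequence, homotopy invariance on the cells, and the splitting via $j^*[\ss{X_w^P}]=[\ss{C_w^P}]$ give generation exactly as in the cited result, while for linear independence you bring in a different ingredient, namely Thomason's fixed-point localization and the triangularity of the matrix of restrictions to the finitely many $T$-fixed points $vP$. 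That buys a self-contained and transparent proof in which the role of the Bruhat order is explicit, at the cost of one technical point you should make precise: since $X_n^P$ is singular, there is no pullback in $G$-theory along the closed embedding $\{vP\}\hookrightarrow X_n^P$, so ``restriction to the fixed point'' must be defined either by inverting the pushforward isomorphism from the fixed locus furnished by the concentration theorem after localizing $R(T)$, or by embedding $X_n^P$ $T$-equivariantly in a smooth ambient variety (e.g.\ a projective space of a $T$-module) and restricting there; with either device the vanishing for $v\not\leq w$ (support) and the invertibility at $v=w$ (computed at the smooth point $wP\in C_w^P$) go through, and injectivity of $R(T)$ into its localization then yields independence over $R(T)$. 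Also note that your appeal to the BSDH resolution for the diagonal entry is unnecessary and slightly delicate (it would require comparing $\pi_*[\ss{Z_w}]$ with fiberwise data); the computation at the smooth point $wP$ suffices. The paper's citation is shorter and covers the same content; your argument is longer but makes the mechanism visible.
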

\begin{proof} Apply \cite[\S 5.2.14 and Theorem 5.4.17]{CG}.
	\end{proof}

For any $u\in W^P$, $\ss{X_P^u}$ is a coherent $\ss{\bar{X}_P}$-module. In particular, $\ss{\bar{X}_P}$ is a coherent $\ss{\bar{X}_P}$-module. We record the following lemma due to Kashiwara-Shimozono \cite[Proof of Lemma 8.1]{KS}. 

\begin{lemma}\label{KS:res} Any $T$-equivariant coherent sheaf $\mathcal{S}$ on $V^u$ admits a finite free resolution in $\textup{Coh}_T(\ss{V^u})$ (for some $n>0$ depending upon $\mathcal{S}$): \begin{equation*}0\to S_n\otimes \ss{V^u}\to \cdots \to S_1\otimes \ss{V^u}\to S_0\otimes\ss{V^u}\to \mathcal{S}\to 0,
	\end{equation*} where $S_k$ are finite-dimensional $T$-modules, $V^u:=uU^-P/P\subset \bar{X}_P$ is quasi-compact, and $\textup{Coh}_T(\ss{V^u})$ is the abelian category of $T$-equivariant coherent $\ss{V^u}$-modules. 
		\end{lemma}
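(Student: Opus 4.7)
The plan is to convert the statement about sheaves on the affine open $V^u$ into a statement about modules over its coordinate ring. Since $V^u = uU^-P/P$ is a $T$-stable affine open subset of $\bar{X}_P$, translation by $u^{-1}$ (with the $T$-action suitably twisted by $\mathrm{Ad}(u)$) identifies it $T$-equivariantly with $U^-P/P$. The latter is affine with coordinate ring $A$ a polynomial algebra on $T$-weight vectors whose weights all lie on one side of a hyperplane in the character lattice of $T$; in particular $A=\bigoplus_\lambda A_\lambda$ with each $A_\lambda$ finite-dimensional, $A_0=\bb{C}$, and the augmentation ideal $\mathfrak{m}$ cutting out the unique $T$-fixed point of $V^u$ is maximal.

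Passing to global sections on the affine $V^u$, a $T$-equivariant coherent sheaf $\mathcal{S}$ corresponds to a finitely presented $T$-equivariant $A$-module $M:=\Gamma(V^u,\mathcal{S})$ which decomposes into finite-dimensional $T$-weight spaces. I would build the resolution inductively. Using a graded Nakayama lemma argument, choose a finite-dimensional $T$-submodule $S_0 \subset M$ whose image in $M/\mathfrak{m}M$ spans; this yields a $T$-equivariant surjection $S_0 \otimes A \twoheadrightarrow M$. Let $K_1$ be the kernel. Because $M$ is finitely presented and $S_0 \otimes A$ is coherent, $K_1$ is finitely generated (and itself coherent), so a finite-dimensional $T$-submodule $S_1\subset K_1$ surjects onto $K_1/\mathfrak{m}K_1$ and provides the next step. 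Iterating yields a (a priori infinite) complex $\cdots \to S_1\otimes A \to S_0\otimes A \to M \to 0$ exact in positive degrees.

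The main obstacle is termination, i.e., producing an $n$ with $S_n=0$. My plan is to reduce to a finite-dimensional syzygy problem. Since $M$ is finitely presented, both its finitely many generators and the finitely many relations among them involve only finitely many $T$-weights, and hence only finitely many of the root-subgroup coordinates generating $A$. Consequently there is a $T$-stable polynomial subalgebra $A'\subset A$ of finite Krull dimension over which $M$ descends to a finitely presented graded module $M'$ with $M\cong M'\otimes_{A'}A$. Hilbert's syzygy theorem applied to the regular ring $A'$ yields a finite free resolution of $M'$ by modules of the form $S_k\otimes_{\bb{C}}A'$ with each $S_k$ a finite-dimensional $T$-module; tensoring this resolution with $A$, which is itself a polynomial algebra over $A'$ and hence free (in particular flat) over $A'$, yields the desired finite free resolution of $M$. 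This reduction from the (possibly infinite-dimensional) setting of $A$ to the finite-dimensional polynomial subalgebra $A'$ is the delicate step and is essentially what is carried out in \cite[Proof of Lemma 8.1]{KS}.
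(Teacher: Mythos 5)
Your argument is correct, and it is essentially the same approach as the paper's: the paper gives no independent proof but simply records the lemma by citing the proof of Lemma 8.1 of Kashiwara--Shimozono, whose argument (reduction to the affine coordinate ring of the big cell, equivariant finite presentation, descent to a finitely generated $T$-stable polynomial subalgebra, graded Hilbert syzygy, and flat base change back up) is exactly what you reconstruct.
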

	
	We define a pairing 
	\begin{equation}\label{pair}\id{\ ,\ }:K^0_T(\bar{X}_P)\otimes K_0^T(X_P)\to R(T),\hspace{1cm} \id{[\mathcal{S}],[\mathcal{F}]} = \sum_i (-1)^i\chi_T(X^P_n,\stor_i^{\ss{\bar{X}_P}}(\mathcal{S},\mathcal{F})),
	\end{equation}
 where $\mathcal{S}$ is a $T$-equivariant coherent sheaf on $\bar{X}_P$ and $\mathcal{F}$ is a $T$-equivariant coherent sheaf on $X_P$ supported on $X_n^P$ for some $n$, and where $\chi_T$ is the $T$-equivariant Euler-Poincar\'e characteristic. This pairing is well-defined by \cite[Proof of Lemma 3.5]{Ku}. 
 
 \begin{defn} For any $u\in W^P$, define the $T$-equivariant sheaf on $\bar{X}_P$,
 	\begin{equation*}\xi^u_P:=\idshf{X^u_P}{\partial X^u_P}, \,\,\text{where $\partial X^u_P:= X^u_P\setminus C^u_P$ with the reduced sub-scheme structure.}
 	\end{equation*}
 	\end{defn}
 
 \begin{proposition} $\{[\xi^u_P]\}_{u\in W^P}$ forms an infinite  basis of $K^0_T(\bar{X}_P)$ as an $R(T)$-module, where by {\rm infinite basis} we mean that $K^0_T(\bar{X}_P) = \prod_{u\in W^P}\, R(T) [\xi^u_P] $. 
 	\end{proposition}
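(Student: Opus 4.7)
The plan is to adapt, mutatis mutandis, the strategy Kumar uses for the full flag variety in \cite{Ku}, with three ingredients: (i) an orthonormality relation between $\{[\xi^u_P]\}_{u\in W^P}$ and the Schubert classes $\{[\ss{X_w^P}]\}_{w\in W^P}$ under the pairing \cref{pair}; (ii) generation of each $K^0_T(V^S)$ for a finite ideal $S \subset W^P$; (iii) assembly via the inverse limit $K^0_T(\bar{X}_P) = \varprojlim_S K^0_T(V^S)$.

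First I would establish
\[\id{[\xi^u_P],[\ss{X_w^P}]} = \delta_{u,w}, \qquad u,w \in W^P.\]
If $u \not\leq w$, the supports $X^u_P$ and $X_w^P$ are disjoint, so all Tor's vanish. If $u=w$, then $X^w_P \cap X_w^P = \{wP/P\}$ is reduced, $\partial X^w_P \cap X_w^P = \emptyset$, and at this $T$-fixed point the tangent spaces of $X_w^P$ and $X^w_P$ are complementary in the tangent space of $\bar{X}_P$, so the intersection is transverse; all higher Tor's vanish and $\chi_T(\ss{wP/P})=1$. For $u<w$, the intersection $X^u_w(P)=X^u_P\cap X_w^P$ is a Richardson variety of positive dimension; pulling back along the $T$-equivariant desingularization $\pi:Z^u_w\to X^u_w(P)$ of \cite[Theorem 6.8]{Ku} and running a Grauert--Riemenschneider / Kawamata--Viehweg style cohomology vanishing on the BSDH tower (exactly as in \cite{Ku} for $G/B$) yields
\[\chi_T\bigl(X^u_w(P),\,\ss{X^u_w(P)}(-X^u_w(P)\cap\partial X^u_P)\bigr)=0,\]
so the pairing vanishes. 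Reducedness of all relevant scheme-theoretic intersections is guaranteed by the Frobenius splitting recalled in Section \ref{notation}.

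Next I would show, by induction on $|S|$, that for every finite ideal $S \subset W^P$ the classes $\{[\xi^u_P|_{V^S}]\}_{u\in S}$ freely generate $K^0_T(V^S)$ as an $R(T)$-module. The base case $S=\{e\}$ is immediate from Lemma \ref{KS:res}: every coherent $T$-equivariant sheaf on $V^e$ admits a finite free resolution by $T$-twists of $\ss{V^e}$, and $\xi^e_P|_{V^e}=\ss{V^e}$ because $\partial X^e_P\cap V^e=\emptyset$. For the inductive step, pick a maximal $u_0\in S$ and set $S':=S\setminus\{u_0\}$. Maximality of $u_0$ forces $X^{u_0}_P\cap V^S=C^{u_0}_P$, so $C^{u_0}_P\subset V^S$ is closed with open complement $V^{S'}$. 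The localization sequence
\[K^0_T(C^{u_0}_P) \xrightarrow{i_*} K^0_T(V^S) \xrightarrow{j^*} K^0_T(V^{S'}) \to 0,\]
combined with $K^0_T(C^{u_0}_P)=R(T)\cdot[\ss{C^{u_0}_P}]$ (Lemma \ref{KS:res} applied to the affine cell) and the identity $i_*[\ss{C^{u_0}_P}]=[\xi^{u_0}_P|_{V^S}]$ (since $\partial X^{u_0}_P\cap V^S=\emptyset$), reduces to the inductive hypothesis on $V^{S'}$.

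Finally, given $[\mathcal{S}]\in K^0_T(\bar{X}_P)$, set $c_u:=\id{[\mathcal{S}],[\ss{X_u^P}]}\in R(T)$ for all $u\in W^P$. Writing $S_w:=\{v\in W^P : v\leq w\}$, one checks $X_w^P\subset V^{S_w}$, so the pairing $\id{\,\cdot\,,[\ss{X_w^P}]}$ factors through the restriction $K^0_T(\bar{X}_P)\to K^0_T(V^{S_w})$. Then for any finite ideal $S$ containing $w$ (automatically $S\supseteq S_w$), Step 2 produces a unique expansion $[\mathcal{S}|_{V^S}]=\sum_{u\in S}a_u^S[\xi^u_P|_{V^S}]$, and pairing with $[\ss{X_w^P}]$ together with Step 1 forces $a_w^S=c_w$. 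These expansions are compatible under restriction, and the inverse-limit description of $K^0_T(\bar{X}_P)$ then yields $[\mathcal{S}]=\sum_{u\in W^P}c_u[\xi^u_P]\in\prod_{u\in W^P}R(T)\cdot[\xi^u_P]$, uniquely. The hard part will be Step 1 for $u<w$: the dualizing sheaf of $X^w_P\subset\bar{X}_P$ is less tractable in the parabolic case (as flagged in the introduction and treated in the appendix), so the ideal-sheaf cohomology vanishing on the Richardson $X^u_w(P)$ must be lifted to the BSDH desingularization $Z^u_w$ where standard vanishing theorems apply, then pushed back down using rational singularities.
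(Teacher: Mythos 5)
Your route is viable in outline, but it is a genuinely different---and much heavier---path than the paper's. The paper's argument is essentially two lines: by \cite[\S 2]{KS} the structure sheaves $\{[\ss{X^v_P}]\}_{v\in W^P}$ already form an infinite basis of $K^0_T(\bar{X}_P)$ in the product sense, and the exact sequence $0\to\xi^u_P\to\ss{X^u_P}\to\ss{\partial X^u_P}\to 0$ gives $[\xi^u_P]=[\ss{X^u_P}]+\sum_{w'>u,\,w'\in W^P}r_{w'}[\ss{X^{w'}_P}]$, a unitriangular (for the Bruhat order) change of basis, which is invertible on $\prod_u R(T)$; hence $\{[\xi^u_P]\}$ is also an infinite basis. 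Your proposal bypasses this observation and instead re-derives the content of \cite[\S 2]{KS} directly for the $\xi$-classes (induction over finite ideals via localization, then the inverse limit), and in addition imports the duality $\id{[\xi^u_P],[\ss{X_w^P}]}=\delta_{u,w}$, which is \Cref{duality} of the paper. That creates no circularity (the proof of \Cref{duality} does not use the basis statement), but you are invoking a later and harder result to prove an earlier and easier one.

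The concrete soft spots in your sketch are where it treats as routine exactly what the citation to \cite{KS} is doing: (i) the exactness of $K^0_T(C^{u_0}_P)\to K^0_T(V^S)\to K^0_T(V^{S'})\to 0$, and in particular the extension of a $T$-equivariant coherent sheaf from the open subset $V^{S'}$ to $V^S$, is not automatic for these non-Noetherian infinite-type schemes with the coherence notion of Definition 3.1; (ii) $K^0_T(C^{u_0}_P)=R(T)\cdot[\ss{C^{u_0}_P}]$ does not follow from \Cref{KS:res}, which is stated for the open sets $V^u=uU^-P/P$, not for the infinite-dimensional cell $C^{u_0}_P$ (a finite-codimension closed subscheme of $V^{u_0}$), so a separate argument is needed; (iii) your Step 2 as stated yields only generation, so the asserted freeness in fact rests on the orthogonality of Step 1, which should be made explicit; and (iv) for $u<w$, Kawamata--Viehweg-type vanishing alone does not give $\chi_T=0$ for the ideal sheaf --- the paper instead uses $H^{>0}$-vanishing for both $\ss{X^u_w(P)}$ and $\ss{(\partial X^u_P)\cap X_w^P}$ (\Cref{prop:finunion}, \Cref{rich:props}) together with irreducibility/connectedness, so each Euler characteristic equals $1$ and they cancel. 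None of these gaps is fatal, but each is a genuine step; the unitriangularity observation makes all of them unnecessary for this particular proposition.
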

\begin{proof} By the same proof as in \cite[$\S$2]{KS}  $\{[\ss{X_P^v}]\}_{v\in W^P}$ is an infinite  basis of $K^0_T(\bar{X}_P)$. Moreover,  $\ds [\xi^u_P]=[\ss{X^u_P}]+\sum_{\substack{w'>u\\ w'\in W^P}} r_{w'}[\ss{X^{w'}_P}]$ for some $r_{w'}\in R(T)$, the proposition follows.
	\end{proof}

\begin{proposition}\label{prop:finunion} For any finite union $Y=\bigcup_{i=1}^k X^{v_i}_P$ of opposite Schubert varieties and any  $w$ in $W^P$,
	\begin{enumerate}\item[(a)] $\stor_j^{\ss{\bar{X}_P}}(\ss{Y},\ss{X_w^P}) =0$ for all $j>0$.
		\item[(b)] $H^j(X_n^P,\ss{Y\cap X_w^P})=0$ for all $j>0$, where $n$ is any positive integer such that $X_n^P\supset X_w^P$. 
	\end{enumerate}
	In particular, this applies to the case $Y=\partial X^u_P$.
\end{proposition}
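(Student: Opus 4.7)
The plan is to prove (a) and (b) simultaneously by strong induction on $\dim(Y\cap X_w^P)$, with a secondary induction on the number $k$ of opposite Schubert varieties in the union $Y$; the base case is $Y=X^v_P$, a single opposite Schubert variety. The essential ingredients, all either stated in the excerpt or drawn from \cite{Ku}, are Frobenius splitting of $X_P$ compatibly splitting every relevant intersection (which forces reducedness of all scheme-theoretic intersections appearing below); Cohen-Macaulayness of $X_w^P$, $X^v_P$, and the Richardson varieties $X^v_w(P)$; and the $T$-equivariant desingularization $\pi\colon Z^v_w\to X^v_w(P)$ together with rational singularities of $X^v_w(P)$.

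\emph{Base case $Y=X^v_P$.} For (b), rational singularities give $R\pi_\ast\mathcal{O}_{Z^v_w}=\mathcal{O}_{X^v_w(P)}$, so $H^j(X_n^P,\mathcal{O}_{X^v_w(P)})=H^j(Z^v_w,\mathcal{O}_{Z^v_w})$ for all $j$, and the latter vanishes for $j>0$ by the standard Bott-Samelson tower / Kempf vanishing argument applied to $Z^v_w$. For (a), localize to the smooth quasi-compact open $V^S\subset\bar{X}_P$ for a finite ideal $S\supset\{u\le w\}$; there $X^v_P|_{V^S}$ and $X_w^P$ are Cohen-Macaulay with $\mathrm{codim}_{V^S}(X^v_P)=l(v)=\mathrm{codim}_{X_w^P}(X^v_w(P))$, so the intersection is proper and the intersection scheme $X^v_w(P)$ is itself Cohen-Macaulay. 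Serre's criterion for proper intersections of Cohen-Macaulay sheaves in a regular ambient then forces $\stor_j^{\mathcal{O}_{\bar{X}_P}}(\mathcal{O}_{X^v_P},\mathcal{O}_{X_w^P})=0$ for $j>0$.

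\emph{Inductive step.} Set $Y'=\bigcup_{i=1}^{k-1}X^{v_i}_P$. Frobenius-split reducedness of $Y$ and of $Y'\cap X^{v_k}_P$ yields the Mayer-Vietoris short exact sequence
\begin{equation*}
0\to\mathcal{O}_Y\to\mathcal{O}_{Y'}\oplus\mathcal{O}_{X^{v_k}_P}\to\mathcal{O}_{Y'\cap X^{v_k}_P}\to 0.
\end{equation*}
Applying $\stor_\bullet(-,\mathcal{O}_{X_w^P})$ and $R\Gamma(X_n^P,-)$ to this sequence produces long exact sequences reducing (a) and (b) for $Y$ to the same statements for $Y'$, $X^{v_k}_P$, and $Y'\cap X^{v_k}_P$. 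The first two fall under the secondary induction on $k$. For the third, Frobenius splitting shows that $Y'\cap X^{v_k}_P=\bigcup_{i<k}(X^{v_i}_P\cap X^{v_k}_P)$ is again a finite union of opposite Schubert varieties $X^u_P$ (namely, its irreducible components meeting $X_w^P$; finiteness from $\dim X_w^P<\infty$), and each such $u$ strictly dominates both $v_i$ and $v_k$ in Bruhat order, so $\dim(X^u_P\cap X_w^P)=l(w)-l(u)<\dim(Y\cap X_w^P)$ and the primary induction applies.

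\emph{Main obstacle.} The delicate step is the Tor vanishing in the base case: because $\bar{X}_P$ is not of finite type and $X_w^P$ has infinite codimension in $\bar{X}_P$, Serre's intersection theorem is not directly available, and the computation must be transferred to the smooth quasi-compact open $V^S$, where Lemma~\ref{KS:res} lets one replace one of the factors by a finite free $\mathcal{O}_{V^S}$-resolution and reduce to a classical regular-local-ring computation on the relevant points of $X^v_w(P)$. A secondary subtlety is verifying that the scheme-theoretic union $Y$ appearing in the Mayer-Vietoris sequence agrees with its reduction, so that the first map is well-defined and injective; this is again exactly what compatible Frobenius splitting provides.
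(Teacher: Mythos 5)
Your overall architecture---induction on the number of components via the Mayer--Vietoris sequence $0\to\mathcal{O}_Y\to\mathcal{O}_{Y'}\oplus\mathcal{O}_{X^{v_k}_P}\to\mathcal{O}_{Y'\cap X^{v_k}_P}\to 0$, with a single opposite Schubert variety as base case, using Frobenius-split reducedness, the CM property and the proper-intersection count for the Tor vanishing---is essentially the argument of \cite[Proof of Corollary 5.7]{Ku} that the paper simply invokes. The genuine gap is in your base case for (b): you assert $H^j(Z^v_w,\mathcal{O}_{Z^v_w})=0$ for $j>0$ ``by the standard Bott--Samelson tower / Kempf vanishing argument.'' But $Z^v_w$ is the desingularization of a Richardson variety from \cite[Theorem 6.8]{Ku}; it is not a BSDH tower, and no standard Kempf-type argument applies to it. Worse, once you have rational singularities of $X^v_w(P)$ (\Cref{rich:props}), the vanishing $H^{>0}(Z^v_w,\mathcal{O}_{Z^v_w})=0$ is, via Leray for $Z^v_w\to X^v_w(P)$, \emph{equivalent} to the vanishing $H^{>0}(X_n^P,\mathcal{O}_{X^v_w(P)})=0$ you are trying to establish, so as written this step is unsupported and essentially circular. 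The paper's mechanism here---which, together with the irreducibility of $X^u_w(P)$, is exactly the new parabolic input it adds to \cite{Ku}---is different: it quotes the $G/B$ Richardson vanishing \cite[Corollary 3.2]{KuS} and transfers it to $G/P$ through the Leray spectral sequence of the fibration $\pi:\bar{X}\to\bar{X}_P$ with fiber $P/B$, using $\pi^{-1}(X^v_w(P))=X^v_{ww_o^P}$. You must either cite such a result or give an honest proof of the vanishing on $Z^v_w$; calling it standard does not close the argument.

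A secondary issue sits in your inductive step: you need the \emph{scheme-theoretic} intersection $Y'\cap X^{v_k}_P$ inside the thick flag variety (with no finite Schubert factor present) to be reduced, but the Frobenius-splitting statement recorded in Section 2 is a splitting of the ind-variety $X_P$ and only gives reducedness of intersections containing at least one $X^P_{w_i}$ (the condition $m\geq 1$). Since the Tor against $\mathcal{O}_{X_w^P}$ and the cohomology in (b) only see what happens along $X_w^P$, this can be repaired by running the induction after intersecting everything with $X_w^P$, so that all intersections are of the allowed form and decompose into the (finitely many, by finiteness of the interval below $w$) Richardson varieties $X^z_w(P)$; alternatively one needs a separate reducedness statement for intersections of opposite Schubert varieties in $\bar{X}_P$. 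Your base case for (a) (CM plus proper intersection forces Tor vanishing, carried out on $V^S$ via \Cref{KS:res}) is in the right spirit of the $G/B$ argument being imported, and you correctly flag the non-Noetherian subtlety, but that reduction to a regular local computation is precisely the content that still has to be written out rather than invoked.
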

\begin{proof} This follows by the same argument as \cite[Proof of Corollary 5.7]{Ku}. Observe that $X^u_w(P)$ is irreducible by Lemma 
\ref{rich:props}.	Moreover, the vanishing $H^j(X_n^P, \ss{X_w^v(P)})=0$, for all $j>0$, follows from the corresponding result for $\bar{X}$ 
(cf. \cite[Corollary 3.2]{KuS})
and applying the Leray spectral sequence for the bundle $\pi:\bar{X} \to \bar{X}_P$, since $\pi^{-1}(X^v_w(P)) = X^v_{ww_o^P}$, where 
$w_o^P$ is the longest element of $W_P$. 
\end{proof}

\begin{proposition}\label{prop:torvan} For any $v, w\in W^P$, 
	\[\stor_j^{\ss{\bar{X}_P}}(\xi^v_P,\ss{X_w^P}) = 0\hspace{5mm} \text{for all}\ j>0.\]
	\end{proposition}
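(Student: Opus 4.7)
The plan is to reduce the statement to the previous proposition (\cref{prop:finunion}) by using the defining short exact sequence of $\xi^v_P$ and the long exact $\stor$-sequence it induces.

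First, I would write down the short exact sequence of sheaves on $\bar{X}_P$
\begin{equation*}
0\to \xi^v_P \to \ss{X^v_P}\to \ss{\partial X^v_P}\to 0,
\end{equation*}
which comes directly from the definition $\xi^v_P = \idshf{X^v_P}{\partial X^v_P}$ (extending by zero to $\bar{X}_P$). Next, I would observe that
\begin{equation*}
\partial X^v_P = \bigcup_{\substack{u\gtrdot v\\ u\in W^P}} X^u_P,
\end{equation*}
where $u\gtrdot v$ denotes a cover in the Bruhat order on $W^P$. This is a \emph{finite} union because in a Coxeter group only finitely many elements have a given length, so only finitely many $u\in W^P$ cover $v$. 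Thus $\partial X^v_P$ is exactly the kind of finite union of opposite Schubert varieties to which \cref{prop:finunion} applies.

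Now I would apply \cref{prop:finunion}(a) twice: once with $Y = X^v_P$ (a single opposite Schubert variety) and once with $Y = \partial X^v_P$ (a finite union as above). This gives the two vanishing results
\begin{equation*}
\stor_j^{\ss{\bar{X}_P}}(\ss{X^v_P},\ss{X_w^P})=0 \quad\text{and}\quad \stor_j^{\ss{\bar{X}_P}}(\ss{\partial X^v_P},\ss{X_w^P})=0 \quad\text{for all } j>0.
\end{equation*}
Finally, applying the functor $-\otimes^{L}_{\ss{\bar{X}_P}}\ss{X_w^P}$ to the above short exact sequence yields a long exact sequence
\begin{equation*}
\cdots\to\stor_{j+1}^{\ss{\bar{X}_P}}(\ss{\partial X^v_P},\ss{X_w^P})\to\stor_j^{\ss{\bar{X}_P}}(\xi^v_P,\ss{X_w^P})\to\stor_j^{\ss{\bar{X}_P}}(\ss{X^v_P},\ss{X_w^P})\to\cdots,
\end{equation*}
and for $j\geq 1$ both flanking terms vanish, forcing the middle term to vanish, which is what we want.

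The main (and essentially only) conceptual point is the reduction to \cref{prop:finunion}; once one notices that $\partial X^v_P$ is a finite union of opposite Schubert varieties $X^u_P$ indexed by covers of $v$ in $W^P$, everything else is formal from the long exact sequence of $\stor$. The only thing I would double-check carefully is the finiteness claim and the fact that each $X^u_P$ (for $u\gtrdot v$, $u\in W^P$) really is contained in $\partial X^v_P$, which follows from the Bruhat cell decomposition $X^v_P=\bigsqcup_{u\geq v,\,u\in W^P} C^u_P$ and the disjointness of the cells.
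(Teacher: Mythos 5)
Your proof is correct and is essentially the paper's own argument: the defining short exact sequence $0\to \xi^v_P\to\ss{X^v_P}\to \ss{\partial X^v_P}\to 0$, \cref{prop:finunion}(a) applied to $Y=X^v_P$ and $Y=\partial X^v_P$, and the long exact sequence for $\stor$. Your extra remark that $\partial X^v_P$ is a finite union of opposite Schubert varieties (indexed by covers of $v$ in $W^P$) is a useful justification of why \cref{prop:finunion} applies, a point the paper simply absorbs into the statement of that proposition.
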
 
\begin{proof} By Proposition \ref{prop:finunion}(a),
 we have the vanishing \[\stor_j^{\ss{\bar{X}_P}}(\ss{X^v_P},\ss{X_w^P}) = 0\hspace{5mm} \text{for all}\ j>0.\] The proposition follows from this together with \Cref{prop:finunion} part (a) applied to $Y=\partial X^v_P$ and the long exact sequence for $\stor$ associated to the short exact sequence \[0\to \xi^v_P\to\ss{X^v_P}\to \ss{\partial X^v_P}\to 0.\qedhere\]
	\end{proof}

\begin{proposition}\label{duality} For any $u, v\in W^P$, \begin{equation*}\id{[\xi^u_P],[\ss{X_v^P}]}=\delta_{u,v}.
	\end{equation*}
	\end{proposition}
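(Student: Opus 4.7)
The strategy is to reduce the pairing to a difference of Euler characteristics on the Richardson variety $X^u_v(P)$ and its boundary, then analyse the cases $u\not\leq v$, $u=v$, and $u<v$ separately.

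By \Cref{prop:torvan}, the higher Tor terms in \eqref{pair} vanish, so
\[\id{[\xi^u_P],[\ss{X_v^P}]}=\chi_T\bigl(X_n^P,\xi^u_P\otimes_{\ss{\bar X_P}}\ss{X_v^P}\bigr)\]
for $n\gg 0$ with $X_n^P\supset X_v^P$. I would next tensor the defining short exact sequence $0\to\xi^u_P\to \ss{X^u_P}\to \ss{\partial X^u_P}\to 0$ with $\ss{X_v^P}$; the result remains exact by \Cref{prop:finunion}(a), applied to the finite union $\partial X^u_P$ of opposite Schubert varieties. The Frobenius-splitting statement in \Cref{notation} then identifies the resulting sheaves with $\ss{X^u_v(P)}$ and $\ss{\partial X^u_P\cap X_v^P}$. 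Taking $T$-equivariant cohomology and using \Cref{prop:finunion}(b) to kill higher cohomology of both right-hand terms, the pairing reduces to
\[\chi_T\bigl(H^0(\ss{X^u_v(P)})\bigr)-\chi_T\bigl(H^0(\ss{\partial X^u_P\cap X_v^P})\bigr).\]

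If $u\not\leq v$ the Richardson variety is empty, so the pairing is $0$. If $u=v$, then $X^u_u(P)=\{uP/P\}$ is a single $T$-fixed point while $\partial X^u_P\cap X_u^P=\emptyset$, yielding $1-0=1$. The substantive case is $u<v$, where I would aim to show both Euler characteristics equal $1$ as the trivial $T$-module, so they cancel. Since $X^u_v(P)$ is irreducible, projective, and has rational singularities (via the $T$-equivariant desingularization $Z^u_v$), $H^0(\ss{X^u_v(P)})=\bb{C}$ with trivial $T$-character.

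The main obstacle is the matching identity $\chi_T(H^0(\ss{\partial X^u_P\cap X_v^P}))=1$ in the $u<v$ case; equivalently, that $\partial X^u_P\cap X_v^P$ is connected, so that the restriction map from $H^0(\ss{X^u_v(P)})$ is an isomorphism of trivial $T$-modules. My plan would be to exploit the Bott--Samelson-type desingularization $\pi\colon Z^u_v\to X^u_v(P)$: the scheme-theoretic preimage $\pi^{-1}(\partial X^u_P\cap X_v^P)$ is a $T$-stable simple-normal-crossing divisor whose irreducible components correspond to the simple reflections in a chosen reduced expression, and these components are glued along pairwise intersections via the iterated $\mathbb{P}^1$-bundle structure of $Z^u_v$, forming a connected subscheme. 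Its image under $\pi$ is then connected, and combining this with \Cref{prop:finunion}(b) yields $H^0(\ss{\partial X^u_P\cap X_v^P})=\bb{C}$, completing the cancellation.
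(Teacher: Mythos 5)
Your reduction to the difference of Euler characteristics, and the treatment of the cases $u\not\leq v$ and $u=v$, match the paper's argument exactly (same use of \Cref{prop:torvan}, the short exact sequence, and \Cref{prop:finunion}). The problem is precisely at the step you flag as the main obstacle: your proposed proof of connectedness of $\partial X^u_P\cap X_v^P$ does not work as stated. The preimage $\pi^{-1}(\partial X^u_P\cap X_v^P)$ in the desingularization $Z^u_v$ of the Richardson variety is not the boundary divisor of a BSDH variety; the description ``components correspond to the simple reflections in a chosen reduced expression, glued via the iterated $\mathbb{P}^1$-bundle structure'' is a property of $\partial Z_v\subset Z_v$ over $X_v$, whereas here the boundary also contains the pieces $X^{v'}_v(P)$ with $v'\gtrdot u$ coming from the opposite Schubert divisors of $X^u_P$, for which no such combinatorial SNC description is available from the construction of $Z^u_v$ in \cite{Ku}. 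Moreover, even if one knew the preimage were a simple normal crossing divisor, an SNC divisor can perfectly well be disconnected, so connectedness would still require an argument. As written, the crucial identity $\chi_T(X_n^P,\ss{(\partial X^u_P)\cap X_v^P})=1$ for $u<v$ is therefore not established.

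The fix is much simpler and is what the paper does: since $X^{v'}_P\subset X^u_P$ for $v'>u$, one has
\begin{equation*}
(\partial X^u_P)\cap X_v^P=\bigcup_{u<v'\leq v} X^{v'}_v(P),
\end{equation*}
and each Richardson variety $X^{v'}_v(P)$ is irreducible by \Cref{rich:props} and contains the $T$-fixed point $vP$ (because $v'\leq v$). Hence the union is connected as all its irreducible pieces share the common point $vP$, and combined with \Cref{prop:finunion}(b) and reducedness (from the Frobenius splitting noted in Section \ref{notation}) this gives $H^0=\bb{C}$ with trivial $T$-character, so the two Euler characteristics cancel for $u<v$. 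Replacing your desingularization argument by this observation makes your proof complete and essentially identical to the paper's.
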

\begin{proof} For any $u, w\in W^P$, the pairing is by definition 
	\[\id{[\xi^u_P],[\ss{X_w^P}]}=\sum_{i}(-1)^i\chi_T(X^P_n,\stor_i^{\ss{\bar{X}_P}}(\xi^u_P,\ss{X_w^P})),\]
	with $n$ taken such that $n\geq l(w)$. By \Cref{prop:torvan}, this becomes \begin{equation}\label{eulchar1}\id{[\xi^u_P],[\ss{X_w^P}]}=\chi_T(X_n^P, \xi^u_P\otimes_{\ss{\bar{X}_P}}\ss{X_w^P}).
		\end{equation} From \Cref{prop:finunion}(a) and the definition  $\xi^u_P := \idshf{X^u_P}{\partial X^u_P}$, we have the sheaf exact sequence 
	\[0\to \xi^u_P\otimes_{\ss{\bar{X}_P}}\ss{X_w^P}\to \ss{X^u_P}\otimes_{\ss{\bar{X}_P}}\ss{X_w^P}\to \ss{\partial X^u_P}\otimes_{\ss{\bar{X}_P}}\ss{X_w^P}\to 0.\] Hence,
	\begin{equation}\label{eulchar2}\chi_T(X_n^P, \xi^u_P\otimes_{\ss{\bar{X}_P}}\ss{X_w^P}) = \chi_T(X_n^P, \ss{X^u_w(P)})-\chi_T(X_n^P, \ss{(\partial X^u_P) \cap X_w^P}). 
		\end{equation}
	From \Cref{rich:props}, the Richardson variety $X^u_w(P)$ is irreducible (when nonempty, i.e., $u\leq w$) and hence $(\partial X^u_P)\cap X_w^P= \bigcup_{u<v\leq w} X^v_w(P)$ is connected (when nonempty) since $wP\in X^v_w(P)$ for all $u<v\leq w$. If $u\not\leq w$, $X^u_w(P)$ is empty, and hence  \Cref{eulchar1,eulchar2} imply $\id{[\xi^u_P],[\ss{X_w^P}]}=0$. Hence, we assume $u\leq w$ so that $X^u_w(P)$ is nonempty. By Proposition \ref{prop:finunion} (b),	
	\[H^i(X_n^P, \ss{X^u_w(P)}) = 0\hspace{5mm}\text{for all}\ i>0.\] 
(It should be mentioned that \Cref{rich:props}	does not depend upon the proof of this proposition or any other result in this paper dependent upon this proposition, thus avoiding any circularity of argument.)

	Further, by \Cref{prop:finunion} 
	(b) applied to $\partial X^u_P$, \[H^i(X_n^P,\ss{(\partial{X^u_P})\cap X_w^P}) =0\hspace{5mm}\text{for all}\ i>0.\] Thus, for $u\leq w$, \[\chi_T(X_n^P, \ss{X^u_w(P)})=1\] and for $u<w$, \[\chi_T(X_n^P, \ss{(\partial X^u_P)\cap X_w^P}) = 1.\] Therefore, when $u<w$, $\id{[\xi^u_P],[\ss{X_w^P}]} =0$. Finally, if $u=w$, we have $\id{[\xi^w_P],[\ss{X_w^P}]} =1$ since $\partial X^u_P\cap X^P_w$ is empty in this case. 		
	\end{proof}
Let $\Delta:X_P\to X_P\times X_P$ be the diagonal map. Express the coproduct in $K_0^T(X_P)$: \begin{equation*} \Delta_*[\ss{X_w^P}]=\sum_{u,v\in W^P}q_{u,v}^w(P)[\ss{X_u^P}]\otimes [\ss{X_v^P}],\,\,\,\text{for $q^w_{u,v}(P)\in R(T)$}.
\end{equation*} Also, express the product in $K_T^0(\bar{X}_P)$
\begin{equation}\label{coeff:d}[\xi^u_P]\cdot [\xi^v_P]=\sum_{w\in W^P} d_{u,v}^w(P) [\xi^w_P],\,\,\,\text{for $d^w_{u,v}(P) \in R(T)$}.\end{equation}
\begin{proposition}\label{identify:qd} For any $u,v,w\in W^P$, \begin{equation*}q_{u,v}^w(P)=d_{u,v}^w(P).
	\end{equation*}
	\end{proposition}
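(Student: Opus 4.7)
The plan is to obtain $d_{u,v}^w(P)=q_{u,v}^w(P)$ by pairing both sides of their respective defining identities against a common class and invoking \Cref{duality}. First I would extend the pairing $\id{\,,\,}$ of \cref{pair} to an exterior pairing
\[
\id{\,,\,}^{\times}\colon K^0_T(\bar{X}_P\times \bar{X}_P)\otimes K_0^T(X_P\times X_P)\longrightarrow R(T),
\]
characterized on decomposable classes by $\id{[\mathcal{S}_1\boxtimes \mathcal{S}_2],[\mathcal{F}_1\boxtimes \mathcal{F}_2]}^{\times}=\id{[\mathcal{S}_1],[\mathcal{F}_1]}\cdot\id{[\mathcal{S}_2],[\mathcal{F}_2]}$. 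Its well-definedness should follow by the same argument cited for the single-factor pairing in \cite[Proof of Lemma 3.5]{Ku}.

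The heart of the argument is the projection formula
\[
\id{[\xi^u_P]\cdot[\xi^v_P],[\ss{X_w^P}]}=\id{[\xi^u_P]\boxtimes [\xi^v_P],\Delta_*[\ss{X_w^P}]}^{\times},
\]
valid for all $u,v,w\in W^P$. At the sheaf level this is the composition of the identity $\Delta^*(\mathcal{S}_1\boxtimes \mathcal{S}_2)\simeq \mathcal{S}_1\otimes_{\ss{\bar{X}_P}}\mathcal{S}_2$ with the adjunction $\Delta^*\dashv \Delta_*$. To justify it K-theoretically, I would establish the product analogue of \Cref{prop:torvan}, namely $\stor^{\ss{\bar{X}_P\times\bar{X}_P}}_j(\xi^u_P\boxtimes\xi^v_P,\Delta_*\ss{X_w^P})=0$ for $j>0$, using that $\Delta_*\ss{X_w^P}$ locally admits a $T$-equivariant free resolution (by the product analogue of \Cref{KS:res}) and that the Tor-vanishing of \Cref{prop:finunion}(a) extends to $\bar{X}_P\times\bar{X}_P$ via the Frobenius splitting of the product ind-scheme and a K\"unneth argument.

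Granted the projection formula, the proof becomes a formal expansion. On the one hand, using \cref{coeff:d} together with \Cref{duality},
\[
\id{[\xi^u_P]\cdot[\xi^v_P],[\ss{X_w^P}]}=\sum_{w'\in W^P}d^{w'}_{u,v}(P)\,\id{[\xi^{w'}_P],[\ss{X_w^P}]}=d^{w}_{u,v}(P);
\]
on the other hand, expanding $\Delta_*[\ss{X_w^P}]$ in the basis $\{[\ss{X_{u'}^P}]\otimes[\ss{X_{v'}^P}]\}$ and applying \Cref{duality} in each factor gives
\[
\id{[\xi^u_P]\boxtimes [\xi^v_P],\Delta_*[\ss{X_w^P}]}^{\times}=\sum_{u',v'\in W^P}q^{w}_{u',v'}(P)\,\delta_{u,u'}\delta_{v,v'}=q^{w}_{u,v}(P).
\]

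The main obstacle is exactly the projection formula, because $\bar{X}_P$ is non-quasi-compact and the coproduct expansion of $\Delta_*[\ss{X_w^P}]$ is a priori infinite. The situation is rescued by the fact that $X_w^P$ is finite-dimensional and $\Delta_*\ss{X_w^P}$ is supported on $\Delta(X_w^P)\subset X_w^P\times X_w^P$, so only finitely many terms with $u',v'\le w$ contribute once one restricts to a sufficiently large quasi-compact open $V^S\times V^S$. Combined with the compatible Frobenius splitting on the product (which yields reducedness of the relevant scheme-theoretic intersections on $\bar{X}_P\times \bar{X}_P$), this should reduce the required Tor-vanishing and Euler-characteristic computation to the single-factor arguments already in place in \Cref{prop:finunion,prop:torvan,duality}.
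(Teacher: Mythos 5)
Your proposal is correct and is essentially the paper's own argument: the paper likewise computes $\id{\bar{\Delta}^*[\xi^u_P\boxtimes\xi^v_P],[\ss{X_w^P}]}$ in two ways, using the adjunction to move to $\id{[\xi^u_P\boxtimes\xi^v_P],\Delta_*[\ss{X_w^P}]}$ (your exterior pairing plus projection formula) and extracting $q_{u,v}^w(P)$ and $d_{u,v}^w(P)$ via Proposition \ref{duality}. Your elaboration of the Tor-vanishing and finiteness issues is just a more detailed justification of the same steps the paper leaves implicit.
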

\begin{proof} Let $\bar{\Delta}:\bar{X}_P\to \bar{X}_P\times\bar{X}_P$ be the diagonal map. Then, for any $w\in W^P$, 
	\begin{align*} \id{\bar{\Delta}^*[\xi^u_P\boxtimes \xi^v_P],[\ss{X_w^P}]} & = \id{[\xi^u_P\boxtimes \xi^v_P],\Delta_*[\ss{X_w^P}]}\\ & = \id{[\xi^u_P\boxtimes \xi^v_P],\sum_{u',v'\in W^P} q_{u',v'}^w(P)[\ss{X_{u'}^P}]\otimes [\ss{X_{v'}^P}]}\\& = q_{u,v}^w(P), \hspace{1cm} \text{by Proposition \ref{duality}}.
		\end{align*}
	On the other hand, since $[\xi^u_P]\cdot[\xi^v_P] = \bar{\Delta}^*[\xi^u_P\boxtimes\xi^v_P],$ we also have 
	\begin{align*}\id{\bar{\Delta}^*[\xi^u_P\boxtimes \xi^v_P],[\ss{X_w^P}]} & = \id{[\xi^u_P]\cdot[\xi^v_P],[\ss{X_w^P}]}\\ & = \left\langle\sum_{w'\in W^P} d_{u,v}^{w'}(P) [\xi^{w'}_P],[\ss{X_w^P}]\right\rangle\\ & = d_{u,v}^w(P), \hspace{1cm} \text{by Proposition \ref{duality}}\qedhere
		\end{align*}
	\end{proof}

\section{The Mixing Space and Mixing Group}\label{mixing}

In this section, we introduce the mixing space $(X_P)_\bb{P}$, which is a bundle over a product of projective spaces with fiber $X_P$. This allows the reduction from the $T$-equivariant $K$-theory to the non-equivariant $K$-theory. We then introduce the mixing group $\Gamma$ whose action is sufficient to allow for a transversality result used to prove part of our main technical result. 

Fix any positive integer $N$ and let $\bb{P} :=(\bb{P}^N)^r$, where $r=\dim T$. {\it Unless otherwise explicitly stated, $N$ is any positive integer.} Observe that $\mathbb{P}$ depends upon the choice of $N$. For any $\bj= (j_1,\cdots, j_r) \in [N]^r$, where $[N] :=\{0,1,\cdots, N\}$, set \begin{equation*}\bb{P}^\bj = \bb{P}^{N-{j_1}}\times\cdots\times \bb{P}^{N-{j_r}} \text{ and } \bb{P}_\bj = \bb{P}^{j_1}\times\cdots\times \bb{P}^{j_r}. 
\end{equation*} We also define the boundary of $\bb{P}_\bj$ by 
\begin{equation*}\partial\bb{P}_\bj:=(\bb{P}^{j_1-1}\times\bb{P}^{j_2}\times\cdots\times\bb{P}^{j_r})\cup\cdots\cup (\bb{P}^{j_1}\times\bb{P}^{j_2}\times\cdots\times\bb{P}^{j_r-1}),
\end{equation*} where we interpret $\bb{P}^{-1}:=\varnothing$ as the empty set. Throughout the paper, we fix an identification $T\cong (\bb{C}^*)^r$ under $t\mapsto (e^{\alpha_1}(t),\cdots,e^{\alpha_r}(t))$.

\begin{defn} Let $E(T)_\bb{P}:=(\bb{C}^{N+1}\setminus\{0\})^r$ be the total space of the standard principal $T$-bundle $E(T)_\bb{P}\to \bb{P}$. Let $\pi_{X_P}:(X_P)_\bb{P}:=E(T)_\bb{P}\times^T X_P\to \bb{P}$ be the fibration with fiber $X_P$ associated to the principal $T$-bundle $E(T)_\bb{P}\to \bb{P}$, where we twist the action of $T$ on $X_P$ via \begin{equation}t\odot x = t^{-1}x.\label{twist}
	\end{equation} For any $T$-subscheme $Y\subset X_P$, denote $Y_\bb{P}:=E(T)_\bb{P}\times^T Y\subset (X_P)_\bb{P}$.
	\end{defn}
The following proposition follows easily by using \cite[$\S$5.2.14 and Theorem 5.4.17]{CG} applied to the vector bundle $(BwP/P)_{\bb{P}}\to \bb{P}$. 
\begin{proposition} $K_0((X_P)_\bb{P}):=\textup{Lim}_{n\to\infty} K_0((X^P_n)_\bb{P})$ is a free module over the ring $K_0(\bb{P})=K^0(\bb{P})$ with basis $\{[\ss{(X_w^P)_\bb{P}}]\}_{w\in W^P}$. Therefore, $K_0((X_P)_\bb{P})$ has a $\bb{Z}$-basis \begin{equation*}\{\pi_{X_P}^*([\ss{\bb{P}^\bj}])\cdot[\ss{(X_w^P)_\bb{P}}]\}_{\bj\in[N]^r,\ w\in W^P},
	\end{equation*} where we view $[\ss{\bb{P}^\bj}]$ as an element of $K_0(\bb{P})=K^0(\bb{P})$. 
	\end{proposition}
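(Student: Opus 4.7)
The plan is to apply the cellular fibration lemma of Chriss--Ginzburg [CG, \S 5.2.14 and Theorem 5.4.17] to the Schubert stratification of $(X_n^P)_\bb{P}$ fibered over $\bb{P}$, and then pass to the direct limit in $n$.

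The first step is to verify that each Schubert cell $C_w^P = BwP/P$ is $T$-equivariantly isomorphic to an affine space carrying a \emph{linear} $T$-action. Since $w \in W^P$, one has $C_w^P \cong U_w := U \cap wU^-w^{-1}$ as $T$-varieties (with $T$ acting by conjugation), and $U_w$ is a finite-dimensional unipotent group whose exponential map provides a $T$-equivariant isomorphism with its Lie algebra $\mf{u}_w$, a vector space on which $T$ acts linearly with weights the positive roots sent negative by $w^{-1}$. Consequently, the mixing construction $(C_w^P)_\bb{P} = E(T)_\bb{P} \times^T C_w^P \to \bb{P}$ is an honest vector bundle of rank $l(w)$ over $\bb{P}$, which is the input required by [CG].

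Next, I would enumerate $\{w \in W^P : l(w) \leq n\}$ as $w_1, \ldots, w_m$ with weakly increasing lengths (so that $v < w_k$ in the Bruhat order forces $v \in \{w_1, \ldots, w_{k-1}\}$), and set $Y_k := \bigcup_{i \leq k} X_{w_i}^P$. Each $Y_k$ is closed in $X_n^P$, and $Y_k \setminus Y_{k-1} = C_{w_k}^P$, so after mixing one obtains a filtration $(Y_0)_\bb{P} \subset (Y_1)_\bb{P} \subset \cdots \subset (Y_m)_\bb{P} = (X_n^P)_\bb{P}$ by closed subvarieties whose successive differences are the vector bundles $(C_{w_k}^P)_\bb{P}$ over $\bb{P}$ from the first step. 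This is precisely the setup of a cellular fibration, and [CG, Theorem 5.4.17] then yields that $K_0((X_n^P)_\bb{P})$ is a free $K_0(\bb{P})$-module with basis $\{[\ss{(X_{w_k}^P)_\bb{P}}]\}_{1 \leq k \leq m}$ (the passage from the open cells to their closures being via the inductive application of the excision sequence on each $(Y_k)_\bb{P}$). Passing to the direct limit as $n \to \infty$ --- noting that the transition maps are injective pushforwards sending Schubert classes to Schubert classes --- then establishes the first claim.

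For the $\bb{Z}$-basis statement, the standard computation gives $K_0(\bb{P}^N) = \bigoplus_{j=0}^N \bb{Z}[\ss{\bb{P}^{N-j}}]$, and the Künneth formula applied to $\bb{P} = (\bb{P}^N)^r$ produces the $\bb{Z}$-basis $\{[\ss{\bb{P}^\bj}]\}_{\bj \in [N]^r}$ of $K_0(\bb{P})$ (with $K_0(\bb{P}) = K^0(\bb{P})$ since $\bb{P}$ is smooth). Multiplying by the $K_0(\bb{P})$-basis obtained above yields the claimed $\bb{Z}$-basis of $K_0((X_P)_\bb{P})$. The main technical point I would need to examine carefully is the affine-bundle structure on each mixed Schubert cell --- concretely, that the $T$-action on $\mf{u}_w$ is genuinely linear --- after which everything else is formal bookkeeping on top of the Chriss--Ginzburg machinery.
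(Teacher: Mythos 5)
Your proposal is correct and follows essentially the same route as the paper, which likewise deduces the result from the Chriss--Ginzburg cellular fibration machinery [CG, \S 5.2.14 and Theorem 5.4.17] applied to the vector bundles $(BwP/P)_\bb{P}\to\bb{P}$, then passes to the limit over $n$ and uses the standard basis of $K_0(\bb{P})$. You have merely written out the details (linearity of the $T$-action on the cell via $C^P_w\cong U\cap wU^-w^{-1}$ and the exponential map, the length-ordered filtration, and the K\"unneth step) that the paper leaves implicit.
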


Let $Y_P:=X_P\times X_P$. The diagonal map $\Delta:X_P\to Y_P$ gives rise to the embedding \begin{equation*}\tilde{\Delta}:(X_P)_\bb{P}\to (Y_P)_\bb{P}=E(T)_\bb{P}\times^TY_P\cong (X_P)_\bb{P}\times_\bb{P}(X_P)_\bb{P}.
\end{equation*} Therefore, we have (denoting the projection $(Y_P)_\bb{P}\to\bb{P}$ by $\pi_{Y_P}$) \begin{equation}\label{equivcoeff:c}\tilde{\Delta}_*[\ss{(X_w^P)_\bb{P}}]=\sum_{\substack{u,v\in W^P\\ \bj\in[N]^r}} c_{u,v}^w(\bj)\pi^*_{Y_P}([\ss{\bb{P}^\bj}])\cdot [\ss{(X_u^P\times X_v^P)_\bb{P}}]\in K_0((Y_P)_\bb{P}) 
\end{equation} for some $c_{u,v}^w(\bj)\in\bb{Z}$. 

Let $\bar{Y}_P=\bar{X}_P\times\bar{X}_P$ and let $K^0((\bar{Y}_P)_\bb{P})$ denote the Grothendieck group associated to the semi-group of coherent $\ss{(\bar{Y}_P)_\bb{P}}$-modules. Define, for $u,v\in W^P$, 
 \[\widetilde{\xi^u_P\boxtimes \xi^v_P}:= \idshf{(X^u_P\times X^v_P)_\bb{P}}{\partial((X^u_P\times X^v_P)_\bb{P})} \in K^0((\bar{Y}_P)_\bb{P}),\]
where $\partial((X^u_P\times X^v_P)_\bb{P}):=((\partial X^u_P\times X^v_P)\cup (X^u_P\times\partial X^v_P))_\bb{P}$.

\begin{lemma} \label{lemma:c} With the notation as above, \begin{equation}\label{coeff:c}c_{u,v}^w(\bj) = \id{\pi_{\bar{Y}_P}^*[\idshf{\bb{P}_\bj}{\partial \bb{P}_\bj}]\cdot[\widetilde{\xi^u_P\boxtimes\xi^v_P}],\tilde{\Delta}_*[\ss{(X_w^P)_\bb{P}}]},
		\end{equation} where $\pi_{\bar{Y}_P}:(\bar{Y}_P)_\bb{P}\to \bb{P}$ is the projection and the pairing $\id{\ ,\ }:K^0((\bar{Y}_P)_\bb{P})\otimes K_0((Y_P)_\bb{P})\to \bb{Z}$ is defined similar to \eqref{pair} above. Explicitly, \begin{equation} \id{[\mathcal{S}],[\mathcal{F}]}=\sum_i (-1)^i\chi((\bar{Y}_P)_\bb{P},\stor_i^{\ss{(\bar{Y}_P)_\bb{P}}}(\mathcal{S},\mathcal{F})),
	\end{equation}
where $\chi$ denotes the (non-equivariant) Euler-Poincar\'e characteristic.
	\end{lemma}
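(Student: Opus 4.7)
The plan is to prove \eqref{coeff:c} by establishing the orthogonality relation
\begin{equation*}
\id{\pi_{\bar{Y}_P}^*[\idshf{\bb{P}_\bj}{\partial\bb{P}_\bj}]\cdot [\widetilde{\xi^u_P\boxtimes\xi^v_P}],\ \pi_{Y_P}^*[\ss{\bb{P}^{\bj'}}]\cdot [\ss{(X_{u'}^P\times X_{v'}^P)_\bb{P}}]} = \delta_{u,u'}\delta_{v,v'}\delta_{\bj,\bj'},
\end{equation*}
and then pairing the dual-basis element $\pi_{\bar{Y}_P}^*[\idshf{\bb{P}_\bj}{\partial\bb{P}_\bj}]\cdot[\widetilde{\xi^u_P\boxtimes\xi^v_P}]$ with both sides of \eqref{equivcoeff:c}; by orthogonality, the sum on the right collapses to $c_{u,v}^w(\bj)$, exactly matching the right-hand side of \eqref{coeff:c}. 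This mirrors the argument of Proposition \ref{identify:qd}, transported to the mixing space.

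For the orthogonality, I would first verify a mixing-space analog of Proposition \ref{prop:torvan}: the sheaf $\stor_j^{\ss{(\bar{Y}_P)_\bb{P}}}$ between $\widetilde{\xi^u_P\boxtimes\xi^v_P}$ and $\ss{(X_{u'}^P\times X_{v'}^P)_\bb{P}}$ vanishes for $j>0$. Fiberwise this follows from Proposition \ref{prop:torvan} applied in each factor of $\bar{X}_P\times\bar{X}_P$ together with a K\"unneth argument, and it globalizes along $(\bar{Y}_P)_\bb{P}\to\bb{P}$ by flatness of the mixing construction. An analogous vanishing handles the pullbacks from $\bb{P}$. Hence the pairing collapses to the single $T$-equivariant Euler characteristic
\begin{equation*}
\chi\bigl((\bar{Y}_P)_\bb{P},\ \pi_{\bar{Y}_P}^*(\idshf{\bb{P}_\bj}{\partial\bb{P}_\bj}\otimes\ss{\bb{P}^{\bj'}})\otimes \widetilde{\xi^u_P\boxtimes\xi^v_P}\otimes \ss{(X_{u'}^P\times X_{v'}^P)_\bb{P}}\bigr).
\end{equation*}

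I would then invoke the projection formula for $\pi_{\bar{Y}_P}$ to split base and fiber contributions. On the fibers, Proposition \ref{duality} applied to each factor of $\bar{X}_P\times\bar{X}_P$, combined with a K\"unneth formula and the rational-singularities/Cohen-Macaulay properties of the Richardson varieties, identifies $R\pi_{\bar{Y}_P,*}(\widetilde{\xi^u_P\boxtimes\xi^v_P}\otimes \ss{(X_{u'}^P\times X_{v'}^P)_\bb{P}})$ with $\delta_{u,u'}\delta_{v,v'}\,[\ss{\bb{P}}]$ in $K_0(\bb{P})$. The problem reduces to the base calculation $\chi(\bb{P},\idshf{\bb{P}_\bj}{\partial\bb{P}_\bj}\otimes\ss{\bb{P}^{\bj'}}) = \delta_{\bj,\bj'}$, which by K\"unneth factors into the one-variable claim $\chi(\bb{P}^N,\idshf{\bb{P}^j}{\partial\bb{P}^j}\otimes\ss{\bb{P}^{N-j'}}) = \delta_{j,j'}$; here $\bb{P}_j\cap\bb{P}^{N-j'}$ is a single reduced point exactly when $j=j'$, and $\idshf{\bb{P}^j}{\partial\bb{P}^j}$ is the standard dual class to $\ss{\bb{P}^{N-j'}}$ under the Euler pairing on $\bb{P}^N$.

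The main obstacle is the careful globalization from fiberwise Tor vanishing and duality to the bundle $(\bar{Y}_P)_\bb{P}\to\bb{P}$, in particular justifying the K\"unneth reduction when the bundle is only locally a product. This is mitigated by flatness of the mixing construction and by the fact that the Frobenius-compatible splittings and rational-singularity properties of the various Schubert and Richardson loci listed in \S\ref{notation} are preserved under the $E(T)_\bb{P}\times^T(-)$ construction. Once orthogonality is secured, the extraction of $c_{u,v}^w(\bj)$ from \eqref{equivcoeff:c} is a one-line application of the pairing.
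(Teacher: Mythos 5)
Your proposal takes essentially the same route as the paper: the paper likewise pairs $\pi_{\bar{Y}_P}^*[\idshf{\bb{P}_\bj}{\partial \bb{P}_\bj}]\cdot[\widetilde{\xi^u_P\boxtimes\xi^v_P}]$ against the expansion \eqref{equivcoeff:c} and collapses the sum to $c_{u,v}^w(\bj)$ using exactly the orthogonality relation you state, which it cites as Proposition \ref{duality} together with identity (20) of \cite{Ku} instead of re-deriving it. Your sketch of that orthogonality (Tor vanishing, base--fiber splitting, fiberwise duality, and the computation $\chi(\bb{P},\idshf{\bb{P}_\bj}{\partial\bb{P}_\bj}\otimes\ss{\bb{P}^{\bj'}})=\delta_{\bj,\bj'}$) is precisely the content of the cited identity, so the argument is correct and essentially identical.
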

\begin{proof} We compute 
	\begin{align*} &\id{\pi_{\bar{Y}_P}^*[\idshf{\bb{P}_\bj}{\partial \bb{P}_\bj}]\cdot[\widetilde{\xi^u_P\boxtimes\xi^v_P}],\tilde{\Delta}_*[\ss{(X_w^P)_\bb{P}}]} \\
		&\hspace{3mm} = \id{\pi_{\bar{Y}_P}^*[\idshf{\bb{P}_\bj}{\partial \bb{P}_\bj}]\cdot[\widetilde{\xi^u_P\boxtimes\xi^v_P}],\sum_{\substack{u',v'\in W^P\\ \bj'\in[N]^r}} c_{u',v'}^w(\bj')\pi^*_{Y_P}([\ss{\bb{P}^{\bj'}}])\cdot [\ss{(X_{u'}^P\times X_{v'}^P)_\bb{P}}]}\\
		&\hspace{3mm} = c_{u,v}^w(\bj)\hspace{5mm}\text{by \Cref{duality} and \cite[identity (20)]{Ku}}.\qedhere
		\end{align*}
	\end{proof}

\begin{defn} Let $T$ act on $B$ via \[t\cdot b= t^{-1}bt\] where $t\in T$ and $b\in B$. Then there is a natural action of $\Delta T$ on $B\times B$. Let $(B^2)_\bb{P}$ be the ind-group scheme over $\bb{P}$: \[(B^2)_\bb{P}:=E(T)_\bb{P}\times^T(B\times B)\to \bb{P}\] and let $\Gamma_0$ denote the group of global sections of $(B^2)_\bb{P}$ under pointwise multiplication. Thus, $\Gamma_0$ can be identified with the set of regular maps $f: E(T)_\bb{P} \to B\times B$ such that $f(e\cdot t) =t^{-1}\cdot f(e)$ for all $e\in E(T)_\bb{P}$ and $t\in T$. Now, $GL(N+1)^r$ acts canonically on $(B^2)_\bb{P}$ in a way compatible with its action on $\bb{P}$ and acts on $\Gamma_0$ via its pull-back.
Thus, $GL(N+1)^r$ normalizes $\Gamma_0$, where the conjugation action is explicitly given by (for $g\in GL(N+1)^r$ and $\gamma_0 \in \Gamma_0$):
$$(g\gamma_0g^{-1})[e, b]= [e, \bar{\gamma}_0(g^{-1}e)\cdot b]$$
for $e\in E(T)_{\bb{P}}$ and $b\in B\times B$, where $\gamma_0[e, b]= [e, \bar{\gamma}_0(b)]$. 
We define the mixing group $\Gamma=\Gamma_{B^2}:=\Gamma_0\rtimes GL(N+1)^r$: \[1\to \Gamma_0\to\Gamma\to GL(N+1)^r\to 1.\] By the comments following \cite[Lemmas 4.7 and 4.8]{Ku}, we have the following two lemmas. 
	\end{defn}
\begin{lemma} \label{connected} $\Gamma$ is connected.
	\end{lemma}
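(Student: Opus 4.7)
The plan is to reduce the connectedness of $\Gamma$ to that of its normal subgroup $\Gamma_0$, and then analyze $\Gamma_0$ via the Levi decomposition of $B$. From the defining sequence $1 \to \Gamma_0 \to \Gamma \to GL(N+1)^r \to 1$, which splits by the very definition of $\Gamma$ as a semidirect product, the multiplication map $\Gamma_0 \times GL(N+1)^r \to \Gamma$ is a bijection. Since $GL(N+1)^r$ is connected as a product of connected linear groups, it suffices to show that $\Gamma_0$ is connected.

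Next I would use the decomposition $B = H \ltimes U$, where $U$ is the unipotent radical of $B$. The $T$-action on $B$ by $t \cdot b = t^{-1} b t$ preserves both $H$ and $U$, so the bundle $(B^2)_\bb{P} = E(T)_\bb{P} \times^T (B \times B)$ splits $T$-equivariantly into an $H^2$-part and a $U^2$-part. Correspondingly, $\Gamma_0 = \Gamma_0(H^2) \ltimes \Gamma_0(U^2)$, and it suffices to show each factor is connected. Since $H$ is abelian and $T = H/Z(\Gm)$ is a quotient of $H$, the $T$-action on $H$ by conjugation is trivial, so $(H^2)_\bb{P} = \bb{P} \times H^2$ and global sections are regular maps $\bb{P} \to H^2$. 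As $\bb{P}$ is projective and $H \cong (\bb{C}^*)^r$ is affine, every such map is constant, giving $\Gamma_0(H^2) \cong H^2$, which is connected.

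For $\Gamma_0(U^2)$, I would filter $U \times U$ by a descending sequence of normal $T$-stable subgroups $\{F_i\}_{i \geq 0}$ with $F_0 = U \times U$, refined so that each successive quotient $F_i/F_{i+1}$ is a single $T$-weight space $\mf{g}_\alpha$ (a one-dimensional additive group on which $T$ acts via the character $\alpha$). The associated quotient bundle on $\bb{P}$ is a $T$-equivariant line bundle $\mathcal{L}_\alpha$, and the finite-dimensional vector space $H^0(\bb{P}, \mathcal{L}_\alpha)$ is connected. Surjectivity of the successive restriction maps $\Gamma_0(((U \times U)/F_{i+1})_\bb{P}) \to \Gamma_0(((U \times U)/F_i)_\bb{P})$ follows from the vanishing $H^1(\bb{P}, \mathcal{L}_\alpha) = 0$ on $\bb{P} = (\bb{P}^N)^r$ (by the K\"unneth formula together with $H^1(\bb{P}^N, \mathcal{O}(k)) = 0$). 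Inductive layering along this filtration, followed by passage to the limit, shows that $\Gamma_0(U^2)$ is connected.

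The main obstacle is the handling of the infinite-dimensional unipotent radical $U$ in the Kac-Moody setting and the need to control the limit along the filtration; however, the cohomology vanishing on $\bb{P}$ together with the splittings afforded by the Levi decomposition makes this routine, and the argument runs parallel to the one recorded after \cite[Lemmas 4.7 and 4.8]{Ku}.
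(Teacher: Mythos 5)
Your argument is essentially correct, but note that the paper gives no proof of this lemma at all: it simply quotes the comments following Lemmas 4.7 and 4.8 of \cite{Ku} (the group $\Gamma$ does not depend on $P$, so the statement there applies verbatim). Your reconstruction is the AGM-style argument: reduce to $\Gamma_0$ via the semidirect product, split $B=H\ltimes U$ $T$-equivariantly, get constant sections $\cong H^2$ on the torus part because $\bb{P}$ is projective and $H^2$ affine, and build the unipotent part as an iterated extension whose layers are the connected vector groups $H^0(\bb{P},\mathcal{L}_\alpha)$, with surjectivity of the transition maps coming from $H^1(\bb{P},\mathcal{L}_\alpha)=0$. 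Two points deserve to be made explicit: the $H^1$-vanishing on $(\bb{P}^N)^r$ for arbitrary twists requires $N\geq 2$ (harmless, since $N$ is fixed large), and the ``passage to the limit'' over the infinite-dimensional $U$ should be justified, e.g.\ by observing that a nontrivial idempotent in the coordinate ring of the limit would already live at a finite $T$-stable quotient, or---as the paper does later for the action on $(X^2_w(P))_\bb{P}$---by working with the finite-dimensional quotients $\bar{\Gamma}=\Gamma_w$ through which everything factors, so that connectedness of these quotients is what is actually used. An alternative route, which avoids both the cohomological input and the limit bookkeeping, is to contract any section to its $H^2$-component by conjugating pointwise with $\lambda(z)$ for a dominant regular one-parameter subgroup $\lambda$ of $T$ (this commutes with the twisting $T$-action since $T$ is abelian, and the limit as $z\to 0$ exists because positive roots pair positively with $\lambda$); this connects every element of $\Gamma_0$ to the connected subgroup $H^2$ of constant sections.
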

\begin{lemma}\label{fiber} Given any $\bar{e}\in \bb{P}$ and any $(b_1,b_2)$ in the fiber of $(B^2)_\bb{P}$ over $\bar{e}$, there exists a section $\gamma\in\Gamma_0$ such that $\gamma(\bar{e})=(b_1,b_2)$. 
	\end{lemma}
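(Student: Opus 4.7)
The group $(B^2)_\bb{P}$ and the section group $\Gamma_0$ depend only on $T$ and the principal $T$-bundle $E(T)_\bb{P}\to\bb{P}$, not on the parabolic $P$, so the statement is essentially the content of \cite[Lemma 4.8]{Ku}, and I sketch how I would recover the argument. Fix a lift $e\in E(T)_\bb{P}$ of $\bar{e}$; this identifies the fiber of $(B^2)_\bb{P}$ over $\bar{e}$ with $B\times B$. A section $\gamma\in\Gamma_0$ corresponds to a regular $T$-equivariant map $f:E(T)_\bb{P}\to B\times B$ with $f(e\cdot t)=t^{-1}\star f(e)$, where $\star$ is the action $t\star(b_1,b_2)=(t^{-1}b_1t,t^{-1}b_2t)$. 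Since the two factors decouple under the diagonal $T$-action, the problem reduces to producing, for any prescribed $b\in B$, a $T$-equivariant regular map $f:E(T)_\bb{P}\to B$ with $f(e)=b$.

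Decompose $B=T\cdot U$, with $U$ the pro-unipotent radical (\cite[Chapter 6]{K}). Under the twisted conjugation $t\star b=t^{-1}bt$, the torus $T$ acts trivially on $T\subset B$, so any constant map $\bb{P}\to T$ is $T$-equivariant and realizes any prescribed $T$-component of $b$. For the $U$-component, use the $T$-weight decomposition: fixing an ordering of the positive roots, $U$ admits a product decomposition whose weight pieces $U_\mu$ are finite-dimensional affine spaces on which $T$ acts linearly with weight $-\mu$. The equivariance condition on the $U_\mu$-component of $f$ then reads $f_\mu(e\cdot t)=e^{\mu}(t)f_\mu(e)$.

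The core step is to produce each $f_\mu$ as a regular function of the correct $T$-weight taking any prescribed value at $e$. Writing $\mu=\sum_i n_i\alpha_i$ with $n_i\in\bb{Z}_{\geq 0}$, and using coordinates $z=(z^{(1)},\dots,z^{(r)})$ on $E(T)_\bb{P}=(\bb{C}^{N+1}\setminus\{0\})^r$ with $T$ acting by diagonal scaling by $t_i=e^{\alpha_i}(t)$ on the $i$-th factor, pick for each $i$ a linear form $\ell_i$ on $\bb{C}^{N+1}$ with $\ell_i(e^{(i)})\neq 0$ and set
\[f_\mu(z)=c_\mu\prod_{i=1}^r\left(\frac{\ell_i(z^{(i)})}{\ell_i(e^{(i)})}\right)^{n_i},\]
where $c_\mu\in U_\mu$ is the prescribed $\mu$-component of $b$ (the product is componentwise in the affine space $U_\mu$). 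A direct check gives $f_\mu(z\cdot t)=e^\mu(t)f_\mu(z)$ and $f_\mu(e)=c_\mu$, so $f_\mu$ has the desired weight and value.

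Finally, assemble the $f_\mu$ via the ordered product in $U$ and combine with the constant section into $T$ to obtain $f$. The one subtlety, which I expect to be the main obstacle, is showing that this infinite assembly genuinely defines a regular morphism into the pro-unipotent ind-scheme $U$: one checks that each congruence quotient $U/U^{(n)}$ is finite-dimensional and receives contributions from only finitely many $f_\mu$, so the induced maps $E(T)_\bb{P}\to U/U^{(n)}$ are all regular, giving a regular morphism into $U=\varprojlim U/U^{(n)}$. This is precisely the content of the comments following \cite[Lemma 4.8]{Ku}, and is the only nontrivial ingredient beyond the root-by-root construction.
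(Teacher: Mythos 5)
Your opening reduction is exactly the paper's proof: the bundle $(B^2)_\bb{P}$ and the section group $\Gamma_0$ do not involve $P$ at all, and the paper simply invokes the comments following \cite[Lemmas 4.7 and 4.8]{Ku}. Your weight-by-weight construction of the equivariant regular functions (products of powers of linear forms of multidegree $(n_1,\dots,n_r)$ for $\mu=\sum_i n_i\alpha_i$ with $n_i\geq 0$, chosen nonvanishing at the fixed lift $e$) is the correct mechanism, and it is precisely where the twist conventions $t\cdot b=t^{-1}bt$ are used to ensure the required weights are \emph{effective}, so the functions are regular on all of $E(T)_\bb{P}$.

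The step you single out as ``the only nontrivial ingredient,'' however, is where the sketch goes astray. The $B$ in the mixing group is the positive Borel of the minimal group $\Gm$ (it has to be, since $\Gamma_0$ must act on $Y_P=X_P\times X_P$ with $X_P=\Gm/P$), so its unipotent radical $U$ is the ind-group generated by the real positive root subgroups; it is \emph{not} pro-unipotent, and $\varprojlim U/U^{(n)}$ is the completion $\hat{U}$ rather than $U$. Prescribing a component $f_\mu$ for \emph{every} positive root (imaginary ones included) from the ``components of $b$'' and assembling an infinite ordered product yields a regular map into $H\ltimes\hat{U}$ whose values at points other than $e$ need not lie in $B$, hence need not give a section of $(B^2)_\bb{P}$ as defined. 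The repair is easy and in fact shortens the argument: writing $b=h\prod_{k=1}^m\exp(x_k)$ with $h\in H$ and $x_k\in\mathfrak{g}_{\beta_k}$ for finitely many real positive roots $\beta_k$, set $f(z)=h\prod_{k=1}^m\exp(\phi_{\beta_k}(z)\,x_k)$ with your weight-$e^{\beta_k}$ functions normalized to equal $1$ at $e$; this finite product is manifestly regular into $B$, equivariant, and equals $b$ at $e$, so no limit over congruence quotients is needed. (Also a small point: $B=H\ltimes U$, not $T\cdot U$; the adjoint torus $T=H/Z(\Gm)$ is a quotient of $H$, not a subgroup of $B$, so the constant factor should be the $H$-component $h$ of $b$.)
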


Define the action of $\Gamma$ on $(Y_P)_\bb{P}$ by \[(\gamma,g)\cdot [e,(y,y')]=[ge,\gamma(ge)\cdot(y,y')]\] for $\gamma\in \Gamma_0$, $g\in GL(N+1)^r$, $e\in E(T)_\bb{P}$, and $(y,y')\in {Y}_P$, where the action of $\Gamma_0$ is via the standard action of $B^2$ on ${Y}_P$. From \Cref{fiber}, it follows that the orbits of the $\Gamma$-action on $(Y_P)_\bb{P}$ are precisely $\{(C_u^P\times C_v^P)_\bb{P}\}_{u,v\in W^P}$.

\begin{proposition}\label{product} For any coherent sheaf $\mathcal{S}$ on $\bb{P}$, and any $u,v\in W^P$, \[\pi^*[\mathcal{S}]\cdot[\widetilde{\xi^u_P\boxtimes\xi^v_P}]=[\pi^*(\mathcal{S})\otimes_{\ss{\xyp{Y}}}(\widetilde{\xi^u_P\boxtimes\xi^v_P})]\in K^0(\xyp{Y}),\] where we abbreviate $\pi_{\bar{Y}_P}$ by $\pi$ and $\pi^*(\mathcal{S})=\ss{\xyp{Y}}\otimes_{\ss{\bb{P}}}\mathcal{S}$. In particular, \[\pi^*[\idshf{\bb{P}_\bj}{\partial\bb{P}_\bj}]\cdot[\widetilde{\xi^u_P\boxtimes\xi^v_P}]=[\pi^*(\idshf{\bb{P}_\bj}{\partial\bb{P}_\bj})\otimes_{\ss{\xyp{Y}}}(\widetilde{\xi^u_P\boxtimes\xi^v_P})].\]
	\end{proposition}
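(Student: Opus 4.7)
The identity amounts to the Tor-vanishing
\[\stor_i^{\ss{\xyp{Y}}}(\pi^*\mathcal{S},\ \widetilde{\xi^u_P\boxtimes\xi^v_P})=0 \quad\text{for all } i>0.\]
Granting this, the proposition follows quickly: since $\bb{P}$ is smooth projective, $\mathcal{S}$ admits a finite locally free resolution $\mathcal{E}_\bullet\to\mathcal{S}$. The morphism $\pi$ is Zariski-locally trivial (as $E(T)_\bb{P}\to\bb{P}$ is a product of Hopf bundles), hence flat, so $\pi^*\mathcal{E}_\bullet\to\pi^*\mathcal{S}$ is again a locally free resolution on $\xyp{Y}$. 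By definition of the $K^0$-product,
\[\pi^*[\mathcal{S}]\cdot[\widetilde{\xi^u_P\boxtimes\xi^v_P}]=\sum_i(-1)^i[\pi^*\mathcal{E}_i\otimes_{\ss{\xyp{Y}}}\widetilde{\xi^u_P\boxtimes\xi^v_P}],\]
and the vanishing collapses this alternating sum to its $i=0$ term $[\pi^*\mathcal{S}\otimes_{\ss{\xyp{Y}}}\widetilde{\xi^u_P\boxtimes\xi^v_P}]$, giving the asserted identity.

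For the Tor-vanishing I would argue locally on $\bb{P}$. Choose a Zariski open cover $\{U_\alpha\}$ of $\bb{P}$ over which $\pi^{-1}(U_\alpha)\cong U_\alpha\times\bar{Y}_P$. Under such a trivialization, $\pi^*\mathcal{S}|_{\pi^{-1}(U_\alpha)}$ is identified with $\mathcal{S}|_{U_\alpha}\boxtimes\ss{\bar{Y}_P}$, while $\widetilde{\xi^u_P\boxtimes\xi^v_P}|_{\pi^{-1}(U_\alpha)}$ is identified with $\ss{U_\alpha}\boxtimes(\xi^u_P\boxtimes\xi^v_P)$. The latter is manifestly flat over $\ss{U_\alpha}$ (it is pulled back via the first projection), i.e.\ flat over the restriction of $\pi^{-1}\ss{\bb{P}}$. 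Rewriting
\[\pi^*\mathcal{E}_\bullet\otimes_{\ss{\xyp{Y}}}\widetilde{\xi^u_P\boxtimes\xi^v_P}=\pi^{-1}\mathcal{E}_\bullet\otimes_{\pi^{-1}\ss{\bb{P}}}\widetilde{\xi^u_P\boxtimes\xi^v_P},\]
the right-hand complex is exact in positive degrees, because $\pi^{-1}\mathcal{E}_\bullet\to\pi^{-1}\mathcal{S}$ is exact ($\pi^{-1}$ is exact) and tensoring over $\pi^{-1}\ss{\bb{P}}$ with a flat module preserves exactness. This gives the desired vanishing.

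The second assertion of the proposition is the special case $\mathcal{S}=\idshf{\bb{P}_\bj}{\partial\bb{P}_\bj}$. The main obstacle is largely bookkeeping: verifying that the restriction of $\widetilde{\xi^u_P\boxtimes\xi^v_P}$ to a trivializing patch genuinely takes the external-tensor-product form above, and that the rewriting from $\ss{\xyp{Y}}$-tensor to $\pi^{-1}\ss{\bb{P}}$-tensor is legitimate at the level of complexes. A cleaner variant avoiding explicit trivializations is induction on the length of the finite resolution $\mathcal{E}_\bullet$: the base case (when $\mathcal{S}$ is itself locally free) is immediate because $\pi^*\mathcal{S}$ is then locally free on $\xyp{Y}$, and the inductive step follows from the long exact Tor sequence attached to $0\to\mathcal{K}\to\mathcal{E}_0\to\mathcal{S}\to 0$.
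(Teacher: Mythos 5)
Your argument is correct and is essentially the argument the paper invokes (it cites the proof of \cite[Proposition 4.9]{Ku}): one takes a finite locally free resolution of $\mathcal{S}$ on $\bb{P}$, pulls it back along the flat map $\pi$, and uses that $\widetilde{\xi^u_P\boxtimes\xi^v_P}$ is locally of the external-product form $\ss{U}\boxtimes(\xi^u_P\boxtimes\xi^v_P)$, hence flat over $\ss{\bb{P}}$, to kill the higher $\stor$'s and collapse the alternating sum to the ordinary tensor product.
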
 

\begin{proof} This follows from the same argument as that of \cite[Propositon 4.9]{Ku}.
	\end{proof}

\section{Statement of Main Results}

The following is our main technical result. The proof of its two parts (A) and (B) are given in Sections 6 and 10 respectively. 

\begin{theorem}\label{main} Let $N$ be any positive integer. For general $\gamma\in \Gamma$, $\bj\in [N]^r$, and any $u,v,w\in W^P$,
	\begin{enumerate}\item[(A)] $\stor_i^{\ss{\xyp{Y}}}(\pi^*(\idshf{\bb{P}_\bj}{\partial \bb{P}_\bj})\otimes(\widetilde{\xi^u_P\boxtimes\xi^v_P}),\gamma_*\tilde{\Delta}_*(\ss{(X_w^P)_\bb{P}}))=0$ for all $i>0$, where $\gamma\in\Gamma$ is viewed as an automorphism of the scheme $(Y_P)_{\mathbb{P}}$. \\
		
		\item[(B)] For $c_{u,v}^w(\bj)\neq 0$, where $c_{u,v}^w(\bj)$ is defined by \eqref{equivcoeff:c}		
		(see also \eqref{coeff:c}), \[H^p(\xyp{Y},\pi^*(\idshf{\bb{P}_\bj}{\partial \bb{P}_\bj})\otimes(\widetilde{\xi^u_P\boxtimes\xi^v_P})\otimes\gamma_*\tilde{\Delta}_*(\ss{(X_w^P)_\bb{P}}))=0\] for all $p\neq |\bj|+l(w)-(l(u)+l(v))$, where $|\bj|=\sum_{i=1}^r j_i$. 
		\end{enumerate}
	\end{theorem}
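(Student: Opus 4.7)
The plan is to follow the structure of the proofs in \cite[Sections 6 and 10]{Ku} for the full flag variety $\bar X$, adapting each geometric step to the parabolic setting $G/P$. The central ingredient is a Kleiman-Bertini transversality argument enabled by the mixing group $\Gamma$: since $\Gamma$ is connected (Lemma \ref{connected}) and acts on $\xyp{Y}$ with orbits exactly the products of Schubert cells $(C^P_u\times C^P_v)_\bb{P}$ (a consequence of Lemma \ref{fiber}), a generic translate $\gamma\cdot\tilde\Delta((X^P_w)_\bb{P})$ meets every stratum of $(X^u_P\times X^v_P)_\bb{P}$ in the expected position.

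For Part (A), I would first use the short exact sequence
\[0\to \widetilde{\xi^u_P\boxtimes \xi^v_P}\to \ss{(X^u_P\times X^v_P)_\bb{P}}\to \ss{\partial((X^u_P\times X^v_P)_\bb{P})}\to 0\]
to reduce Tor vanishing for $\widetilde{\xi^u_P\boxtimes\xi^v_P}$ to Tor vanishing against structure sheaves of unions of products of opposite Schubert varieties in the mixing space. I would then pass to the $T$-equivariant BSDH desingularization $Z_w\to X^P_w$ and the desingularizations $Z^u_w\to X^u_w(P)$ of Richardson varieties (via \cite[Theorem 6.8]{Ku}), so that each relevant intersection can be analyzed via a fiber product of the form $\zed$ (or its $Z$-level analog $\tzed$). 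Kleiman-Bertini, applied to the smooth connected group $\Gamma$ acting on each cell stratum uniformly over $\bb{P}$, then shows that for general $\gamma\in\Gamma$ every scheme-theoretic intersection of $\gamma\cdot\tilde\Delta((Z_w)_\bb{P})$ with a stratum of $(X^u_P\times X^v_P)_\bb{P}$ is transverse with rational singularities, which yields the required Tor vanishing through a standard local-to-global / Koszul argument.

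For Part (B), once Part (A) is in hand, the cohomology
\[H^p\bigl(\xyp{Y},\,\pi^*(\idshf{\bb{P}_\bj}{\partial \bb{P}_\bj})\otimes \widetilde{\xi^u_P\boxtimes\xi^v_P}\otimes\gamma_*\tilde{\Delta}_*(\ss{(X_w^P)_\bb{P}})\bigr)\]
reduces through the Leray spectral sequence for $\pi:\xyp{Y}\to\bb{P}$ to the cohomology on $\bb{P}$ of $\idshf{\bb{P}_\bj}{\partial\bb{P}_\bj}$ twisted by higher direct images of the transverse intersection sheaf. A Koszul computation shows that $\idshf{\bb{P}_\bj}{\partial\bb{P}_\bj}$ has cohomology concentrated in degree $|\bj|$, contributing that summand to the expected degree. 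Kawamata-Viehweg vanishing applied on $Z^u_w$—using rational singularities of Richardson varieties and an ample twist constructed from $\hat\rho_Y$—should concentrate the remaining higher direct images in degree $q=l(w)-(l(u)+l(v))$, giving the total degree $|\bj|+l(w)-(l(u)+l(v))$ claimed.

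The hardest step, in my estimation, is justifying the transversality together with the required control on singularities in Part (A) without having access to the clean identification of $\xi^u$ with a twist of $\omega_{X^u}$ that was used in the $G/B$ case. One has to work directly with the ideal-sheaf definition $\xi^u_P=\idshf{X^u_P}{\partial X^u_P}$ and systematically exploit rational singularities of Richardson varieties, the irreducibility of $X^u_w(P)$ (Lemma \ref{rich:props}), and the reducedness of all arising scheme-theoretic intersections (guaranteed by the compatible Frobenius splitting of $X_P$ recalled in Section \ref{notation}). A secondary challenge is that the $\bb{P}$-twisted mixing-space formulation forces one to run Kleiman-Bertini in families over $\bb{P}$ and to verify compatibility of $\hat\rho_Y$-based ample twists with this family structure when invoking Kawamata-Viehweg for Part (B).
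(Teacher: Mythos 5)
Your outline for part (A) goes off track at the decisive step. After the (reasonable) reduction via the short exact sequence for $\widetilde{\xi^u_P\boxtimes\xi^v_P}$, you propose to pass to the BSDH/Richardson desingularizations and invoke Kleiman--Bertini to get a ``transverse'' general translate, and then deduce the $\stor$-vanishing by a Koszul/local-to-global argument. This does not work: the $\Gamma$-action on $(Y_P)_\bb{P}$ is nowhere near transitive (its orbits are the cells $(C^P_x\times C^P_y)_\bb{P}$), so classical Kleiman transversality says nothing about a general translate of $\tilde{\Delta}((X^P_w)_\bb{P})$ beyond stratum-by-stratum statements; and even granting a geometrically transverse intersection of these singular, non-l.c.i.\ strata, transversality plus rational singularities does not produce vanishing of higher $\stor$'s of the specific sheaf $\pi^*(\idshf{\bb{P}_\bj}{\partial \bb{P}_\bj})\otimes(\widetilde{\xi^u_P\boxtimes\xi^v_P})$ against $\gamma_*\tilde{\Delta}_*\ss{(X_w^P)_\bb{P}}$ --- there is no Koszul resolution to run. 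The tool actually needed (and used) is the homological transversality theorem of Anderson--Griffeth--Miller \cite[Theorem 2.3]{AGM}: one restricts to $(Y^P_w)_\bb{P}$, where the action factors through the finite-dimensional connected group $\bar{\Gamma}$ and the orbit closures are exactly the $(X^P_x\times X^P_y)_\bb{P}$, $x,y\leq w$ (\Cref{fiber}); one checks that the relevant sheaf is homologically transverse to these orbit closures --- this is \Cref{prop:torvan} combined with the base-change isomorphism \eqref{isom:Tor} and K\"unneth --- and then AGM gives the vanishing for general $\gamma$. No desingularization enters part (A) at all, and your proposal contains no substitute for the AGM step.

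For part (B) the proposed route is also not viable as stated. The sheaf $\idshf{\bb{P}_\bj}{\partial\bb{P}_\bj}\simeq \ss{\bb{P}^{j_1}}(-1)\boxtimes\cdots\boxtimes\ss{\bb{P}^{j_r}}(-1)$ has vanishing cohomology in \emph{every} degree as soon as some $j_i\geq 1$, so it is not ``concentrated in degree $|\bj|$''; moreover the intersection with a general translate $\gamma\tilde{\Delta}((X^P_w)_\bb{P})$ is not a product family over $\bb{P}$, so the Leray-over-$\bb{P}$ bookkeeping splitting the expected degree as $|\bj|$ plus $l(w)-l(u)-l(v)$ has no basis. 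In the paper the number $|\bj|+l(w)-l(u)-l(v)$ arises as the \emph{dimension} of the general fiber $\bar{N}_\gamma=(X^{u,v}_P)_\bj\cap\gamma\tilde{\Delta}((X_w^P)_\bb{P})$ of the family $\pi:\mathcal{Z}_P\to\bar{\Gamma}$ (Corollary \ref{coro8.5}), and the statement is converted by Serre duality on the CM scheme $\bar{N}_\gamma$, together with the local-to-global Ext spectral sequence and the Ext-vanishing \eqref{eqn21}, into $H^p(\bar{N}_\gamma,\omega_{\bar{N}_\gamma}(\bar{M}_\gamma))=0$ for $p>0$. That vanishing requires the whole apparatus of Sections 7--10: irreducibility, normality, CM-ness and rational singularities of $\mathcal{Z}_P$ and CM-ness of $\partial\mathcal{Z}_P$ in pure codimension one; the nef and big line bundle on $(Z^{u,v}_w)_\bj$ supported on the boundary built from the section of $\mathcal{L}^P(\hat{\rho}_Y)$ (\Cref{sec:support}, \Cref{prop:nefbig}); relative Kawamata--Viehweg vanishing for $\tilde{\pi}:\widetilde{\mathcal{Z}}\to\bar{\Gamma}$ and the identification $f_*(\omega_{\widetilde{\mathcal{Z}}}(\partial\widetilde{\mathcal{Z}}))=\omega_{\mathcal{Z}_P}(\partial\mathcal{Z}_P)$ (\Cref{thm:kv}, \Cref{acyclic}), giving $R^p\pi_*(\omega_{\mathcal{Z}_P}(\partial\mathcal{Z}_P))=0$ (Corollary \ref{coro9.5}); and finally flatness over an open subset of $\bar{\Gamma}$ plus semicontinuity to descend to general fibers. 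You do name some correct ingredients ($\hat{\rho}_Y$-twists, rational singularities of parabolic Richardson varieties), but without the family over $\bar{\Gamma}$, the dimension/CM control of $\bar{N}_\gamma$, and the Serre-duality reduction, neither the location of the nonvanishing degree nor the vanishing itself is established.
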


Since $\Gamma$ is connected (cf. \Cref{connected}), we have the following result as an immediate corollary of \Cref{lemma:c}, \Cref{product}, and \Cref{main}.

\begin{corollary}\label{coeff:calts} $(-1)^{|\bj|+l(w)-(l(u)+l(v))}c_{u,v}^w(\bj)\in \bb{Z}_{\geq 0}$.
	\end{corollary}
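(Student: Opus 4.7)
The proof will be a direct assembly of the ingredients already in hand. The plan is to take the explicit formula for $c_{u,v}^w(\bj)$ provided by \Cref{lemma:c}, twist it by a general $\gamma\in\Gamma$ (which is harmless thanks to the connectedness of $\Gamma$), and then read off the sign from the degree of the unique nonvanishing cohomology group predicted by \Cref{main}.

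First I would invoke \Cref{connected}: since $\Gamma$ is connected, any $\gamma\in\Gamma$ is connected to the identity by a path, so the pushforward $\gamma_*$ induces the identity on $K_0((Y_P)_{\bb{P}})$. Hence in \Cref{lemma:c} we may replace $\tilde{\Delta}_*[\ss{(X_w^P)_\bb{P}}]$ by $\gamma_*\tilde{\Delta}_*[\ss{(X_w^P)_\bb{P}}]$ for any $\gamma\in\Gamma$, and in particular for the general $\gamma$ provided by \Cref{main}. Next, using \Cref{product}, I would rewrite the first argument of the pairing as the class of the actual sheaf tensor product $\pi^*(\idshf{\bb{P}_\bj}{\partial\bb{P}_\bj})\otimes(\widetilde{\xi^u_P\boxtimes\xi^v_P})$. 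Combining these gives
\begin{equation*}
c_{u,v}^w(\bj)=\sum_i(-1)^i\chi\bigl(\xyp{Y},\stor_i^{\ss{\xyp{Y}}}(\pi^*(\idshf{\bb{P}_\bj}{\partial\bb{P}_\bj})\otimes(\widetilde{\xi^u_P\boxtimes\xi^v_P}),\,\gamma_*\tilde{\Delta}_*\ss{(X_w^P)_\bb{P}})\bigr).
\end{equation*}

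Now I would apply \Cref{main}(A): for general $\gamma$ all higher $\stor$ sheaves vanish, so only the $i=0$ term survives and
\begin{equation*}
c_{u,v}^w(\bj)=\chi\bigl(\xyp{Y},\pi^*(\idshf{\bb{P}_\bj}{\partial\bb{P}_\bj})\otimes(\widetilde{\xi^u_P\boxtimes\xi^v_P})\otimes\gamma_*\tilde{\Delta}_*\ss{(X_w^P)_\bb{P}}\bigr)=\sum_p(-1)^p\dim H^p(\xyp{Y},\mathcal{F}),
\end{equation*}
where $\mathcal{F}$ denotes the sheaf in question. If $c_{u,v}^w(\bj)=0$, the claim is trivial. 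Otherwise, \Cref{main}(B) forces $H^p(\xyp{Y},\mathcal{F})=0$ for every $p\neq d:=|\bj|+l(w)-(l(u)+l(v))$, so the Euler characteristic collapses to a single term and
\begin{equation*}
(-1)^{d}c_{u,v}^w(\bj)=\dim H^{d}(\xyp{Y},\mathcal{F})\in\bb{Z}_{\geq 0},
\end{equation*}
which is exactly the claim.

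There is essentially no obstacle beyond what is already deferred to \Cref{main}: both the tor-vanishing (part A) and the concentration of cohomology in a single degree (part B) do all the real work, and the role of this corollary is simply to package them into the sign-alternation statement via the pairing formula of \Cref{lemma:c} and the identification supplied by \Cref{product}. The one subtle point worth flagging explicitly in the writeup is that the replacement of $\tilde{\Delta}_*[\ss{(X_w^P)_\bb{P}}]$ by its $\gamma$-translate at the level of $K_0$ is legitimate precisely because $\Gamma$ is connected—without that, the ``general $\gamma$'' manoeuvre of \Cref{main} could not be used to evaluate the $K$-theoretic coefficient $c_{u,v}^w(\bj)$.
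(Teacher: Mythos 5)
Your proposal is correct and is essentially the paper's own argument: the paper derives \Cref{coeff:calts} exactly by combining \Cref{lemma:c}, \Cref{product}, and both parts of \Cref{main}, with the connectedness of $\Gamma$ (\Cref{connected}) justifying the replacement of the diagonal class by its $\gamma$-translate in $K$-theory. Your handling of the $c_{u,v}^w(\bj)=0$ case and the collapse of the Euler characteristic to a single degree matches the intended reasoning, so there is nothing to add.
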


Recall the definition of the structure constants $d_{u,v}^w(P)\in R(T)$ (cf. \eqref{coeff:d}) for the product in $K^0_T(\bar{X}_P)$. 

\begin{lemma} \label{lemma5.4}
For any $u,v,w\in W^P$, $d^w_{u, v}(P)\in \mathbb{Z}[ (e^{-\alpha_1}-1), \dots, (e^{-\alpha_r}-1)]$.
\end{lemma}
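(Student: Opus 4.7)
The plan is to translate the mixing-space expansion \eqref{equivcoeff:c} back into the equivariant $K$-theory, thereby exhibiting $d^w_{u,v}(P)$ as an explicit integer-coefficient polynomial in the $x_i=e^{-\alpha_i}-1$. By \Cref{identify:qd}, $d^w_{u,v}(P)=q^w_{u,v}(P)$, so it suffices to work with the coproduct coefficients $q^w_{u,v}(P)$.

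The associated-bundle (Borel) construction gives a ring homomorphism $\Phi\colon R(T)\to K_0(\mathbb{P})$ sending $e^{-\alpha_i}\mapsto[\mathcal{O}(-1)_i]$ on the $i$-th factor (the sign convention being fixed by the twist $t\odot x=t^{-1}x$ in the definition of $(X_P)_\mathbb{P}$), together with a compatible morphism $K^T_0(Y_P)\to K_0((Y_P)_\mathbb{P})$ that sends $[\mathcal{O}_Z]\mapsto[\mathcal{O}_{Z_\mathbb{P}}]$ for $T$-invariant closed subschemes $Z$, commutes with proper pushforward, and converts the $R(T)$-action into multiplication by $\pi^*_{Y_P}\circ\Phi$. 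Applying this transfer to the equivariant identity $\Delta_*[\mathcal{O}_{X_w^P}]=\sum_{u,v}q^w_{u,v}(P)[\mathcal{O}_{X_u^P}]\otimes[\mathcal{O}_{X_v^P}]$ and comparing with \eqref{equivcoeff:c}, using the freeness of $K_0((Y_P)_\mathbb{P})$ over $K_0(\mathbb{P})$ with basis $\{[\mathcal{O}_{(X_u^P\times X_v^P)_\mathbb{P}}]\}_{u,v\in W^P}$ (the $Y_P$-analog of the freeness result in Section~4), the $(u,v)$-coefficients must match:
\[\Phi(q^w_{u,v}(P))=\sum_{\mathbf{j}\in[N]^r}c^w_{u,v}(\mathbf{j})\,[\mathcal{O}_{\mathbb{P}^{\mathbf{j}}}]\in K_0(\mathbb{P}).\]
Since $\mathbb{P}^{\mathbf{j}}=\prod_i\mathbb{P}^{N-j_i}$ has codimension $j_i$ in the $i$-th $\mathbb{P}^N$, we have $[\mathcal{O}_{\mathbb{P}^{\mathbf{j}}}]=\prod_i(1-[\mathcal{O}(-1)_i])^{j_i}=\Phi\bigl(\prod_i(-x_i)^{j_i}\bigr)$, so the right-hand side equals $\Phi(P_{u,v,w}(x))$ for the explicit integer polynomial $P_{u,v,w}(x):=\sum_{\mathbf{j}}c^w_{u,v}(\mathbf{j})\prod_i(-x_i)^{j_i}\in\mathbb{Z}[x_1,\ldots,x_r]$.

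The last step is to lift the identity $\Phi(q^w_{u,v}(P))=\Phi(P_{u,v,w}(x))$ back to $R(T)$ itself. This is the main obstacle: $\Phi$ has a large kernel (generated by $(1-e^{-\alpha_i})^{N+1}$), and a priori $q^w_{u,v}(P)\in R(T)$ could be a genuine Laurent polynomial involving positive powers of the $e^{\alpha_i}$. I would handle this via a stability argument combined with a direct polynomiality check: only finitely many $\mathbf{j}$ contribute non-trivially to \eqref{equivcoeff:c}, so $P_{u,v,w}(x)$ stabilizes for $N\gg 0$; and polynomiality of $q^w_{u,v}(P)$ in the $e^{-\alpha_i}$ (i.e.\ $q^w_{u,v}(P)\in\mathbb{Z}[e^{-\alpha_1},\ldots,e^{-\alpha_r}]$) can be established independently by replacing $[\mathcal{O}_{X_w^P}]$ with the pushforward of $[\mathcal{O}_{Z_w}]$ under the BSDH desingularization and iteratively pushing through the $\mathbb{P}^1$-bundle tower, at each step of which only non-positive powers of the simple roots appear. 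On this polynomial subring $\Phi$ becomes injective modulo arbitrarily high powers of $(1-e^{-\alpha_i})$ for $N$ large, forcing $q^w_{u,v}(P)=P_{u,v,w}(x)$ and giving $d^w_{u,v}(P)\in\mathbb{Z}[x_1,\ldots,x_r]$.
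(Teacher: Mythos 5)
There is a genuine gap, and it sits exactly where you yourself locate ``the main obstacle.'' First note that $\mathbb{Z}[(e^{-\alpha_1}-1),\dots,(e^{-\alpha_r}-1)]$ and $\mathbb{Z}[e^{-\alpha_1},\dots,e^{-\alpha_r}]$ are the \emph{same} subring of $R(T)$, so the lemma is precisely the assertion that $d^w_{u,v}(P)=q^w_{u,v}(P)$, a priori an arbitrary element of $R(T)$ (i.e.\ a Laurent expression in the $e^{\alpha_i}$), involves no positive powers of the $e^{\alpha_i}$. Your transfer to the mixing space can never detect this: $\Phi$ sends $e^{\alpha_i}$ to the invertible class $[\mathcal{O}(1)_i]$ and its kernel contains $(1-e^{-\alpha_i})^{N+1}$, so every element of $R(T)$ agrees, after applying $\Phi$, with some polynomial in the $x_i$; for instance $\Phi(e^{\alpha_1})=\Phi\bigl(\sum_{k=0}^{N}(1-e^{-\alpha_1})^{k}\bigr)$. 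Hence the identity $\Phi(q^w_{u,v}(P))=\Phi(P_{u,v,w}(x))$, for any (or all) $N$, is perfectly consistent with $q^w_{u,v}(P)$ containing positive powers of the $e^{\alpha_i}$, and no ``injectivity modulo high powers of $(1-e^{-\alpha_i})$'' can rescue this unless you already know $q^w_{u,v}(P)\in\mathbb{Z}[e^{-\alpha_1},\dots,e^{-\alpha_r}]$ --- which is exactly the statement of the lemma. In your write-up that membership is carried entirely by the one-sentence sketch about pushing $[\mathcal{O}_{Z_w}]$ through the BSDH $\mathbb{P}^1$-bundle tower, ``at each step of which only non-positive powers of the simple roots appear.'' That assertion is the real content and is not justified: pushforward along the $\mathbb{P}^1$-fibrations is governed by Demazure-type operators whose Schubert-basis coefficients are a priori only Laurent polynomials, and showing that only non-positive powers occur is a nontrivial result (essentially \cite[Lemma 4.12]{Ku}, going back to the argument in \cite{GK}), not a routine observation. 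There is also a circularity hazard: matching the $x$-expansion coefficients of $d^w_{u,v}(P)$ with the mixing-space coefficients $c^w_{u,v}(\mathbf{j})$ of \eqref{equivcoeff:c} is \Cref{lemma:offset}, which in the paper presupposes the present lemma.

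For comparison, the paper's proof is a short reduction: it quotes the parabolic-to-Borel summation formula $d^w_{u,v}(P)=\sum_{u'\in uW_P,\ v'\in vW_P} d^w_{u',v'}(B)$ of \cite[Proposition 3.5]{GK} (valid in the symmetrizable Kac-Moody case by the same proof) and then invokes \cite[Lemma 4.12]{Ku} for the $G/B$ structure constants. If you want to keep your route, the honest version is to prove the polynomiality of $q^w_{u,v}(P)$ in the $e^{-\alpha_i}$ directly (for example by reducing to $G/B$ as the paper does, or by genuinely redoing the $G/B$ argument for $P$); but once that is done the lemma follows immediately, and the whole $\Phi$-transfer becomes superfluous here --- it only re-derives the coefficient identification that the paper establishes separately in \Cref{lemma:offset}.
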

\begin{proof} By \cite[Proposition 3.5]{GK}, which is valid in the symmetrizable Kac-Moody case by the same proof, for any $u,v,w\in W^P$, 
\begin{equation} \label{eqn5.3} d^w_{u,v}(P)= \sum_{u'\in uW_P , \\ v'\in vW_P}\, d^w_{u', v'}(B).
\end{equation}
Now, $d^w_{u', v'}(B)\in  \mathbb{Z}[ (e^{-\alpha_1}-1), \dots, (e^{-\alpha_r}-1)]$ by \cite[Lemma 4.12]{Ku}. Hence, the lemma follows from the identity \eqref{eqn5.3}.
\end{proof}

The following lemma follows easily from the identity \eqref{equivcoeff:c}, \Cref{identify:qd}, \cite[Lemma 6.2]{GK} (which is valid in the Kac-Moody case as well) (see also \cite[\S 3]{AGM}).

\begin{lemma}\label{lemma:offset} For any $u,v,w\in W^P$, we can choose a large enough $N_1$ (depending on $u,v,w$) and write for any $N\geq N_1$ 
(cf. Lemma \ref{lemma5.4})
\[d_{u,v}^w(P) = \sum_{\bj} d_{u,v}^w(\bj) (e^{-\alpha_1}-1)^{j_1}\cdots (e^{-\alpha_r}-1)^{j_r}\] for some unique $d_{u,v}^w(\bj)\in\bb{Z}$, where $\bj=(j_1,\cdots, j_r) \in [N]^r$. Then \begin{equation}\label{coeff:offset} d_{u,v}^w(\bj) = (-1)^{|\bj|}c_{u,v}^w(\bj).
		\end{equation}
	\end{lemma}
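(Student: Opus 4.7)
The plan is to identify $d^w_{u,v}(\bj)$ and $c^w_{u,v}(\bj)$ as expansion coefficients of a single class in $K_0((Y_P)_\bb{P})$, by transferring the $T$-equivariant coproduct identity on $X_P$ to a non-equivariant identity on $(X_P)_\bb{P}$ via the Borel-type mixing construction.

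First, by \Cref{identify:qd}, $d^w_{u,v}(P) = q^w_{u,v}(P)$, where $q^w_{u,v}(P)$ is the $T$-equivariant coproduct coefficient defined by $\Delta_*[\ss{X_w^P}] = \sum_{u,v} q^w_{u,v}(P)\,[\ss{X_u^P}]\otimes[\ss{X_v^P}]$. The mixing functor $\mathcal{F}\mapsto E(T)_\bb{P}\times^T \mathcal{F}$ (with the twist $t\odot x=t^{-1}x$) is additive and compatible with proper pushforward and tensor product; applying it termwise to the coproduct identity yields in $K_0((Y_P)_\bb{P})$ the equality
\[
\tilde\Delta_*[\ss{(X_w^P)_\bb{P}}] = \sum_{u,v\in W^P} \pi^*_{Y_P}\bigl(\mu(q^w_{u,v}(P))\bigr)\cdot[\ss{(X_u^P\times X_v^P)_\bb{P}}],
\]
where $\mu\colon R(T)\to K^0(\bb{P})$ is the character map sending a $T$-representation to its associated line bundle class on $\bb{P}$ via the mixing. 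Comparing with the $\bb{Z}$-basis expansion \eqref{equivcoeff:c} reads off
\[
\mu(q^w_{u,v}(P)) = \sum_{\bj} c^w_{u,v}(\bj)\,[\ss{\bb{P}^\bj}] \in K^0(\bb{P}).
\]

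The second ingredient is a careful identification of $\mu$, provided by \cite[Lemma 6.2]{GK} (whose proof carries over to the Kac-Moody case). Thanks to the twist $t\odot x = t^{-1}x$ in the mixing, $\mu$ sends $e^{-\alpha_i}$ to the pullback of $\mathcal{O}_{\bb{P}^N}(-1)$ from the $i$-th factor of $\bb{P}$. The short exact sequence $0\to\mathcal{O}(-1)\to\mathcal{O}\to\ss{H}\to 0$ then yields $\mu(e^{-\alpha_i}-1) = -[\ss{\bb{P}^{N-1}_i}]$ in $K^0(\bb{P}^N)$, and taking products over $i$ (the intersection of transverse hyperplanes being the corresponding linear subspace):
\[
\mu\Bigl(\prod_{i=1}^r(e^{-\alpha_i}-1)^{j_i}\Bigr) \;=\; (-1)^{|\bj|}\,[\ss{\bb{P}^\bj}].
\]

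Finally, by \Cref{lemma5.4}, $d^w_{u,v}(P)$ is a polynomial in the $(e^{-\alpha_i}-1)$; take $N$ strictly larger than its degree in each variable so that $\mu$ is injective on polynomials of such bounded degree. Expanding $d^w_{u,v}(P)=\sum_\bj d^w_{u,v}(\bj)\prod_i(e^{-\alpha_i}-1)^{j_i}$, applying $\mu$, and matching against $\mu(q^w_{u,v}(P))=\sum_\bj c^w_{u,v}(\bj)[\ss{\bb{P}^\bj}]$ via the $\bb{Z}$-linear independence of $\{[\ss{\bb{P}^\bj}]\}_\bj$ in $K^0(\bb{P})$ yields $(-1)^{|\bj|}d^w_{u,v}(\bj)=c^w_{u,v}(\bj)$. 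The main subtlety is keeping sign conventions consistent: it is precisely the $t^{-1}$ twist in the mixing that forces $e^{-\alpha_i}$ (rather than $e^{\alpha_i}$) to correspond to $[\mathcal{O}(-1)]$, which is what produces the overall factor $(-1)^{|\bj|}$.
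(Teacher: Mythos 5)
Your argument is correct and follows essentially the same route as the paper, which deduces the lemma from \Cref{identify:qd}, the expansion \eqref{equivcoeff:c}, and the cited \cite[Lemma 6.2]{GK} (see also \cite[\S 3]{AGM}); you have simply unpacked that citation, transferring the coproduct identity through the mixing construction and computing the character map $\mu$ explicitly. Your sign bookkeeping — the twist forcing $e^{-\alpha_i}\mapsto[\mathcal{O}_i(-1)]$, hence $\mu\bigl(\prod_i(e^{-\alpha_i}-1)^{j_i}\bigr)=(-1)^{|\bj|}[\ss{\bb{P}^\bj}]$ — matches the conventions of the paper, so the identification $d_{u,v}^w(\bj)=(-1)^{|\bj|}c_{u,v}^w(\bj)$ is established exactly as intended.
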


The following main theorem of this paper is an immediate consequence of \Cref{coeff:calts} and \Cref{lemma:offset}, which was  proved in the $G/B$ case by Kumar \cite{Ku}. 

\begin{theorem} \label{mainthm} For any symmetrizable Kac-Moody group $G$ and parabolic subgroup $P$ of finite type,  and any $u,v,w\in W^P$, 
the structure constants in $K^0_T(\bar{X}_P)$ satisfy \begin{equation}(-1)^{l(u)+l(v)+l(w)} d_{u,v}^w(P) \in \bb{Z}_{\geq 0}[(e^{-\alpha_1}-1),\cdots,(e^{-\alpha_r}-1)].
		\end{equation}
	\end{theorem}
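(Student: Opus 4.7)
The statement is essentially a formal consequence of the two preceding results, so the proof plan is short and combinatorial rather than geometric.

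The plan is to combine Corollary \ref{coeff:calts} with Lemma \ref{lemma:offset} and read off the sign. First, I would fix $u,v,w \in W^P$ and choose $N$ large enough (depending on this triple) so that Lemma \ref{lemma:offset} applies, giving the finite expansion
\[
d^w_{u,v}(P) = \sum_{\bj \in [N]^r} d^w_{u,v}(\bj)\,(e^{-\alpha_1}-1)^{j_1}\cdots (e^{-\alpha_r}-1)^{j_r}, \qquad d^w_{u,v}(\bj) \in \mathbb{Z},
\]
with the key identity $d^w_{u,v}(\bj) = (-1)^{|\bj|} c^w_{u,v}(\bj)$.

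Next, I would apply Corollary \ref{coeff:calts}, which asserts
\[
(-1)^{|\bj| + l(w) - (l(u)+l(v))} c^w_{u,v}(\bj) \in \mathbb{Z}_{\geq 0}.
\]
Substituting $c^w_{u,v}(\bj) = (-1)^{|\bj|} d^w_{u,v}(\bj)$ from Lemma \ref{lemma:offset} and observing that $(-1)^{2|\bj|} = 1$, this simplifies to
\[
(-1)^{l(w) - (l(u)+l(v))} d^w_{u,v}(\bj) \in \mathbb{Z}_{\geq 0}.
\]
Since $(-1)^{l(w)-(l(u)+l(v))} = (-1)^{l(u)+l(v)+l(w)}$ (the exponents differ by $2(l(u)+l(v))$), each individual coefficient satisfies $(-1)^{l(u)+l(v)+l(w)} d^w_{u,v}(\bj) \geq 0$.

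Finally, I would multiply the expansion of $d^w_{u,v}(P)$ by the global sign $(-1)^{l(u)+l(v)+l(w)}$, which is independent of $\bj$, to conclude
\[
(-1)^{l(u)+l(v)+l(w)} d^w_{u,v}(P) = \sum_{\bj} \bigl((-1)^{l(u)+l(v)+l(w)} d^w_{u,v}(\bj)\bigr)\,(e^{-\alpha_1}-1)^{j_1}\cdots (e^{-\alpha_r}-1)^{j_r},
\]
a polynomial with nonnegative integer coefficients in the variables $x_i = e^{-\alpha_i}-1$, which is precisely the claim. There is no real obstacle at this stage; all the hard geometric input (the Tor-vanishing and cohomology vanishing in Theorem \ref{main}, and the identification of the mixing-space coefficients with the Taylor coefficients in the $x_i$) has already been carried out earlier in the paper, so this final theorem is just a bookkeeping step.
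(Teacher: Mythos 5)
Your proposal is correct and follows exactly the paper's route: the paper also deduces Theorem \ref{mainthm} as an immediate consequence of Corollary \ref{coeff:calts} and Lemma \ref{lemma:offset}, and your sign bookkeeping $(-1)^{l(w)-l(u)-l(v)} = (-1)^{l(u)+l(v)+l(w)}$ together with $d^w_{u,v}(\bj) = (-1)^{|\bj|} c^w_{u,v}(\bj)$ is precisely the computation left implicit there. Nothing is missing.
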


\section{Proof of part (A) of \Cref{main}}

\begin{proof}[Proof of Theorem 5.1(A)] Note that since the assertion is local in $\bb{P}$, we assume $\xyp{Y}\simeq \bb{P}\times\bar{Y}_P$. Then,
	\begin{align}\pi^*\idshf{\bb{P}_\bj}{\partial\bb{P}_\bj}\simeq  \idshf{\bb{P}_\bj}{\partial\bb{P}_\bj}\boxtimes \ss{\bar{Y}_P},\label{identity:idshf}\\
	\widetilde{\xi^u_P\boxtimes\xi^v_P}\cong \ss{\bb{P}}\boxtimes (\xi^u_P\boxtimes\xi^v_P),\label{identity:tilde}\\
		\ss{(X_w^P\times X_w^P)_\bb{P}}\simeq \ss{\bb{P}}\boxtimes (\ss{X_w^P}\boxtimes\ss{X_w^P}).\label{identity:schub}
		\end{align}
	We first show that for any $\ss{(Y_w^P)_\bb{P}}$-module $\mathcal{S}$ (where $(Y_w^P)_\bb{P}:=(X_w^P\times X_w^P)_\bb{P}$), 
	\begin{multline}\label{identity:stors}\stor_i^{\ss{\xyp{Y}}}(\pi^*(\idshf{\bb{P}_\bj}{\partial \bb{P}_\bj})\otimes(\widetilde{\xi^u_P\boxtimes\xi^v_P}),\mathcal{S})\\\simeq \stor_i^{\ss{(Y_w^P)_\bb{P}}}(\ss{(Y_w^P)_\bb{P}}\otimes_{\ss{\xyp{Y}}} \left(\pi^*(\idshf{\bb{P}_\bj}{\partial \bb{P}_\bj})\otimes(\widetilde{\xi^u_P\boxtimes\xi^v_P})\right),\mathcal{S}).
		\end{multline}
	To prove \eqref{identity:stors}, we observe the following: Let $R,S$ be commutative rings with a ring homomorphism $R\to S$, $M$ an $R$-module and $N$ an $S$-module. Then $N\otimes_S(S\otimes_R M) \simeq N\otimes_R M$. This gives the following isomorphism, provided $\textup{Tor}_j^R(S,M) = 0$ for all $j>0$:
	 \begin{equation}\label{isom:Tor} \textup{Tor}^R_i(M,N)\simeq \textup{Tor}^S_i(S\otimes_R M,N).
		\end{equation}
	Then \eqref{identity:stors} follows with $R = \ss{\xyp{Y}},\ S = \ss{(Y_w^P)_\bb{P}},\ M=\pi^*(\idshf{\bb{P}_\bj}{\partial \bb{P}_\bj})\otimes(\widetilde{\xi^u_P\boxtimes\xi^v_P}),$ and $N=\mathcal{S}$, provided that $\stor_j^R(S,M) = 0$ for all $j>0$. But this follows from the Kunneth formula, and  the isomorphisms \eqref{identity:idshf}	- \eqref{identity:schub} together with \Cref{prop:torvan}. 
	
	By \Cref{prop:torvan} and isomorphism \eqref{isom:Tor} applied to $R = \ss{\bar{X}_P},\ S = \ss{X_w^P},\ M = \xi^u_P,$ and $N = \ss{X_x^P}$ (for $x\leq w, x\in W^P$), we have 
	\begin{equation}\label{storvan} \stor_j^{\ss{X_w^P}}(\ss{X_w^P}\otimes_{\ss{\bar{X}_P}}\xi^u_P,\ss{X_x^P}) = 0\hspace{1cm} \text{for all}\ x\leq w,\ j>0.
		\end{equation}
	
	By \Cref{fiber}, the closures of the $\Gamma$-orbits in $(Y_w^P)_\bb{P}$ are precisely  $(X_x^P\times X_y^P)_\bb{P}$ for $x,y\leq w$ and $x,y\in W^P$. Setting $\mathcal{F}: = \ss{(Y_w^P)_\bb{P}}\otimes_{\ss{\xyp{Y}}}(\pi^*\idshf{\bb{P}_\bj}{\partial\bb{P}_\bj}\otimes (\widetilde{\xi^u_P\boxtimes\xi^v_P}))$, the identities \cref{identity:idshf,identity:tilde,identity:schub} and \eqref{storvan} imply that $\mathcal{F}$ is homologically transverse to the closures of the $\Gamma$-orbits in $(Y_w^P)_\bb{P}$. Then applying \cite[Theorem 2.3]{AGM} (with their $G=\Gamma, X=(Y^P_w)_\bb{P}, \mathcal{E}= \tilde{\Delta}_*\ss{(X_w^P)_\bb{P}}$ and their $\mathcal{F}= \mathcal{F}$), we have the following:
	\begin{equation}\label{bigstorvan}\stor_i^{\ss{(Y_w^P)_\bb{P}}}(\ss{(Y_w^P)_\bb{P}}\otimes_{\ss{\xyp{Y}}} (\pi^*(\idshf{\bb{P}_\bj}{\partial \bb{P}_\bj})\otimes(\widetilde{\xi^u_P\boxtimes\xi^v_P})) ,\gamma_*\tilde{\Delta}_*\ss{(X_w^P)_\bb{P}}) = 0\hspace{1cm} \text{for all}\ i>0.
		\end{equation}
	(We note here that although $\Gamma$ is infinite-dimensional, its action on $(Y_w^P)_\bb{P}$ factors through a finite-dimensional quotient group $\bar{\Gamma}$ of $\Gamma$.)	Finally, observe that $\gamma(\tilde{\Delta}(X_w^P)_\bb{P})\subset (Y_w^P)_\bb{P}$, and thus \cref{identity:stors} and \cref{bigstorvan} imply
	\begin{equation*}\stor_i^{\ss{\xyp{Y}}}(\pi^*(\idshf{\bb{P}_\bj}{\partial \bb{P}_\bj})\otimes(\widetilde{\xi^u_P\boxtimes\xi^v_P}),\gamma_*\tilde{\Delta}_*(\ss{(X_w^P)_\bb{P}}))=0\hspace{1cm} \text{for all}\ i>0.\qedhere
		\end{equation*}
	This proves  \Cref{main}(A). 	
	\end{proof}

\section{Study of the Richardson Varieties}\label{rich}

We continue to assume that $N$ is any fixed positive integer and $\mathbb{P} =(\mathbb{P}^N)^r$ is as in Section \ref{mixing}.

For any $u,v\leq w$ in $W^P$ set $X^{u,v}_w:=X^u_w\times X^v_w$ where $X^u_w:=X^u\cap X_w$. Similarly, we set $X^{u,v}_w(P):=X^u_w(P)\times X^v_w(P)$ where $X^u_w(P):=X^u_P\cap X_w^P$. We also write $X^2_w(P):=X_w^P\times X_w^P$ and from now on we denote by $Y_\bj$ (for any $T$-stable subscheme $Y\subset \bar{X}_P$) the inverse image of $\bb{P}_\bj$ under the standard quotient map $E(T)_\bb{P}\times^T Y\to \bb{P}$. 

The action of $B$ on $X_w^P$ factors through the action of a finite dimensional quotient group $\bar{B}=B_w$ containing the maximal torus $H$. Similarly, the action of $\Gamma$ on $(X_w^2(P))_\bb{P}$ descends to an action of a finite dimensional quotient group $\bar{\Gamma} = \Gamma_w$: \[\Gamma\twoheadrightarrow \bar{\Gamma}=\Gamma_w\twoheadrightarrow GL(N+1)^r.\] Further, we can (and do) take $\bar{\Gamma} = \bar{\Gamma}_0 \rtimes GL(N+1)^r$, where $\bar{\Gamma}_0$ is the group of global sections of the bundle $(\bar{B}^2)_\bb{P}:=E(T)_\bb{P}\times^T \bar{B}^2\to \bb{P}$. 

\begin{lemma}\label{map:m} For any $\bj=(j_1,\cdots, j_r)\in[N]^r$ and $u,v\leq w\in W^P$, the map \[m:\bar{\Gamma}\times(X^{u,v}_w(P))_\bj\to (X^2_w(P))_{\bb{P}}\] is flat, where $m(\gamma,x) = \gamma\cdot\pi_2(x)$ and $\pi_2:(X_w^{u,v}(P))_\bj\to (X^2_w(P))_\bb{P}$ is the map induced from the inclusion $p:X^u_w(P)\times X^v_w(P)\to X_w^2(P)$. Similarly, its restriction $\hat{m}:\bar{\Gamma}\times\partial((X^{u,v}_w(P))_\bj)\to (X^2_w(P))_\bb{P}$ is also flat if the domain of $\hat{m}$ is non-empty, where \[\partial((X^{u,v}_w(P))_\bj):=((\partial X^{u,v}_P)\cap (X^2_w(P)))_\bj\cup (X^{u,v}_w(P))_{\partial\bb{P}_\bj},\]  $(X^{u,v}_w(P))_{\partial\bb{P}_\bj}$ denotes the inverse image of $\partial\bb{P}_\bj$ under the quotient map $E(T)_\bb{P}\times^T (X^{u,v}_w(P))\to \bb{P}$ and $\partial X^{u,v}_P:=(\partial X^u_P\times X^v_P)\cup (X^u_P\times \partial X^v_P)$.
\end{lemma}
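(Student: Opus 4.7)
The plan is to invoke the miracle flatness theorem: a morphism between equidimensional Cohen--Macaulay schemes whose nonempty fibers are all equidimensional of the expected dimension $\dim(\textup{source}) - \dim(\textup{target})$ is flat.

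For the Cohen--Macaulay hypothesis, Richardson varieties have rational singularities (cf.\ Proposition~\ref{rich:props}), hence are CM; the product $X^{u,v}_w(P)$ is CM; the associated bundle restricted over the smooth base $\bb{P}_\bj$ is CM; and multiplying by the smooth scheme $\bar{\Gamma} = \bar{\Gamma}_0 \rtimes GL(N+1)^r$ preserves CM. So the source $\bar{\Gamma} \times (X^{u,v}_w(P))_\bj$ is CM, and similarly the target $(X^2_w(P))_\bb{P}$ is CM.

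For the fiber--dimension analysis, the map $m$ is $\bar{\Gamma}$--equivariant (with $\bar{\Gamma}$ acting on the source by left multiplication on the first factor), so fiber dimensions are constant on each $\bar{\Gamma}$--orbit of the target; by Lemma~\ref{fiber} combined with the transitivity of $GL(N+1)^r$ on $\bb{P}$, these orbits are exactly $(C^P_x \times C^P_y)_\bb{P}$ for $x, y \leq w$ in $W^P$. Fix a representative $y_0 = [e, (x_0, y_0')]\in (C^P_x\times C^P_y)_\bb{P}$. Since $\pi_2$ is a locally closed immersion, the fiber $m^{-1}(y_0)$ embeds in $\bar{\Gamma}$ as
\[
\{(\gamma_0, g) \in \bar{\Gamma}_0 \rtimes GL(N+1)^r : g^{-1}\bar{e} \in \bb{P}_\bj, \; \gamma_0(e)^{-1}(x_0, y_0') \in X^u_w(P) \times X^v_w(P)\}.
\]
The first condition cuts out a smooth subvariety of dimension $\dim GL(N+1)^r - rN + |\bj|$ (since $g \mapsto g^{-1}\bar{e}$ is a smooth surjection $GL(N+1)^r \twoheadrightarrow \bb{P}$). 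For the second condition, the evaluation $\bar{\Gamma}_0 \to \bar{B}^2$, $\gamma_0 \mapsto \gamma_0(e)$, is a surjective group homomorphism by Lemma~\ref{fiber} with kernel of dimension $\dim\bar{\Gamma}_0 - 2\dim\bar{B}$, and for $x_0 \in C^P_x$ the locus $\{b \in \bar{B}: b^{-1}x_0 \in X^u_w(P)\}$ is the preimage of $X^u_w(P) \cap C^P_x$ under the surjection $\bar{B} \twoheadrightarrow C^P_x$, $b \mapsto b^{-1}x_0$. Using $\dim(X^u_w(P)\cap C^P_x) = l(x)-l(u)$ (from the Deodhar--type stratification, cf.\ \cite[Lemma~4.3]{Deo}), this locus has dimension $\dim\bar{B} - l(u)$, \emph{independently of $x$}. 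Summing the three contributions gives
\[
\dim m^{-1}(y_0) = \dim\bar{\Gamma} - l(u) - l(v) + |\bj| - rN,
\]
which is the expected dimension and is independent of the orbit of $y_0$. Over orbits with $u \not\leq x$ or $v \not\leq y$ the fiber is empty (trivially flat), so the image of $m$ is the open $\bar{\Gamma}$--invariant subset $\bigcup_{u \leq x,\, v \leq y}(C^P_x \times C^P_y)_\bb{P}$, and miracle flatness gives flatness of $m$.

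For $\hat{m}$, the boundary $\partial((X^{u,v}_w(P))_\bj)$ decomposes as a union of codimension-one strata of the form $(X^{u',v}_w(P))_\bj$ and $(X^{u,v'}_w(P))_\bj$ (for $u', v'$ covering $u, v$ in Bruhat order), and $(X^{u,v}_w(P))_{\bj-e_i}$ (from $\partial\bb{P}_\bj = \bigcup_i \bb{P}_{\bj-e_i}$). Each such stratum is of Richardson--bundle form, hence CM, and the fiber--dimension calculation above applies with the corresponding shift in parameters to show that $m$ restricted to $\bar{\Gamma}$ times any one stratum is flat with the expected (reduced-by-one) fiber dimension. Flatness of $\hat{m}$ on the union then follows by iterated Mayer--Vietoris: for each decomposition $X = U \cup V$ along the closed intersection $U \cap V$ (itself of Richardson--bundle type, hence CM by the compatible Frobenius splitting of $\bar{X}_P$ recorded in Section~\ref{notation}, which guarantees that all scheme--theoretic intersections of the relevant subschemes are reduced), the short exact sequence $0 \to \ss{U \cup V} \to \ss{U} \oplus \ss{V} \to \ss{U \cap V} \to 0$ together with the flatness of $\ss{U}, \ss{V}, \ss{U \cap V}$ over the target propagates flatness to $\ss{U \cup V}$.

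The principal obstacle I foresee is the fiber--dimension computation, specifically the \emph{orbit-independent} formula $\dim\{b \in \bar{B}: b^{-1}x_0 \in X^u_w(P)\} = \dim\bar{B} - l(u)$, which relies on the precise Deodhar--type analysis of $X^u_P \cap C^P_x$ for all $u \leq x \leq w$ in $W^P$. The boundary case adds the combinatorial bookkeeping of tracking multiple codimension-one strata and verifying CM--ness and the expected fiber dimension on each higher intersection that arises in the Mayer--Vietoris recursion.
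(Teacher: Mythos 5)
There is a genuine gap at the core of your argument: the flatness criterion you invoke is misstated. Miracle flatness requires the \emph{target} to be regular, not merely Cohen--Macaulay: if $f\colon X\to Y$ has $X$ CM and all nonempty fibers of dimension $\dim X-\dim Y$, flatness follows only when $Y$ is regular. Over a CM but singular target the conclusion is simply false; for instance the quotient map $\bb{A}^2\to \bb{A}^2/\{\pm 1\}$ is a finite surjection from a smooth surface onto the CM (even rationally singular) quadric cone, with all fibers zero-dimensional, yet it is not flat (the fiber over the vertex has length $3$, not $2$). In the present lemma the target $(X^2_w(P))_{\bb{P}}$ is a bundle over $\bb{P}$ with fiber $X^P_w\times X^P_w$, which is singular for general $w$, so rational singularities/CM-ness of the target plus your (essentially correct) equidimensionality count for the fibers does not yield flatness. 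This is precisely the delicate point of the lemma, and the paper does not argue by a dimension criterion over the singular target: it uses the local triviality of the right-hand fibrations over $\bb{P}$, the smoothness of $GL(N+1)^r\times\bb{P}_\bj\to\bb{P}$, and the surjectivity of the evaluation map of $\Gamma_0$ (Lemma \ref{fiber}) to reduce everything to flatness of the single multiplication map $\bar{B}^2\times X^{u,v}_w(P)\to X^2_w(P)$, and that flatness is imported from the proof of \cite[Lemma 6.10]{Ku} adapted to $\bar{X}_P$ --- which is exactly where the real work over the singular product of Schubert varieties is done. Your proposal contains no substitute for that step.

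The remainder of your outline is in the right spirit and partly parallels the paper: the orbit analysis and the Deodhar-type count $\dim\bigl(X^u_w(P)\cap C^P_x\bigr)=l(x)-l(u)$ correctly show the fibers are equidimensional of the expected dimension over the open image, and your treatment of $\hat m$ --- flatness of each boundary piece together with the exact sequence $0\to \ss{Y_1\cup Y_2}\to \ss{Y_1}\oplus\ss{Y_2}\to \ss{Y_1\cap Y_2}\to 0$, with reducedness of the intersections supplied by Frobenius splitting --- is the same mechanism the paper uses (the paper keeps just two pieces, $(\partial X^{u,v}_P\cap X^2_w(P))_\bj$ and $(X^{u,v}_w(P))_{\partial\bb{P}_\bj}$, rather than a full stratification, which shortens the bookkeeping). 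But all of these reductions only propagate flatness of the individual pieces; since each such piece is again established in your write-up by the invalid CM-target miracle-flatness claim, the gap propagates with them.
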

\begin{proof} Consider the following diagram:
	\begin{center}	\begin{tikzcd} \bar{\Gamma}_0\times X^{u,v}_w(P)\arrow[r]\arrow[d,"m'"] & \bar{\Gamma}\times (X^{u,v}_w(P))_\bj \arrow[r]\arrow[d,"m"] & \text{GL}(N+1)^r\times \bb{P}_\bj\arrow[d,"m''"]\\
			X^2_w(P)\arrow[r] & (X^2_w(P))_\bb{P}\arrow[r] & \bb{P}
	\end{tikzcd}\end{center}
	Here the two right horizontal maps are locally trivial fibrations with fibers in the leftmost spaces. From this and the fact that $m''$ is smooth (see the proof of \cite[Lemma 6.11]{Ku}), to show that $m$ is flat it is enough to show that $m'$ is a flat morphism. From \Cref{fiber}, it suffices to show that $(\bar{B}^2)\times X^{u,v}_w(P)\to X^2_w(P)$ is flat. This follows from the proof  of \cite[Lemma 6.10]{Ku} for $\bar{X}_P$. 
	
	Observe also that, by the same proof as that of \cite[Lemma 6.10]{Ku},  the map $(\bar{B}^2)\times ((\partial X^{u,v}_P)\cap X^2_w(P))\to X^2_w(P)$ is flat. Hence to show that $\hat{m}$ is flat, we first observe that the restrictions to the components $\Gamma_1:=\bar{\Gamma}\times((\partial X^{u,v}_P)\cap X^2_w(P))_\bj$, $\Gamma_2:=\bar{\Gamma}\times (X^{u,v}_w(P))_{\partial\bb{P}_\bj}$, and $\Gamma_1\cap \Gamma_2$ are all flat maps (following the same argument as the first part of this proof). Therefore $\hat{m}$ is flat on $\Gamma_1\cup\Gamma_2$, since for any affine scheme $Y=Y_1\cup Y_2$ with closed subschemes $Y_1,Y_2$ and a morphism $f:Y\to X$ of schemes, there is an exact sequence of $k[X]$-modules: 
	\[0\to k[Y]\to k[Y_1]\oplus k[Y_2]\to k[Y_1\cap Y_2]\to 0.\qedhere\]
\end{proof}

As in Section \ref{notation}, for $v\leq w\in W^P$, let 
$$\pi^v_w: Z^v_w\to X^v_w(P) $$
be the $T$-equivariant desingularization of $X^v_w(P)$. For $u,v \leq  w\in W^P$, let $Z^{u,v}_{w}:=Z^{u}_{w} \times Z^{v}_{w}$ under the
diagonal action of $T$ and  $\left(Z^{u,v}_{w}\right)_{\bj}$ is the inverse image
of $\bb{P}_{\bj}$ under the map $E(T)_\bb{P}
\displaystyle\mathop{\times}^{T} Z^{u,v}_w \to \bb{P}$. 

\vskip1ex

We  record the following from \cite[Lemma 6.11]{Ku}:

\begin{lemma} For any $\bj\in[N]^r$ and $u,v\leq w\in W^P$, the map \[\tilde{m}:\bar{\Gamma}\times (Z^{u,v}_w)_\bj\to (Z^2_w)_\bb{P}\] is a smooth morphism, where $\tilde{m}$ is defined similarly to the map $m$ in \Cref{map:m}. 
\end{lemma}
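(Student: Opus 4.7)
The plan is to observe that this statement is, in effect, a direct citation of \cite[Lemma 6.11]{Ku}. The crucial point is that the BSDH resolution $Z_w$ and the $T$-equivariant desingularization $Z_w^u$ of $X_w^u$ are defined intrinsically in terms of $w,u\in W$ and the full flag variety $\bar{X}$; the parabolic $P$ does not enter their construction. The auxiliary fact, recorded in \Cref{notation}, that $Z_w^u$ also desingularizes the Richardson variety $X_w^u(P)\subset \bar{X}_P$ via the projection $X_w^u\to X_w^u(P)$ does not alter $Z_w^u$ itself. Likewise, the $B$-action on $Z_w$ factors through a finite-dimensional quotient $\bar{B}$, and the mixing group $\bar{\Gamma}$ acting on $(Z_w^2)_\bb{P}$ coincides with the one in \cite{Ku}. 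Hence the source, target, and formula defining $\tilde{m}$ match those of \cite[Lemma 6.11]{Ku} verbatim.

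Should one wish to redo the argument, the natural approach is to mirror the proof of \Cref{map:m}, forming the diagram
\begin{center}
\begin{tikzcd}
\bar{\Gamma}_0 \times Z_w^{u,v} \ar[r]\ar[d,"\tilde{m}'"] & \bar{\Gamma}\times (Z_w^{u,v})_\bj \ar[r]\ar[d,"\tilde{m}"] & GL(N+1)^r\times\bb{P}_\bj\ar[d,"m''"]\\
Z_w^2\ar[r] & (Z_w^2)_\bb{P} \ar[r] & \bb{P}
\end{tikzcd}
\end{center}
in which the right-hand horizontal maps are locally trivial fibrations whose fibers are the leftmost spaces, and $m''$ is smooth (as in the proof of \Cref{map:m}). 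Since smoothness descends through such fibrations on the target, it suffices to show $\tilde{m}'$ is smooth. Invoking \Cref{fiber}, this reduces to smoothness of $\bar{B}^2\times Z_w^{u,v}\to Z_w^2$, which by the product structure of the $\bar{B}\times\bar{B}$-action on $Z_w\times Z_w$ reduces further to smoothness of the single map $\bar{B}\times Z_w^u\to Z_w$.

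This last assertion is precisely the geometric content of \cite[Lemma 6.11]{Ku}: all spaces involved are smooth, $Z_w^u$ is $\bar{B}$-stable, and the $\bar{B}$-orbit structure on $Z_w$ is compatible enough that the action map is smooth (one verifies this by a short tangent-space/dimension calculation, or equivalently via the criterion that a surjection between smooth varieties with equidimensional fibers is smooth). The principal (non-)obstacle here is really only verifying that no parabolic adjustment is needed; once that observation is made, the conclusion transfers from \cite{Ku} with no new geometric input required.
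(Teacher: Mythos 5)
Your proposal matches the paper exactly: the paper gives no independent proof and simply records this statement from \cite[Lemma 6.11]{Ku}, which applies verbatim because $Z_w$, $Z^{u}_w$, $Z^{v}_w$ and hence $(Z^{u,v}_w)_\bj$ and $(Z^2_w)_\bb{P}$ are constructed from the full flag variety and are unchanged by the passage to $\bar{X}_P$. Your additional sketch mirroring the proof of Lemma \ref{map:m} is a reasonable reconstruction of Kumar's argument but is not needed beyond the citation.
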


\begin{lemma} \label{prop:norm} For any $u\in W^P$, $X^u_P$ is normal and Cohen-Macaulay.
	\end{lemma}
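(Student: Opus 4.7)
The plan is to deduce both properties of $X^u_P$ from the corresponding known statement that $X^u\subset\bar{X}$ is normal and Cohen--Macaulay in the $G/B$ case (established in the symmetrizable Kac--Moody setting by Kashiwara and Kashiwara--Shimozono; see also \cite{KS,KuS}), by descending along the smooth projection $\pi\colon \bar{X}\to \bar{X}_P$.

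The first step is to identify $\pi^{-1}(X^u_P)=X^u$ as closed subschemes of $\bar{X}$ for $u\in W^P$. Set-theoretically, $\pi$ is $B^-$-equivariant and sends each orbit $C^w$ onto $C^{w^P}_P$, where $w=w^Pw_P$ is the factorization with $w^P\in W^P$ and $w_P\in W_P$. Hence $\pi^{-1}(C^v_P)=\bigsqcup_{x\in W_P}C^{vx}$ for $v\in W^P$. Combined with the classical Bruhat-order compatibility that for $u\in W^P$ one has $w\geq u$ iff $w^P\geq u$, this yields $\pi^{-1}(X^u_P)=X^u$ on underlying sets. Because $\pi$ is smooth and $X^u_P$ is reduced by definition, the scheme-theoretic preimage is automatically reduced, so the equality holds scheme-theoretically.

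The second step is descent. The projection $\pi$ is a Zariski-locally trivial smooth fibration with fiber $P/B$; explicit trivializations can be given over the cover $\{wU^-P/P\}_{w\in W^P}$ of $\bar{X}_P$, with corresponding trivializations $wU^-P/B$ upstairs. Restricting to $X^u$, the map $\pi_u:=\pi|_{X^u}\colon X^u\to X^u_P$ inherits this structure, and is in particular faithfully flat with smooth (hence geometrically regular and Cohen--Macaulay) fibers isomorphic to $P/B$. Now a standard faithfully flat descent argument applies: normality of $X^u$ together with geometrically regular fibers of $\pi_u$ forces normality of $X^u_P$, and Cohen--Macaulayness of $X^u$ together with Cohen--Macaulay fibers forces Cohen--Macaulayness of $X^u_P$, via the additivity of depth and of dimension along a flat local homomorphism with Cohen--Macaulay closed fiber.

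The main subtlety is the clean scheme-theoretic identification $\pi^{-1}(X^u_P)=X^u$, which rests on the Bruhat-order compatibility with the factorization $w=w^Pw_P$; once this is in place, the remainder is formal commutative-algebra descent. A secondary technical point is simply invoking normality and Cohen--Macaulayness of $X^u\subset\bar{X}$ in the symmetrizable Kac--Moody setting, which is classical in the finite case and available in general from the references above.
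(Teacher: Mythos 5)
Your proposal is correct and follows essentially the same route as the paper: both reduce to normality and Cohen--Macaulayness of $X^u\subset\bar{X}$ (via Kashiwara--Shimozono) and descend along the locally trivial fibration $X^u\to X^u_P$ with fiber $P/B$, the paper phrasing this via the local product structure $X^u\simeq X^u_P\times\mathbb{A}^n$ (and, as an alternative, the same flat-descent lemmas you invoke). Your extra care with the scheme-theoretic identification $\pi^{-1}(X^u_P)=X^u$ is fine but not a point of divergence.
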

\begin{proof} The standard projection map $p:G/B\to G/P$ is a locally trivial fibration with fibers isomorphic to the finite dimensional flag variety $P/B$ (since, by assumption, $P$ is of finite type). By \cite[Propositions 3.2 and 3.4]{KS} the lemma follows for $X^u$. Hence, the lemma follows for $X^u_P$ since $X^u\to X^u_P$ is a locally trivial fibration with fiber $P/B$. Thus, locally $X^u$ is isomorphic with $X^u_P\times \mathbb{A}^n$, for $n=\dim P/B$. See \cite[Lemma 10.164.3]{stack} for normality and  \cite[Ch. II, Theorem 8.21A.(d)]{H} for the Cohen-Macaulay property. 
	\end{proof}

\begin{lemma}\label{rich:props} For any symmetrizable Kac-Moody  group $G$ and any $u\leq w$ in $W^P$, the Richardson variety $X^u_w(P)\subset \bar{X}_P$ is irreducible, normal, and Cohen-Macaulay with rational singularities. Moreover, $C_w^P\cap C^u_P$ is an open dense subset of $X^u_w(P)$. 
\end{lemma}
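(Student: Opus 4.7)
The approach is to reduce everything to the analogous results for Richardson varieties $X^u_{ww_o^P}\subset \bar{X}$ via the canonical projection $\pi:\bar{X}\to\bar{X}_P$. Since $P$ is of finite type, $\pi$ is a Zariski-locally trivial fibration with smooth projective fiber $P/B$, and in particular is smooth.

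First I would record the identity
\[\pi^{-1}(X^u_w(P))=X^u_{ww_o^P},\]
where $w_o^P$ is the longest element of the finite Weyl group $W_P$; this is the identity already invoked in the proof of \Cref{prop:finunion}. Granting the standard $\pi^{-1}(X_w^P)=X_{ww_o^P}$, the identity reduces to checking that $\pi^{-1}(X^u_P)\cap X_{ww_o^P}=X^u\cap X_{ww_o^P}$. The inclusion $\supseteq$ is immediate. For $\subseteq$, a point $gB$ on the left satisfies $gP\in C^{u'}_P$ for some $u'\geq u$ in $W^P$, so $gB\in\pi^{-1}(C^{u'}_P)=\bigsqcup_{x\in W_P}C^{u'x}$; then $u'x\geq u'\geq u$ in Bruhat order (by minimality of $u'$ in its $W_P$-coset), placing $gB\in X^u$.

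Granting the identity, $\pi$ restricts to a Zariski-locally trivial $P/B$-bundle $\pi':X^u_{ww_o^P}\to X^u_w(P)$, hence a smooth surjection with smooth connected fibers. The ingredient from \cite{Ku} (the $G/B$ case of the present lemma) is that $X^u_{ww_o^P}\subset\bar{X}$ is irreducible, normal, CM with rational singularities, and $C^u\cap C_{ww_o^P}$ is open and dense in it. Each of the four properties descends along the smooth surjection $\pi'$: irreducibility because the source is irreducible with connected fibers; normality and Cohen--Macaulayness by faithfully flat descent with smooth (hence geometrically normal and CM) fibers; and rational singularities because the property is smooth-local on a normal variety. For the density statement, $\pi(C^u)\subseteq C^u_P$ and $\pi(C_{ww_o^P})=C^P_w$ (since $w_o^P\in W_P$), so the open dense subset $C^u\cap C_{ww_o^P}\subset X^u_{ww_o^P}$ maps into $C^u_P\cap C^P_w$; surjectivity of $\pi'$ forces $C^u_P\cap C^P_w$ to be dense in $X^u_w(P)$, and it is open in $X^u_w(P)$ as the intersection with the open Schubert-cell pieces $C^u_P\subset X^u_P$ and $C^P_w\subset X^P_w$.

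The step I expect to require the most care is the descent of rational singularities along $\pi'$. The standard reference is Elkik's result that rational singularities is preserved under smooth base change and hence is a smooth-local property on a normal scheme; if direct application in the locally trivial bundle setting requires further justification, I would fall back on an explicit resolution, namely the composition $Z^u_w\to X^u_w\to X^u_w(P)$ of the $T$-equivariant desingularization $Z^u_w\to X^u_w$ from \cite[Theorem 6.8]{Ku} with the proper birational projection $X^u_w\to X^u_w(P)$, and verify the required vanishing of higher direct images of $\ss{Z^u_w}$ by a Leray computation along $\pi'$, reducing to the already-established cohomological vanishing upstairs in $\bar{X}$.
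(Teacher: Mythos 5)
Your proposal is correct and follows essentially the same route as the paper: pull back along the locally trivial $P/B$-fibration $\pi:\bar{X}\to\bar{X}_P$, identify $\pi^{-1}(X^u_w(P))$ with $X^u_{\hat{w}}$ for $\hat{w}=ww_o^P$, and descend irreducibility, normality, CM and rational singularities from the known $G/B$ case. The only cosmetic difference is in the density claim, where the paper invokes nonemptiness of $C^u_P\cap C^P_w$ (via \cite[Lemma 7.3.10]{K}) together with irreducibility, whereas you push density down from $C^u\cap C_{\hat{w}}$ through the surjection; both are fine.
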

\begin{proof} Observe that under the locally trivial fibration $\pi: \bar{X} \to \bar{X}_P$ with fiber $P/B$, $\pi^{-1}(X^u_w(P))= X^u_{\bar{w}}$,  where $\bar{w}$ is the longest element in the coset $wW_P$.  Hence, the irreducibility, normality and Cohen-Macaulay property of $X^u_w(P)$ follows,  as in the above lemma \ref{prop:norm},  from the corresponding properties of $X^u_{\bar{w}}$ (cf.   \cite[Propositon 6.6]{Ku}). Similarly, the rational singularities of $X^u_w(P)$ follows from the corresponding result for $X^u_{\bar{w}}$ (cf.  \cite[Theorem 3.1
and Remark 2.2]{KuS}). 

Finally, $C_w^P\cap C^u_P$ is clearly an open subset of $X^u_w(P)$ and therefore dense since it is nonempty by \cite[Lemma 7.3.10]{K}. 
\end{proof}

\begin{lemma}\label{CM:bdy} For $u \in W^P$, $\mathcal{O}_{X^u_P}(-\partial X^u_P)$ is a Cohen-Macaulay $\mathcal{O}_{X^u_P}$-module. Hence, $\partial X^u_P$ is a Cohen-Macaulay variety.

Thus, for any $w\in W^P$, if non-empty, $X_w^P\cap (\partial X^u_P)$ is a Cohen-Macaulay variety.
	\end{lemma}

\begin{proof} The argument is an adaptation of \cite[Proof of Lemma 4]{Br}. In the following, even though we deal with various schemes of infinite type over $\mathbb{C}$, the assertions reduce to schemes of finite type by quotienting these schemes by a closed normal subgroup $N$ of $B^-$ of finite codimension (cf. \cite[Lemma 6.1]{Ku}).
	 
	 For $u\in W^P$, let $\bar{u}$ be the longest element in the coset $uW_P$.
Fix any finite ideal $S\subset W$ containing $\bar{u}$ such that $S$ is stable under the right multiplication by $W_P$ and let $V^S$ be the corresponding open set in $\bar{X}$ as in \cite[Definition 3.1]{Ku} and let 
$V^S_P$ be its image in $\bar{X}_P$. To prove the proposition, we can replace $X^u_P$ by its open subset $X^u_P(S):= X^u_P\cap V^S_P$.
 Consider the morphisms 
$$Z^{\bar{u}} (S)\xrightarrow{\psi} X^{\bar{u}}(S)\xrightarrow{\eta} X^u_P(S),$$ 
where  $X^{\bar{u}}(S):= X^{\bar{u}}\cap V^S$, 
$\psi$ is the desingularization as in \cite[Theorem 6.4]{Ku} and $\eta$ is the standard projection map, which is birational. Let $$\bar{\eta}:Z^{\bar{u}} (S)\rightarrow X^u_P(S)$$ be the composite map $\eta\circ \psi$.	
Since $X^u_P$ has rational singularities by the next Lemma \ref{lemma0.2}, $$\eta_*(\mathcal{O}_{X^{\bar{u}}(S)})=\mathcal{O}_{X^u_P(S)}\ \text{ and }\ R^i\eta_*(\mathcal{O}_{X^{\bar{u}}(S)})=0, \text{ for all } i>0.$$ It is easy to see that $\eta^{-1}(\partial X^u_P(S)) = \partial X^{\bar{u}}(S)$,
where $\partial X^{\bar{u}}(S):= (\partial X^{\bar{u}})\cap V^S$ etc. Thus, 
\begin{equation}\label{etapush} {\eta}_*(\mathcal{O}_{X^{\bar{u}}(S)}(-\partial X^{\bar{u}}(S))=\mathcal{O}_{X^u_P(S)}(-\partial X^u_P(S)).
	\end{equation}
	From the Grauert-Riemenschneider Theorem 
	$$ R^i\bar{\eta}_*(\omega_{Z^{\bar{u}}(S)})=0 \text{ and } R^i\psi_*(\omega_{Z^{\bar{u}}(S)})=0, \text{ for } i>0,
	$$
	 where $\omega$ is the canonical sheaf. Thus,
	\begin{equation} \label{eqn100} R^i\psi_*\left(\omega_{Z^{\bar{u}}(S)}\otimes \psi^*(\mathcal{L}(\rho))\right)\simeq R^i\psi_*\left(\omega_{Z^{\bar{u}}(S)}\right)\otimes \mathcal{L}(\rho)	
	=0, \text{  } i>0.
	\end{equation}	
	By the Relative Kawamata-Viehweg Vanishing  Theorem \cite[Theorem 8.3]{Ku} (since $\mathcal{L}(\rho)$ is ample on $X^{\bar{u}}(S)$
	and hence $\psi^*(\mathcal{L}(\rho))$ is nef and big over $Z^{\bar{u}}(S)$)
	\begin{equation} \label{eqn101} 		R^i\bar{\eta}_*\left(\psi^*(\mathcal{L}(\rho))\otimes \omega_{Z^{\bar{u}}(S)}\right)=0, \,\,\,\text{ for } i>0.	\end{equation}		
	By the Grothendieck spectral sequence \cite[Part I, \S 4.1]{J} for the composite map $\bar{\eta} = \eta\circ\psi$, using $\psi_*(\omega_{Z^{\bar{u}}(S)}) = \omega_{X^{\bar{u}}(S)}$ as in \cite[Theorem 5.10]{KM}(since $X^{\bar{u}}$ has rational singularities
	by Lemma \ref{lemma0.2})  and using the identities \eqref{eqn101} and \eqref{eqn100},	
	we get 
	\begin{equation} \label{eqn102} R^i\eta_*\left(\mathcal{O}_{X^{\bar{u}}(S)}(-\partial X^{\bar{u}}(S)\right)\simeq	R^i\eta_*\left(\mathcal{L}(\rho)\otimes \omega_{X^{\bar{u}}(S)}\right)=0 , \,\,\,\text{ for } i>0.	\end{equation}	
	where the first isomorphism follows from \cite[Theorem 10.4]{Ku}. 
	
	Take the ample line bundle $\mathcal{M}':=\mathcal{L}^P(\hat{\rho}_Y)$ over ${X}_P^u$. It has a section with support precisely equal to $\partial X^u_P$  (cf. Proof of Lemma \ref{sec:support}). Let $\mathcal{M}$ be its pull-back to $Z^{\bar{u}} (S)$ via $\bar{\eta}$. Then, $\mathcal{M}$ has support precisely equal to  $\partial Z^{\bar{u}}(S) := \bar{\eta}^{-1} (\partial X^u_P(S))$ with divisor $\sum_{i=1}^d b_i Z_i$ (where $b_i>0$ for all $i$), $Z_i$ being the irreducible components of  	$\partial Z^{\bar{u}}(S)$.	(Since $\partial Z^{\bar{u}}(S)$ is the zero set of a line bundle on $Z^{\bar{u}}(S)$, $\partial Z^{\bar{u}}(S)$ is a pure scheme of codimension 1 in $Z^{\bar{u}}(S)$.) 	Further, $\mathcal{M}$ is nef and big (since $\bar{\eta}$ is birational and $\mathcal{M}'$ is ample). Observe that $\mathcal{M}'$ descends to an ample  line bundle over $N\backslash X^u_P(S)$ and the section $\chi(e^*_{\bar{u}\hat{\rho}_Y})$ descends to a section over $N\backslash X^u_P(S)$.	
	
		Let $\mathcal{L}$ be the line bundle on the smooth scheme $Z^{\bar{u}}(S)$ associated to the reduced divisor $\partial Z^{\bar{u}}(S)$ and let $D$ be the divisor $\sum_i (N-b_i) Z_i$ on $Z^{\bar{u}}(S)$, where we choose $N>b_i$ for all $i$.	As  in \cite[Theorem 6.4]{Ku},  $\partial Z^{\bar{u}}(S) = \sum Z_i$ is a simple normal crossing divisor.	
	 Clearly, \[\mathcal{L}^N(-D) =\mathcal{O}_{Z^{\bar{u}}(S)}\left(\sum_i b_iZ_i\right).\]
	 
	 Thus, the vanishing 
	 \begin{equation} \label{eqn103} R^i\bar{\eta}_*\left( \omega_{Z^{\bar{u}}(S)}(\partial Z^{\bar{u}}(S)\right)=0,\,\,\text{ for $i>0$},
	 \end{equation} 	 
	follows from \cite[Theorem 8.3]{Ku}. 	 
	
	By a similar argument applied to $\psi$ and taking the line bundle $\mathcal{M}'=\mathcal{L}(\rho)$ over $X^{\bar{u}}$, we get
	 \begin{equation} \label{eqn104} R^i{\psi}_*\left( \omega_{Z^{\bar{u}}(S)}(\partial Z^{\bar{u}}(S)\right)=0,\,\,\text{ for $i>0$}.
	 \end{equation} 	 	
	Further,
	\begin{equation} \label{eqn105}\psi_*\left( \omega_{Z^{\bar{u}}(S)}(\partial Z^{\bar{u}}(S))\right)\simeq  \omega_{X^{\bar{u}}(S)}(\partial X^{\bar{u}}(S))
	\end{equation}
	by the same proof as that of \cite[Theorem 8.5 (b)]{Ku}. 
	Thus, by the Grothendieck spectral sequence for the composite map $\bar{\eta} = \eta\circ\psi$, and using the equations 
	\eqref{eqn103}, \eqref{eqn104} and \eqref{eqn105}, we get:
	 \begin{equation} \label{eqn106} R^i{\eta}_*\left( \omega_{X^{\bar{u}}(S)}(\partial X^{\bar{u}}(S)\right)=0,\,\,\text{ for $i>0$}.
	 \end{equation} 	 	
		Further, 
	\begin{align*} {\Ext}^i_{\mathcal{O}_{X^P(S)}}(\mathcal{O}_{X_P^u(S)} (-\partial X_P^u(S)),\omega_{X^P(S)}) & \approx {\Ext}^i_{\mathcal{O}_{X^P(S)}}\left(\eta_*(\mathcal{O}_{X^{\bar{u}}(S)}(-\partial X^{\bar{u}} (S))),\omega_{X^P(S)}\right), \text{ by \eqref{etapush}}\\
		& \approx  R^{i-l(u)}\eta_*\left(\omega_{X^{\bar{u}}(S)}(\partial X^{\bar{u}}(S))\right), \text{ by the duality for  $\eta$ using }\eqref{eqn102}\\
		& =  0 \text{ if } i\neq l(u), \,\,\text{using} \, \eqref{eqn106}.
		\end{align*}
		Thus, $${\Ext}^i_{\mathcal{O}_{X^P(S)}}(\mathcal{O}_{X_P^u(S)}(-\partial X_P^u(S)),\omega_{X^P(S)})=0,\text{ for } i\neq l(u).$$ This proves that $\mathcal{O}_{X_P^u(S)}(-\partial X_P^u(S))$ is a Cohen-Macaulay $\mathcal{O}_{X_P^u(S)}$-module. From this we easily see that $\partial X_P^u(S)$ is a Cohen-Macaulay variety (following the same argument as in \cite[Proof of Corollary 10.5]{Ku}). 
		
To prove that $X_w^P\cap (\partial X^{{u}}_P)$ is a Cohen-Macaulay variety, follow exactly the same argument as in the proof of \cite[Proposition 6.6]{Ku} by taking the maps (with the notation in loc cit.)
$$\mu: G\times^B X_w^P\to \bar{X}_P, \,\,[g, x] \mapsto g\cdot x,$$
and
$$\pi: G\times^B X_w^P\to \bar{X}, \,\,[g, x] \mapsto gB.$$
Then, $\mu$ is a fibration with fiber $X^B_{\bar{w}^{-1}}$. Rest of the argument is the same.
		
	\end{proof}

\begin{lemma} \label{lemma0.2} For any $u\in W^P$, $X^u_P$ has rational singularities.
\end{lemma}

\begin{proof} It suffices to prove that $X^u$ has rational singularities for any $u\in W$. Consider the boundary as a (reduced) divisor 
	$D=\sum_i Z_i$. By  \cite[Theorem 10.4]{Ku}	and
	\cite[Proof of Proposition 5.3, Proposition 5.5 and Theorem 5.7]{KuS}, $(X^u, D)$ is log canonical. Consider the line bundle $\mathcal{L}(\rho)_{|X^u}$. By \cite[Proof of Lemma 8.1]{Ku}, it has a section with divisor $\sum_i b_iZ_i$, for some $b_i>0$. Take a positive integer $N>b_i$ for all $i$ and consider the $\mathbb{Q}$-divisor $\Delta= \sum_i (1-\frac{b_i}{N})Z_i$. Then, 
	$$\omega_{X^u} +\Delta = -(1+\frac{1}{N})\sum_i b_iZ_i ,$$
	and hence it is a $\mathbb{Q}$-Cartier divisor. Thus, by \cite[Lemma 5.9]{KuS}, $(X^u, \Delta)$ is KLT. In particular, by \cite[Remark 2.2]{KuS}, $X^u$ has rational singularities. This proves the lemma.
\end{proof}

\section{The schemes $\mathcal{Z}_P$ and $\widetilde{\mathcal{Z}}$}

We continue to assume that $N$ is any fixed positive integer and $\mathbb{P} =(\mathbb{P}^N)^r$ is as in Section \ref{mixing}.

Let $u,v\leq w\in W^P$. As in Section \ref{rich}, denote by $Z^{u,v}_w:=Z^u_w\times Z^v_w$, where $Z^u_w$ is the $T$-equivariant desingularization of $X^u_w(P)$ as in Section \ref{notation}. We also let $Z^2_w:=Z_w\times Z_w$, where $Z_w$ is a BSDH variety as in \cite[\S 7.1.3]{K}. For any $\bj\in [N]^r$, let $(X^{u,v}_w(P))_\bj$ and $(Z^{u,v}_w)_\bj$ denote the inverse image of $\bb{P}_\bj$ through the maps $E(T)_\bb{P}\times^T X^{u,v}_w(P)\to\bb{P}$ and $E(T)_\bb{P}\times^T Z^{u,v}_w\to\bb{P}$ respectively. 

We define the scheme $\widetilde{\mathcal{Z}}$ to be the fiber product $(\bar{\Gamma}\times (Z^{u,v}_w)_\bj)\times_{(Z^2_w)_\bb{P}} \widetilde{\Delta}((Z_w)_\bb{P})$ and $\mathcal{Z}_P$ to be the fiber product $(\bar{\Gamma}\times (X^{u,v}_w(P))_\bj)\times_{(X^2_w(P))_\bb{P}} \widetilde{\Delta}((X^P_w)_\bb{P})$ as in the commutative diagram:\\

\begin{center}
\begin{tikzcd} & \widetilde{\mathcal{Z}}\arrow[rr, "\tilde{\mu}","\text{(smooth)}"']\arrow[ddd,"\tilde{i}",hook]\arrow[ldddd,"\widetilde{\pi}"']\arrow[dddddddd,bend right=90,"f"']\arrow[dddrr,phantom,"\square"] & & \widetilde{\Delta}((Z_w)_\bb{P})\arrow[ddd,hook]\\
	& & & \\
	& & & \\
	& \bar{\Gamma}\times (Z^{u,v}_w)_\bj \arrow[rr,"\tilde{m}","\text{(smooth)}"'] \arrow[dd,"\theta"] & & (Z^2_w)_\bb{P}\arrow[dd,"\beta"]\\
	\bar{\Gamma}& & & \\
	& \bar{\Gamma}\times (X^{u,v}_w(P))_\bj \arrow[rr,"m","\text{(flat)}"' 	]\arrow[dddrr,phantom,"\square"] & & (X^2_w(P))_\bb{P}\\
	& & & \\
	& & & \\
	& \arrow[uuuul,"\pi"]\mathcal{Z}_P\arrow[uuu,hook,"i"'] \arrow[rr,"\mu","\text{(flat)}"'] & & \widetilde{\Delta}((X_w^P)_\bb{P})\arrow[uuu,hook]	
	\end{tikzcd}
\end{center}

The map $\theta$ is induced by the product of the desingularizations $\pi^u_w:Z^{u}_w\to X^{u}_w(P)$  and $\pi^v_w:Z^{v}_w\to X^{v}_w(P)$. In particular, $\theta$ is a birational map for $u,v\leq w\in W^P$.  The map $\beta$ is induced from the BSDH desingularization $Z_w\to X_w^P$. The maps $\pi$ and $\widetilde{\pi}$ are obtained from the projections to the $\bar{\Gamma}$-factor via the maps $i$ and $\tilde{i}$ respectively. The map $f$ is the restriction of $\theta$ to $\widetilde{\mathcal{Z}}$ (via $\tilde{i}$) with image inside $\mathcal{Z}_P$. 

\begin{proposition} \label{prop8.1}The schemes $\mathcal{Z}_P$ and $\widetilde{\mathcal{Z}}$ are irreducible and the map $f:\widetilde{\mathcal{Z}}\to \mathcal{Z}_P$ is a proper birational map. Thus $\widetilde{\mathcal{Z}}$ is a desingularization of $\mathcal{Z}_P$. Moreover, $\mathcal{Z}_P$ is Cohen-Macaulay with 
	\begin{equation}\label{dimZ} \dim(\mathcal{Z}_P)=|\bj|+l(w)-(l(u)+l(v))+\dim(\bar{\Gamma}),
		\end{equation}
	where $|\bj|=\sum_{i=1}^r j_i$ for $\bj=(j_1,\cdots,j_r)$. 
	\end{proposition}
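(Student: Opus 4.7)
The plan is to read off all four claims from the commutative diagram: the upper half, governed by the smooth morphism $\tilde{m}$, realizes $\widetilde{\mathcal{Z}}$ as a smooth irreducible scheme; the lower half, governed by the flat morphism $m$, transports the Cohen--Macaulay property from the Richardson factors to $\mathcal{Z}_P$; and the birational proper morphism $f$ then links the two.

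First I would argue that $\widetilde{\mathcal{Z}}$ is smooth and irreducible of the expected dimension. Since $\tilde{m}$ is smooth and $\widetilde{\Delta}((Z_w)_\bb{P}) \hookrightarrow (Z^2_w)_\bb{P}$ is a closed embedding into a smooth ambient scheme, the base change $\tilde{\mu}$ is smooth; as $Z_w$ is a smooth irreducible BSDH variety, so is $(Z_w)_\bb{P}$, and hence $\widetilde{\mathcal{Z}}$ is smooth. To upgrade smoothness to irreducibility I would show connectedness of $\widetilde{\mathcal{Z}}$ via the projection $\widetilde{\pi}: \widetilde{\mathcal{Z}} \to \bar{\Gamma}$ to the connected group $\bar{\Gamma}$ (Lemma \ref{connected}), using the large-orbit transitivity of Lemma \ref{fiber} to see that the fibers of $\tilde{m}$ are geometrically irreducible---this parallels the analogous argument of Kumar \cite{Ku} in the $G/B$ case. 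The dimension count then comes for free from smoothness of $\tilde{\mu}$: summing $\dim \widetilde{\Delta}((Z_w)_\bb{P}) = rN + l(w)$ with the relative dimension of $\tilde{m}$ yields the claimed $|\bj| + l(w) - (l(u) + l(v)) + \dim(\bar{\Gamma})$.

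Next, I would show that $f$ is proper and birational, and deduce irreducibility of $\mathcal{Z}_P$. By construction, $f$ is the base change of the product $\theta = \pi^u_w \times \pi^v_w$ (extended over $\bar{\Gamma} \times \bb{P}_\bj$) along the flat map $m$; since each $\pi^v_w$ is a proper birational $T$-equivariant desingularization of $X^v_w(P)$ (Lemma \ref{rich:props}), so is $\theta$, and hence so is $f$. Combined with the previous paragraph this exhibits $\widetilde{\mathcal{Z}}$ as a desingularization of $\mathcal{Z}_P$, and surjectivity of $f$ forces $\mathcal{Z}_P$ to be irreducible.

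Finally, for the Cohen--Macaulay property and dimension of $\mathcal{Z}_P$, I would invoke the standard fact that a flat morphism with CM target and CM fibers has CM total space. The base change $\mu : \mathcal{Z}_P \to \widetilde{\Delta}((X_w^P)_\bb{P})$ of the flat map $m$ (Lemma \ref{map:m}) is flat with the same fibers as $m$, and the target is CM since $X_w^P$ is CM (cf.\ Section \ref{notation}). The fibers of $m$ are themselves CM because the same principle applies to $m$ itself: both $\bar{\Gamma} \times (X^{u,v}_w(P))_\bj$---a product of the smooth $\bar{\Gamma}$ with a smooth-local $\bb{P}_\bj$-bundle over the CM product $X^u_w(P) \times X^v_w(P)$ (CM by Lemma \ref{rich:props})---and $(X^2_w(P))_\bb{P}$ (a smooth-local bundle over $\bb{P}$ with CM fiber $X_w^P \times X_w^P$) are CM. The dimension formula then follows by summing base and relative dimension. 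The main obstacle in this plan is the rigorous fiber-connectedness argument for $\tilde{m}$ needed to conclude irreducibility of $\widetilde{\mathcal{Z}}$; the remaining steps are formal consequences of smoothness/flatness stability combined with the Richardson geometry assembled in Section \ref{rich}.
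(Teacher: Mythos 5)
Your first paragraph (smoothness, irreducibility and dimension of $\widetilde{\mathcal{Z}}$) and your last paragraph (CM-ness and the dimension count via flatness of $m$ and $\mu$ together with the CM-ness of the Richardson factors and of $X_w^P$) are sound and essentially coincide with what the paper does (it quotes the $G/B$ case of Kumar for $\widetilde{\mathcal{Z}}$ and uses flatness of $m$ plus \cite[Lemma 7.2]{Ku} and Lemma \ref{rich:props} for CM). The genuine gap is in your middle step. The map $f$ is \emph{not} a base change of $\theta$: $\widetilde{\mathcal{Z}}$ is the fiber product of $\bar{\Gamma}\times (Z^{u,v}_w)_\bj$ against the BSDH diagonal $\widetilde{\Delta}((Z_w)_\bb{P})$ inside $(Z^2_w)_\bb{P}$, whereas the pullback of $\mathcal{Z}_P$ along $\mathrm{id}_{\bar{\Gamma}}\times\theta$ is $\tilde{m}^{-1}\bigl(\beta^{-1}(\widetilde{\Delta}((X_w^P)_\bb{P}))\bigr)$; since $\beta$ is built from the BSDH map $Z_w\to X_w^P$, whose fibers are positive dimensional in general, $\beta^{-1}(\widetilde{\Delta}((X_w^P)_\bb{P}))$ strictly contains $\widetilde{\Delta}((Z_w)_\bb{P})$, so $\widetilde{\mathcal{Z}}$ is only a closed subscheme of that pullback. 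Moreover, even for an honest base change, birationality is not preserved in the generality you invoke. Properness of $f$ is indeed easy (it is the restriction of the proper map $\mathrm{id}\times\theta$ to a closed subscheme), but birationality has to be seen directly: $f$ is an isomorphism over the open locus $\widetilde{\mathcal{Z}}\cap\bigl(\bar{\Gamma}\times((C^u_P\cap C_w^P)\times(C^v_P\cap C_w^P))_\bj\bigr)$, and to conclude birationality one needs its image to be \emph{dense} in $\mathcal{Z}_P$, i.e., one needs irreducibility of $\mathcal{Z}_P$ first.

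This makes your deduction of irreducibility of $\mathcal{Z}_P$ from ``surjectivity of $f$'' circular: without irreducibility you only know the image of the proper map $f$ is a closed set containing the open-cell locus, and a priori $\mathcal{Z}_P$ could have extra components lying entirely over the boundary $\widetilde{\Delta}((X_w^P\setminus C_w^P)_\bb{P})$, which $f$ would miss. The paper closes exactly this gap by a different route: flatness of $m$ and the openness of its image (which contains $(C^2_w(P))_\bb{P}$, hence meets the diagonal) give, via \cite[Ch.~III, Corollary 9.6]{H} applied first to $m$ and then to $\mu$, that $\mathcal{Z}_P$ is of \emph{pure} dimension; then any putative second component $\mathcal{Z}_2$ would satisfy $\mu(\mathcal{Z}_2)\subset \widetilde{\Delta}((X_w^P\setminus C_w^P)_\bb{P})$ and, since the fibers of $\mu\vert_{\mathcal{Z}_2}$ are no larger than those of $\mu$, would have strictly smaller dimension than $\overline{\mathcal{Z}_P\cap m^{-1}((C^2_w(P))_\bb{P})}$ — contradicting pure-dimensionality. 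Only after irreducibility is in place does one get birationality of $f$ from the open isomorphism locus, the equality $\dim(\mathcal{Z}_P)=\dim(\widetilde{\mathcal{Z}})$, and (later) surjectivity of $f$ from properness plus birationality. You need to either reproduce this pure-dimension/fiber-dimension comparison or supply some other argument excluding boundary components before you may use $f$ to transport irreducibility.
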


\begin{proof} We first note that the top Cartesian square is precisely the same as that of \cite[\S 7]{Ku}, and hence (as in \cite[Equation 59]{Ku})
\begin{equation} \label{dim} \dim(\widetilde{\mathcal{Z}}) = \dim(\bar{\Gamma}) +|\bj|+l(w) -l(u) -l(v)
\end{equation} and $\widetilde{\mathcal{Z}}$ is irreducible from  \cite[Proposition 7.4]{Ku}. 

We next show that 	$\mathcal{Z}_P$ is of pure dimension. Since $m$ is a flat morphism, Im $(m)$ is an open subset of $(X^2_w(P))_\bb{P}$ (\cite[Ch. III, Exercise 9.1]{H}). Moreover, clearly Im $(m) \supset (C^2_w(P))_\bb{P}$, thus Im $(m)$ intersects $\tilde{\Delta}((X_w(P)_\bb{P})$. Applying \cite[Ch. III, Corollary 9.6]{H} first to the morphism $m: \bar{\Gamma}\times (X^{u,v}_w(P))_\bj \to$ Im $(m)$ and then its restriction $\mu$ to $\mathcal{Z}_P$, we see that $\mathcal{Z}_P$ is pure dimensional. 

		We next show that $\mathcal{Z}_P$ is irreducible. Since 	$\widetilde{\mathcal{Z}}$ is irreducible, so is its open subset $\widetilde{\mathcal{Z}}\cap \widetilde{m}^{-1}((C^2_w(P))_\bb{P})$. Now $\widetilde{\mathcal{Z}}\cap \widetilde{m}^{-1}((C^2_w(P))_\bb{P})$ maps surjectively onto the open subset $\mathcal{Z}_P\cap m^{-1}((C^2_w(P))_\bb{P})$ of $\mathcal{Z}_P$	
	under $f$ and hence $\mathcal{Z}_1: = \overline{\mathcal{Z}_P\cap m^{-1}((C^2_w(P))_\bb{P})}$ is an irreducible component of $\mathcal{Z}_P$. Let $\mathcal{Z}_2$ be another irreducible component of $\mathcal{Z}_P$ so that $\mu(\mathcal{Z}_2)\subset \tilde{\Delta}((X^P_w\setminus C^P_w)_\bb{P})$. But $\dim(\tilde{\Delta}((X^P_w\setminus C^P_w)_\bb{P}))<\dim (\tilde{\Delta}((X^P_w)_\bb{P}))$ and all of the fibers of $\mu\vert_{\mathcal{Z}_2}$ have dimension no more than any fiber of $\mu$, so $\dim(\mathcal{Z}_2)<\dim(\mathcal{Z}_1)$, which is a contradiction since $\mathcal{Z}_P$ is of pure dimension. Thus $\mathcal{Z}_P = \mathcal{Z}_1$ is irreducible.
	
	 The map $f$ is clearly proper and an isomorphism onto an open subset of $\mathcal{Z}_P$	 
	 when restricted to the open subset \[\widetilde{\mathcal{Z}}\cap(\bar{\Gamma}\times((C^u\cap C_w)\times (C^v\cap C_w))_\bj),\] where we identify the inverse image of $(C^u\cap C_w) \subset X^u_w$ inside $Z^u_w$ with $C^u\cap C_w$. Hence $\dim(\mathcal{Z}_P)=\dim(\widetilde{\mathcal{Z}})$ and \eqref{dimZ} holds by the equation \eqref{dim}. Therefore, \[\text{codim}_{\mathcal{Z}_P}(\bar{\Gamma}\times (X^{u,v}_w(P))_\bj) = \text{codim}_{\tilde{\Delta}((X^P_w)_\bb{P})}((X^2_w(P))_\bb{P})=l(w).\]
	 The fact that $\mathcal{Z}_P$ is Cohen-Macaulay follows from the fact that $m$ is flat as well as \cite[Lemma 7.2]{Ku} and \Cref{rich:props}.
	\end{proof}

\begin{lemma} The scheme $\mathcal{Z}_P$ is normal, irreducible, and Cohen-Macaulay.
	\end{lemma}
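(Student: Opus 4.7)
Since \Cref{prop8.1} already establishes that $\mathcal{Z}_P$ is irreducible and Cohen--Macaulay, only normality remains to be proved. By Serre's criterion, and since the Cohen--Macaulay property implies $(S_2)$, it suffices to verify $(R_1)$, i.e., regularity in codimension one.

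My plan is to exploit the flat morphism $\mu : \mathcal{Z}_P \to \widetilde{\Delta}((X^P_w)_\bb{P})$, which is obtained by base change from the flat morphism $m$ of \Cref{map:m}, and then to apply the standard descent principle: a flat morphism whose target is normal and whose geometric fibers are normal has normal total space (Matsumura, \emph{Commutative Ring Theory}, Thm.~23.9; EGA IV, 6.5.4). The target $\widetilde{\Delta}((X^P_w)_\bb{P}) \cong (X^P_w)_\bb{P}$ is a Zariski-locally trivial fibration over the smooth $\bb{P}$ with normal fiber $X^P_w$ (normality of $X^P_w$ by \cite[Theorem 8.2.2]{K}, recorded in Section \ref{notation}), and is therefore itself normal. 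Thus the problem reduces to showing that the geometric fibers of $\mu$ are normal.

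To carry this out, I would identify a fiber of $\mu$ over $\widetilde{\Delta}(e,x)$ with the fiber of $m$ over $(e,(x,x)) \in (X^2_w(P))_\bb{P}$. Making the $\bar{\Gamma}$-action explicit via \Cref{fiber} (surjectivity of $\bar{\Gamma}_0$ onto the $\bar{B}^2$-fiber), this fiber factors, up to a principal $T$-torsor encoding the $\bb{P}_\bj$-coordinate, as a product of the fibers of the two maps $\bar{B} \times X^u_w(P) \to X^P_w$ and $\bar{B} \times X^v_w(P) \to X^P_w$ at $x$. Each such factor is a flat fibration over $\bar{B}$ whose fiber is cut out inside the normal Richardson variety $X^u_w(P)$ (resp.\ $X^v_w(P)$), normality coming from \Cref{rich:props}, and the product of normal schemes is normal.

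The main obstacle is the careful set-theoretic and scheme-theoretic identification of the fibers of $\mu$ and the verification of their normality, especially tracking the $\bar{\Gamma}$-action twisted through the principal $T$-bundle $E(T)_\bb{P}$. This is the direct parabolic analogue of the corresponding statement in \cite{Ku} for $P=B$; the parabolic Richardson-variety facts of Section \ref{rich} substitute for their $G/B$ counterparts in essentially the same way. An alternative route, which I would keep in reserve, is to argue via the smooth resolution $\widetilde{\mathcal{Z}}$ (smooth because $\tilde{\mu}$ is smooth with smooth target $\widetilde{\Delta}((Z_w)_\bb{P}) \cong (Z_w)_\bb{P}$) and to verify $f_*\mathcal{O}_{\widetilde{\mathcal{Z}}} = \mathcal{O}_{\mathcal{Z}_P}$ directly, so that Stein factorization forces $\mathcal{Z}_P$ to coincide with its own normalization.
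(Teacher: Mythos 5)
Your plan is correct in outline, but it follows a genuinely different route from the paper. The paper proves normality by exhibiting a smooth open subset of $\mathcal{Z}_P$ whose complement has codimension $\geq 2$: it restricts $m$ to $\bar{\Gamma}\times((X^u_P(S_u')\times X^v_P(S_v'))\cap X^2_w(P))_\bj$, where $X^u_P(S_u')$ is the union of the $B^-$-orbits of codimension $\leq 1$ in $X^u_P$ (smooth because $X^u_P$ is normal, \Cref{prop:norm}), shows this restriction is a smooth morphism, intersects further with the smooth locus $\mathring{X}^P_w$, and then uses flatness and dimension counts to check that the two excised loci meet $\mathcal{Z}_P$ in codimension $\geq 2$; Serre's criterion, with CM supplying $(S_2)$, concludes. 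You instead descend normality along the flat morphism $\mu$ from a normal base and normal fibers; this is legitimate (everything here is of finite type over $\mathbb{C}$, so the criterion of EGA IV, 6.5.4, or Matsumura, Thm.\ 23.9, applies, and it in fact yields normality outright, making the reduction to $(R_1)$ in your first paragraph superfluous), and it is essentially the same mechanism the paper deploys one step later for rational singularities (\Cref{ratsings}, via Elkik). The crux you flag---the fiber identification---is exactly the paper's \eqref{eqn8.3}: for $x\in\tilde{\Delta}((C^P_{w'})_{\bb{P}})$ one has $\mathrm{Stab}(x)\backslash\mu^{-1}(x)\simeq((X^u_P\cap C^P_{w'})\times(X^v_P\cap C^P_{w'}))_\bj$ by \cite[Theorem 7.6]{Ku} together with \Cref{fiber}; normality of the fiber then follows because $X^u_P\cap C^P_{w'}$ is open in the Richardson variety $X^u_{w'}(P)$ (so it is \Cref{rich:props} applied to $w'$, not to $w$, that you actually need), while $\mathrm{Stab}(x)$ is smooth and $\bar{\Gamma}\to\mathrm{Stab}(x)\backslash\bar{\Gamma}$ is \'etale-locally trivial. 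Two cautions: your description of the fiber as a product of the fibers of $\bar{B}\times X^u_w(P)\to X^P_w$ and $\bar{B}\times X^v_w(P)\to X^P_w$ is only correct after accounting for the $GL(N+1)^r$-direction of $\bar{\Gamma}$, which is what produces the subscript $\bj$ in \eqref{eqn8.3}; and your reserve route via $f_*\ss{\widetilde{\mathcal{Z}}}=\ss{\mathcal{Z}_P}$ is not an independent argument, since for the proper birational $f$ with $\widetilde{\mathcal{Z}}$ smooth that equality is equivalent to the normality being proved. What your approach buys is a shorter, softer argument that bypasses the explicit codimension-two analysis; what the paper's approach buys is an explicit description of a big smooth locus of $\mathcal{Z}_P$ obtained with the same tools (flatness of $m$ and smoothness of orbit maps) already set up in \Cref{map:m}.
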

\begin{proof} The only thing that remains to be shown is that $\mathcal{Z}_P$ is normal. Consider the map \[\mu_o: G\times^{U^-} X^u_P(S_u')\to \bar{X}_P,\hspace{1cm} [g,x]\mapsto g\cdot x,\] where $X^u_P(S_u'):=X^u_P\cap V^{S_u'}$, $V^{S_u'}:=\bigcup_{w\in S_u'} B^-wx_o$, $x_o$ is the base point $1.P$ of $\bar{X}_P$, and $S_u':=\{v\in W^P\mid l(v)\leq l(u)+1\}$. Since $\mu_o$ is $G$-equivariant, it is a locally trivial fibration with fibers $F^u:=\bigcup_{u\leq v;\ l(v)\leq l(u)+1} Pv^{-1}U^-/U^-$. Since $X^u_P$ is normal by \Cref{prop:norm} and any $B^-$ orbit in $X^u_P(S_u')$ has codimension $\leq 1$ in $X^u_P$, $X^u_P(S_u')$ is smooth and so is $F^u$. (Here
the smoothness of $F^u$ means that there exists a closed normal subgroup $B_1$ of
$B$ of finite codimension such that $B_1$ acts freely and properly on $F^u$
 and the quotient $B_1\setminus F^u$ is a
smooth scheme of finite type over $\mathbb{C}$.) 
Therefore $\mu_o$ is a smooth morphism. Hence so is the restriction of $\mu_o$ to the open subset $B\times X^u_P(S_u')$, and further so is the restriction to the inverse image of $X_w^P$, $\mu_o(w):B\times (X^u_P(S_u')\cap X_w^P)\to X_w^P$. Clearly $\mu_o(w)$ factors through a smooth morphism $\bar{\mu}_o(w):\bar{B}\times (X^u_P(S_u')\cap X_w^P)\to X_w^P$, where $\bar{B}$ is a finite-dimensional quotient of $B$. Hence, $\bar{\mu}_o(w)^{-1}(\mathring{X}_w^P) = \bar{B}\times (X^u_P(S_u')\cap (\mathring{X}_w^P))$ is a smooth variety, where $\mathring{X}_w^P:=X_w^P\setminus \Sigma_w^P$ and $\Sigma_w^P$ is the singular locus of $X_w^P$.

Following the same argument as in the proof of  \Cref{map:m} and restricting the middle vertical map to the open subset $\bar{m}:\bar{\Gamma}\times ((X^u_P(S_u')\times X^v_P(S_v'))\cap X^2_w(P))_\bj\to (X^2_w(P))_\bb{P}$ and the left vertical map accordingly, we have that $\bar{m}$ is a smooth morphism (with open image $Y$). Hence, the restriction of $\bar{m}$ to $\tilde{m}:\bar{m}^{-1}(\tilde{\Delta}((X_w^P)_\bb{P}))\to\tilde{\Delta}((X_w^P)_\bb{P})$ is also smooth. (Observe that $Y$ does intersect $\tilde{\Delta}((X_w^P)_\bb{P})$ since 
$Y$ being a $\bar{\Gamma}$-stable open subset of  $(X^2_w(P))_\bb{P}$,  $(C^2_w(P))_\bb{P}\subset Y$.)
Therefore the open set $\tilde{m}^{-1}(\tilde{\Delta}((\mathring{X}_w^P)_\bb{P}))$ in $\mathcal{Z}_P=m^{-1}(\tilde{\Delta}((X_w^P)_\bb{P}))$ is a smooth variety. Denote the complement of $\bar{\Gamma}\times ((X^u_P(S_u')\times X^v_P(S_v'))\cap X^2_w(P))_\bj$ in $\bar{\Gamma}\times (X^{u,v}_w(P))_\bj$ by $F$ and denote $\tilde{m}^{-1}(\tilde{\Delta}((\Sigma_w^P)_\bb{P}))$ by $F'$. Then $F'$ has codimension $\geq 2$ in $\bar{m}^{-1}(\tilde{\Delta}((X_w^P)_\bb{P}))$, and hence in $\mathcal{Z}_P$. Also, $F$ is of codimension $\geq 2$ in $\bar{\Gamma}\times(X^{u,v}_w(P))_\bj$. If $F$ is nonempty, then the restriction of $m$ to $F$ is flat (following the proof of \Cref{map:m}) with image an open subset of $(X^2_w(P))_\bb{P}$ intersecting $\tilde{\Delta}((X^P_w)_\bb{P})$. Therefore the codimension of $F\cap \mathcal{Z}_P$ in $\mathcal{Z}_P$ is $\geq 2$ and hence so is the complement of the smooth locus of $\mathcal{Z}_P$
 in $\mathcal{Z}_P$. Finally, since $\mathcal{Z}_P$ is Cohen-Macaulay, it is normal by Serre's criterion \cite[Ch. II, Theorem 8.22(A)]{H}.
	\end{proof}

\begin{proposition}\label{ratsings} The scheme $\mathcal{Z}_P$ has rational singularities. 
	\end{proposition}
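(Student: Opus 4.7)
The plan is to show that $f: \widetilde{\mathcal{Z}} \to \mathcal{Z}_P$ is a resolution of singularities with $R^i f_* \mathcal{O}_{\widetilde{\mathcal{Z}}} = 0$ for all $i > 0$. Combined with the Cohen-Macaulayness and normality established in the previous lemma, this will yield rational singularities of $\mathcal{Z}_P$.

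First, I would verify smoothness of $\widetilde{\mathcal{Z}}$. The morphism $\tilde{m}$ is smooth (recorded in Section \ref{rich}), so its base change $\tilde{\mu}: \widetilde{\mathcal{Z}} \to \widetilde{\Delta}((Z_w)_\bb{P})$ is smooth. The target is smooth, being (via the diagonal) isomorphic to $(Z_w)_\bb{P}$, a BSDH-fiber bundle over the smooth base $\bb{P}$. Hence $\widetilde{\mathcal{Z}}$ is smooth. Combined with the properness and birationality of $f$ from Proposition \ref{prop8.1} and the normality of $\mathcal{Z}_P$ from the previous lemma, Zariski's main theorem yields $f_* \mathcal{O}_{\widetilde{\mathcal{Z}}} = \mathcal{O}_{\mathcal{Z}_P}$.

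For the higher direct image vanishing $R^i f_* \mathcal{O}_{\widetilde{\mathcal{Z}}} = 0$ for $i > 0$, I would follow the strategy of the $G/B$-case in \cite[\S 7]{Ku}. The two key inputs are the rational singularities of the Richardson varieties $X^u_w(P)$ and $X^v_w(P)$ (Lemma \ref{rich:props}) and of the Schubert variety $X_w^P$ (\cite[Theorem 8.2.2]{K}). These rational-singularity statements, combined with the K\"unneth formula for $Z^u_w \times Z^v_w$, yield the fiberwise vanishings $R^i\theta_* \mathcal{O}_{\bar{\Gamma}\times(Z^{u,v}_w)_\bj} = 0$ and the analogous vanishing for the BSDH map on $(Z_w)_\bb{P}$ (the mixing-space lift being straightforward since $\pi_{\bar{X}_P}$ is a locally trivial fibration over the smooth base $\bb{P}$). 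Using the flatness of $m$ and $\mu$ (Lemma \ref{map:m}) together with the Grothendieck spectral sequence for a suitable factorization of $f$, these vanishings should propagate to give $R^i f_* \mathcal{O}_{\widetilde{\mathcal{Z}}} = 0$ for $i > 0$.

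The hardest part lies in factoring $f$ cleanly and arguing that the relevant squares are cartesian enough for flat base change to apply. A natural intermediate is $\mathcal{Z}_1 := (\bar{\Gamma}\times(X^{u,v}_w(P))_\bj)\times_{(X^2_w(P))_\bb{P}} \widetilde{\Delta}((Z_w)_\bb{P})$, giving $f = g_2 \circ g_1$, with $g_2: \mathcal{Z}_1 \to \mathcal{Z}_P$ obtained as the base change of the BSDH resolution along $\mu$, which handles the Schubert-variety side cleanly. However, the square formed by $g_1: \widetilde{\mathcal{Z}} \to \mathcal{Z}_1$ against $\theta$ fails to be literally cartesian, because $\widetilde{\mathcal{Z}}$ is a fiber product over $(Z^2_w)_\bb{P}$ rather than $(X^2_w(P))_\bb{P}$ and the map $\beta\times\beta$ is not injective. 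Overcoming this is the technical crux: it is precisely the point handled in the $G/B$-setting of \cite{Ku}, and the $G/P$-situation here is its natural extension whose only genuinely new input is the rational-singularity statement of Lemma \ref{rich:props}. Once this bookkeeping is addressed, the proof is essentially formal.
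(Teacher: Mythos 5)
There is a genuine gap at exactly the point you flag as the ``technical crux,'' and the deferral of that point to \cite{Ku} is misplaced. The easy parts of your outline are fine: $\widetilde{\mathcal{Z}}$ is smooth, $f_*\ss{\widetilde{\mathcal{Z}}}=\ss{\mathcal{Z}_P}$ follows from properness, birationality and normality, and the leg $g_2:\mathcal{Z}_1\to\mathcal{Z}_P$ can indeed be handled by flat base change along the flat map $\mu$, using that the BSDH map $Z_w\to X^P_w$ is a rational resolution. But the whole content of the proposition is the remaining vanishing $R^ig_{1*}\ss{\widetilde{\mathcal{Z}}}=0$ (equivalently $R^if_*\ss{\widetilde{\mathcal{Z}}}=0$, together with $g_{1*}\ss{\widetilde{\mathcal{Z}}}=\ss{\mathcal{Z}_1}$, for which you would also need normality or at least reducedness and connectedness of fibers of $\mathcal{Z}_1$, which is not addressed). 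Your proposed mechanism --- ``flatness of $m$ and $\mu$ plus a Grothendieck spectral sequence'' --- cannot deliver this: as you yourself observe, the square relating $g_1$ to $\theta$ is not Cartesian (because $\widetilde{\mathcal{Z}}$ is a fiber product over $(Z^2_w)_\bb{P}$, not over $(X^2_w(P))_\bb{P}$), and even if it were, the map along which you would have to base-change $R^i\theta_*$ (the closed immersion $i:\mathcal{Z}_P\hookrightarrow\bar{\Gamma}\times(X^{u,v}_w(P))_\bj$, resp.\ the proper non-flat map $\mathcal{Z}_1\to\bar{\Gamma}\times(X^{u,v}_w(P))_\bj$) is not flat, so flat base change does not apply and one would need a Tor-independence/transversality statement that is nowhere established. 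This is not ``bookkeeping'': it is the actual difficulty, and general vanishing theorems (Grauert--Riemenschneider, relative Kawamata--Viehweg) give $R^if_*\omega$-type statements, not $R^if_*\ss{\widetilde{\mathcal{Z}}}=0$.

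Moreover, \cite{Ku} does not resolve this issue in the way you assert; in the $G/B$ case (and in this paper) rational singularities of $\mathcal{Z}$ (resp.\ $\mathcal{Z}_P$) are proved by a different mechanism, which avoids $f$ altogether. Namely, one applies Elkik's theorem \cite[Th\'eor\`eme 5]{El} to the \emph{flat} morphism $\mu:\mathcal{Z}_P\to\tilde{\Delta}((X^P_w)_\bb{P})$: the base has rational singularities, so it suffices to show that each fiber $\mu^{-1}(x)$, $x\in\tilde{\Delta}((C^P_{w'})_\bb{P})$, is a disjoint union of irreducible varieties with rational singularities. Using the $\bar{\Gamma}$-homogeneity of the cells (\Cref{fiber}) and the fiber description of \cite[Theorem 7.6]{Ku}, one identifies $\mathrm{Stab}(x)\backslash\mu^{-1}(x)\simeq((X^u_P\cap C^P_{w'})\times(X^v_P\cap C^P_{w'}))_\bj$, which has rational singularities by \Cref{rich:props}; since $\bar{\Gamma}\to\mathrm{Stab}(x)\backslash\bar{\Gamma}$ is \'etale-locally trivial and $\mathrm{Stab}(x)$ is smooth, $\mu^{-1}(x)$ inherits rational singularities by \cite[Corollary 5.11]{KM}. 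If you want to keep your resolution-theoretic formulation, note that the paper's logic runs in the opposite direction: rational singularities are established first via Elkik, and only then is $f_*\omega_{\widetilde{\mathcal{Z}}}=\omega_{\mathcal{Z}_P}$ (and the $\omega$-vanishing of Section 9) deduced. So your proposal, as written, does not constitute a proof.
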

\begin{proof} Since $\mu$ is flat and $\tilde{\Delta}((X_w^P)_\bb{P})$ has rational singularities \cite[Theorem 8.2.2(c)]{K}, it is enough to show (by \cite[Th\'{e}or\`{e}me 5]{El}) that the fibers of $\mu$ are disjoint unions of irreducible varieties with rational singularities. 
	
	Let $x\in \tilde{\Delta}((C_{w'}^P)_\bb{P})$, where $w'\leq w \in W^P$. Then from \cite[Lemma 7.6]{Ku} and \Cref{fiber}, 
	\begin{equation} \label{eqn8.3} \text{Stab}(x)\setminus \mu^{-1}(x)\simeq ((X^u_P\cap C_{w'}^P)\times(X^v_P\cap C^P_{w'}))_\bj,
	\end{equation}
	 where the stabilizer Stab$(x)$ is taken with respect to the $\bar{\Gamma}$ action on $(X^2_w(P))_\bb{P}$. From \cite[Proposition 3, \S 2.5]{Se}, the map $\bar{\Gamma}\to \text{Stab}(x)\setminus \bar{\Gamma}$ is locally trivial in the \'{e}tale topology. 
	
	We have the pull-back diagram 
	\begin{center}\begin{tikzcd} \mu^{-1}(x)\arrow[d] & \subseteq & \bar{\Gamma}\times (X^{u,v}_w(P))_\bj\arrow[d]\\ \text{Stab}(x)\setminus\mu^{-1}(x) & \subseteq & (\text{Stab}(x)\setminus\bar{\Gamma})\times (X^{u,v}_w(P))_\bj\,,
			\end{tikzcd}
		\end{center}
	where the left vertical map is a locally trivial fibration in the \'{e}tale topology since so is the right side map. Therefore, since $\text{Stab}(x)$ is smooth and $\text{Stab}(x)\setminus\mu^{-1}(x)$ has rational singularities (by the equation \eqref{eqn8.3} and Lemma \ref{rich:props}), $\mu^{-1}(x)$ is a disjoint union of irreducible varieties with rational singularities by \cite[Corollary 5.11]{KM}.
	\end{proof}

\begin{proposition}\label{prop:bdyZ} If non-empty, the scheme $\partial \mathcal{Z}_P$ is of pure codimension 1 in $\mathcal{Z}_P$ and is Cohen-Macaulay, where we define the closed subscheme $\partial \mathcal{Z}_P$ of $\mathcal{Z}_P$ as \begin{equation*} \partial\mathcal{Z}_P:=(\bar{\Gamma}\times\partial ((X^{u,v}_w(P))_\bj))\times_{(X^2_w(P))_\bb{P}} \widetilde{\Delta}((X_w^P)_\bb{P}),
	\end{equation*}
where $\partial ((X^{u,v}_w(P))_\bj)$ is as in Lemma \ref{map:m}.
	\end{proposition}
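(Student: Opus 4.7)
The plan is to mirror the proof of Proposition \ref{prop8.1} by replacing $m$ with its boundary restriction $\hat{m}$, which is flat by Lemma \ref{map:m}, and to reduce both claims to the corresponding statements for $\partial((X^{u,v}_w(P))_\bj)\subset (X^{u,v}_w(P))_\bj$. First I would establish pure codimension $1$: write $\partial((X^{u,v}_w(P))_\bj)$ as the union of the two Richardson--boundary pieces $((\partial X^u_P\cap X^P_w)\times X^v_w(P))_\bj$ and $(X^u_w(P)\times (\partial X^v_P\cap X^P_w))_\bj$, and the base--boundary piece $(X^{u,v}_w(P))_{\partial\bb{P}_\bj}$. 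Each has pure codimension $1$ in $(X^{u,v}_w(P))_\bj$ by Lemma \ref{rich:props} and the fact that $\partial\bb{P}_\bj$ has pure codimension $1$ in $\bb{P}_\bj$. Combined with flatness of $\hat{m}$ and the dimension formula of Proposition \ref{prop8.1}, this gives $\dim \partial \mathcal{Z}_P = \dim \mathcal{Z}_P - 1$ with pure codimension $1$ in $\mathcal{Z}_P$.

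Next I would establish Cohen--Macaulayness of $\partial((X^{u,v}_w(P))_\bj)$ and then transfer. Each of the three families above is CM: the Richardson product factors use Lemma \ref{rich:props} together with the Remark following Lemma \ref{CM:bdy} (which supplies CM of $(\partial X^u_P)\cap X_w^P$); the mixing bundle over $\bb{P}_\bj$ is locally trivial with smooth base and thus preserves CM; and $\partial\bb{P}_\bj$ is CM as a simple normal crossings divisor in a smooth variety, so the associated bundle is CM as well. To assemble these pieces into a CM union, I would iterate the standard Mayer--Vietoris depth lemma: if $Y, Z$ are closed subschemes of a scheme with $Y, Z$ CM of pure dimension $d$ and scheme-theoretic intersection $Y\cap Z$ CM of pure dimension $d-1$, then the exact sequence
\[0 \to \ss{Y\cup Z} \to \ss{Y}\oplus \ss{Z} \to \ss{Y\cap Z} \to 0\]
together with the depth inequality for short exact sequences forces $\ss{Y\cup Z}$ to be CM of pure dimension $d$. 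Each pairwise or triple intersection of the three families splits as a product of (possibly boundary) Richardson pieces twisted over a stratum of $\partial\bb{P}_\bj$, and is therefore CM of the expected codimension by the same product-and-bundle reasoning, so the iteration closes.

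Finally, $\bar{\Gamma}\times \partial((X^{u,v}_w(P))_\bj)$ is CM because $\bar{\Gamma}$ is smooth, and the same argument as in the last paragraph of the proof of Proposition \ref{prop8.1} (using flatness of $\hat{m}$, \cite[Lemma 7.2]{Ku}, and CM of $\widetilde{\Delta}((X^P_w)_\bb{P})$ inherited from CM of $X^P_w$) yields CM of $\partial \mathcal{Z}_P$. The main obstacle I anticipate is the combinatorial bookkeeping for the iterated Mayer--Vietoris, namely verifying that all pairwise and triple intersections of the three families remain CM of the correct codimension; this should be mechanical once the product-and-bundle structure of each intersection is made explicit using Lemma \ref{rich:props} and the Remark after Lemma \ref{CM:bdy}.
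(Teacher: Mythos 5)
Your proposal is correct and follows essentially the same route as the paper: pure codimension one comes from flatness of $\hat{m}$ (\Cref{map:m}) together with the fiber-dimension count already used in \Cref{prop8.1}, and Cohen--Macaulayness by first proving $\partial((X^{u,v}_w(P))_\bj)$ is CM via the two-subscheme union lemma (your Mayer--Vietoris depth argument is exactly the paper's appeal to \cite[Theorem A.36]{K}, applied first to the two Richardson-boundary pieces and then to their union together with $(X^{u,v}_w(P))_{\partial\bb{P}_\bj}$), and finally transferring to $\partial\mathcal{Z}_P$ by applying \cite[Lemma 7.2]{Ku} to $\hat{m}$. The only point the paper makes explicit that you gloss over is the verification (via the case analysis on whether $\partial\bb{P}_\bj$ is empty) that $\operatorname{Im}\hat{m}$ is open and meets $\widetilde{\Delta}((X_w^P)_\bb{P})$, which is what licenses the application of \cite[Ch.~III, Corollary 9.6]{H} in the dimension count.
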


\begin{proof} First recall that $\bar{\Gamma}\times \partial((X^{u,v}_w(P))_\bj)\xrightarrow{\hat{m}} (X^2_w(P))_\bb{P}$ is a flat morphism (cf. \Cref{map:m}) and observe that $\partial ((X^{u,v}_w(P))_\bj)$ is pure of codimension 1 in $(X^{u,v}_w(P))_\bj$. We consider two cases: $\partial\bb{P}_\bj\neq\emptyset$ or $\partial \bb{P}_\bj=\emptyset$. In the first case, $\text{Im }\hat{m}=\text{Im }m$. In the latter case, we have \[\text{Im }\hat{m}\supset \left(\left(\left(\bigcup_{u\to u'\leq \theta\leq w}C_\theta^P\right)\times\left(\bigcup_{v\leq \theta'\leq w}C_{\theta'}^P\right)\right)\cup\left(\left(\bigcup_{u\leq \theta\leq w}C_\theta^P\right)\times\left(\bigcup_{v\to v'\leq \theta'\leq w}C_{\theta'}^P\right)\right)\right)_\bb{P}.\] In either case we have that, if nonempty, $\text{Im }\hat{m}$ intersects $\tilde{\Delta}((X_w^P)_\bb{P})$. Moreover, since $\hat{m}$ is flat, $\text{Im }\hat{m}$ is open in $(X^2_w(P))_\bb{P}$. Thus, by \cite[Ch. III, Corollary 9.6]{H}, each fiber of $\hat{m}$ (if nonempty) is pure of dimension \[\dim(\bar{\Gamma})+\dim((X^{u,v}_w(P))_\bj)-\dim((X^2_w(P))_\bb{P})-1.\]
	
	Applying \cite[Ch. III, Corollary 9.6]{H} again to the restriction of $\hat{m}$ to $\partial\mathcal{Z}_P$ via $\mu$, we have that $\partial\mathcal{Z}_P$ is pure of dimension \[\dim(\bar{\Gamma})+\dim((X^{u,v}_w(P))_\bj)-\dim((X^2_w(P))_\bb{P})-1+\dim(\tilde{\Delta}((X_w^P)_\bb{P})).\] Hence, from 
	\eqref{dimZ} it is clear that $\partial\mathcal{Z}_P$ is pure of codimension 1 in $\mathcal{Z}_P$. From Lemma  \ref{rich:props}	 and 	
	  \Cref{CM:bdy}, both of $((\partial X^u_P)\cap X_w^P)\times X^v_w(P)$ and $X^u_w(P)\times ((\partial X^v_P)\cap X_w^P)$ are Cohen-Macaulay and  so is their intersection, which is of pure codimension 1 in each of them. Thus their union is Cohen-Macaulay (cf. \cite[Theorem A.36]{K}) and hence so is $((\partial X^{u,v}_P)\cap X^2_w(P))_\bj$. We also have that $(X^{u,v}_w(P))_{\partial\bb{P}_\bj}$ and the intersection \[((\partial X^{u,v}_P)\cap X^2_w(P))_\bj\cap (X^{u,v}_w(P))_{\partial\bb{P}_\bj}=((\partial X^{u,v}_P\cap X^2_w(P))_{\partial\bb{P}_\bj}\] are Cohen-Macaulay since $\partial \bb{P}_\bj$ is. Since $((\partial X^{u,v}_P)\cap X^2_w(P))_{\partial\bb{P}_\bj}$ is Cohen-Macaulay of pure codimension 1 in each of $(X^{u,v}_w(P))_{\partial\bb{P}_\bj}$ and $((\partial X^{u,v}_P)\cap X^2_w(P))_\bj$, their union $\partial((X^{u,v}_w(P))_\bj)$ is Cohen-Macaulay. Finally, applying \cite[Lemma 7.2]{Ku} to $\hat{m}$ shows $\partial \mathcal{Z}_P$ is Cohen-Macaulay. 
	\end{proof}

\begin{corollary} \label{coro8.5} Assume that $c_{u,v}^w(\bj)\neq 0$, where  $c_{u,v}^w(\bj)$ is defined by the equation \eqref{equivcoeff:c} (also see Lemma \ref{lemma:c}). 
 Then, $\pi$ is surjective and  for general $\gamma\in \bar{\Gamma}$, the fiber $N_\gamma:=\pi^{-1}(\gamma)\subset\mathcal{Z}_P$ is Cohen-Macaulay of pure dimension, where $\pi:\mathcal{Z}_P\to \bar{\Gamma}$ is defined at the beginning of this section. In fact, for any $\gamma\in\bar{\Gamma}$ such that $N_\gamma$ is pure of dimension  \begin{equation} \dim(N_\gamma) = \dim(\mathcal{Z}_P)-\dim(\bar{\Gamma}) = |\bj|+l(w)-l(u)-l(v),
		\end{equation}
	$N_\gamma$ is Cohen-Macaulay (and this condition holds for general $\gamma$). 
	
	Similarly, if $|\bj|+l(w)-l(u)-l(v)>0$, then for general $\gamma\in\bar{\Gamma}$, the fiber $M_\gamma:=\pi_1^{-1}(\gamma)\subset\partial \mathcal{Z}_P$ is Cohen-Macaulay of pure codimension 1 in $N_\gamma$, where $\pi_1$ is the restriction of $\pi$ to $\partial\mathcal{Z}_P$. If $|\bj|+l(w)-l(u)-l(v) = 0$, then for general $\gamma\in\bar{\Gamma}$, $M_\gamma$ is empty. 
	
	In particular, for general $\gamma\in\bar{\Gamma}$, 
	\begin{equation} \label{eqn21} \sext^i_{\ss{N_\gamma}}(\idshf{N_\gamma}{M_\gamma},\omega_{N_\gamma})=0\hspace{2mm}\text{ for all  } i>0,
		\end{equation}
		where $\omega_{N_\gamma}$ is the dualizing sheaf of $N_\gamma$. 		
	\end{corollary}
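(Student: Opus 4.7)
The plan is to apply miracle flatness to the two projections $\pi\colon\mathcal{Z}_P\to\bar{\Gamma}$ and $\pi_1\colon\partial\mathcal{Z}_P\to\bar{\Gamma}$, using the Cohen-Macaulay property and dimension formulas already established in Propositions~\ref{prop8.1} and \ref{prop:bdyZ}, and then to read off the $\sext$-vanishing from a single short exact sequence together with standard Grothendieck--Serre duality. The first step is to establish that $\pi$ is dominant. By Lemma~\ref{lemma:c} combined with the Tor-vanishing of Theorem~\ref{main}(A), for generic $\gamma\in\bar{\Gamma}$ the coefficient $c_{u,v}^w(\bj)$ is the Euler characteristic on $\xyp{Y}$ of a single coherent sheaf whose support is exactly $\gamma(\widetilde{\Delta}((X^P_w)_\bb{P}))\cap m(\bar{\Gamma}\times(X^{u,v}_w(P))_\bj)$. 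The hypothesis $c_{u,v}^w(\bj)\neq 0$ forces this intersection to be nonempty, which is precisely the assertion $\gamma\in\pi(\mathcal{Z}_P)$; hence $\pi$ is dominant. Since $\mathcal{Z}_P$ is irreducible of dimension $|\bj|+l(w)-l(u)-l(v)+\dim(\bar{\Gamma})$ by Proposition~\ref{prop8.1}, upper semicontinuity of fibre dimension then produces a dense open $U\subset\bar{\Gamma}$ on which every $N_\gamma$ is pure of the expected dimension $|\bj|+l(w)-l(u)-l(v)$.

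With dominance and equidimensional generic fibres in hand, I would invoke miracle flatness: $\mathcal{Z}_P$ is CM, $\bar{\Gamma}$ is smooth, and the fibres of $\pi$ over $U$ have the expected constant dimension, so $\pi|_{\pi^{-1}(U)}$ is flat and its fibres are CM. This gives the first assertion. The same reasoning applied to $\pi_1$ uses Proposition~\ref{prop:bdyZ}: $\partial\mathcal{Z}_P$ is CM of pure codimension one in $\mathcal{Z}_P$, so $\dim\partial\mathcal{Z}_P=|\bj|+l(w)-l(u)-l(v)-1+\dim\bar{\Gamma}$. When $|\bj|+l(w)-l(u)-l(v)>0$, the analogous dominance argument applied to each irreducible component of $\partial\mathcal{Z}_P$ shows $\pi_1$ is dominant, and miracle flatness then delivers CM fibres $M_\gamma$ of pure codimension one in $N_\gamma$ over a dense open. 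When $|\bj|+l(w)-l(u)-l(v)=0$, one has $\dim\partial\mathcal{Z}_P<\dim\bar{\Gamma}$, so $\pi_1$ cannot be dominant and $M_\gamma=\emptyset$ for $\gamma$ outside a proper closed subset.

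For the $\sext$-vanishing in \eqref{eqn21}, if $M_\gamma=\emptyset$ then $\idshf{N_\gamma}{M_\gamma}=\ss{N_\gamma}$ and the conclusion is immediate from CM-ness of $N_\gamma$ with dualizing sheaf $\omega_{N_\gamma}$. Otherwise I would apply $\sext^\bullet_{\ss{N_\gamma}}(-,\omega_{N_\gamma})$ to
\[
0\to\idshf{N_\gamma}{M_\gamma}\to\ss{N_\gamma}\to\ss{M_\gamma}\to 0
\]
and combine two standard consequences of local duality on a CM scheme: $\sext^i_{\ss{N_\gamma}}(\ss{N_\gamma},\omega_{N_\gamma})=0$ for $i\geq 1$, and $\sext^i_{\ss{N_\gamma}}(\ss{M_\gamma},\omega_{N_\gamma})=0$ for $i\neq 1$, the latter because $M_\gamma$ is a CM subscheme of pure codimension one in the CM scheme $N_\gamma$ (so $\sext^\bullet_{\ss{N_\gamma}}(\ss{M_\gamma},\omega_{N_\gamma})$ computes the dualizing complex of $M_\gamma$, concentrated in degree one). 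The long exact sequence of $\sext$ then forces $\sext^i_{\ss{N_\gamma}}(\idshf{N_\gamma}{M_\gamma},\omega_{N_\gamma})=0$ for every $i\geq 1$.

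The main obstacle is the dominance step: converting the numerical non-vanishing $c_{u,v}^w(\bj)\neq 0$ into a genuine geometric statement about $\pi(\mathcal{Z}_P)\subset\bar{\Gamma}$. This requires carefully tracing through the pairing \eqref{coeff:c} to identify the support of the Euler-characteristic integrand with the image of $\mathcal{Z}_P$ under projection, and in the boundary case ruling out the possibility that only some components of $\partial\mathcal{Z}_P$ dominate $\bar{\Gamma}$. Once this geometric translation is in place, Steps 2 and 3 are textbook applications of miracle flatness to a CM source over a regular base, and Step 4 is a short diagram chase in Grothendieck--Serre duality.
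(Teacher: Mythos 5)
Your overall route is the same as the paper's: the paper proves this corollary by invoking the argument of \cite[Corollary 7.9]{Ku}, which is exactly your scheme of (i) converting $c^w_{u,v}(\bj)\neq 0$ into nonemptiness of the general fiber of $\pi$, (ii) miracle flatness over the smooth base $\bar\Gamma$ using the CM and dimension statements of \Cref{prop8.1} and \Cref{prop:bdyZ}, and (iii) deducing \eqref{eqn21} from the ideal-sheaf exact sequence and the fact that a CM subscheme of pure codimension one in a CM scheme has $\sext$ concentrated in degree one. Steps (ii) and (iii), and the semicontinuity argument producing generic purity of $\dim N_\gamma$, are fine. One slip in step (i): the support of the integrand $\pi^*(\idshf{\bb{P}_\bj}{\partial\bb{P}_\bj})\otimes(\widetilde{\xi^u_P\boxtimes\xi^v_P})\otimes\gamma_*\tilde\Delta_*\ss{(X_w^P)_\bb{P}}$ is not $\gamma\tilde\Delta((X^P_w)_\bb{P})\cap \operatorname{Im}(m)$ --- the image of $m$ is the $\bar\Gamma$-sweep of $(X^{u,v}_w(P))_\bj$, an open subset of $(X^2_w(P))_\bb{P}$, and nonemptiness of that intersection would only place some unrelated element $\gamma^{-1}\gamma'$ in the image of $\pi$. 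The correct support is $(X^{u,v}_P)_\bj\cap\gamma\tilde\Delta((X^P_w)_\bb{P})$ (the set $\bar N_\gamma$ of Section 10), which, because $\gamma\tilde\Delta((X^P_w)_\bb{P})\subset (X^2_w(P))_\bb{P}$, is precisely a copy of the fiber $\pi^{-1}(\gamma^{-1})$; with this correction, $c^w_{u,v}(\bj)\neq 0$ does force $\pi^{-1}(\gamma^{-1})\neq\emptyset$ for general $\gamma$, hence $\pi$ is dominant, and in fact surjective since $(X^{u,v}_w(P))_\bj$ is projective so $\pi$ is proper.

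The genuine gap is the boundary statement. There is no ``analogous dominance argument'' for $\pi_1$: the hypothesis $c^w_{u,v}(\bj)\neq 0$ is an Euler characteristic computed on $\mathcal{Z}_P$, and nothing in your argument controls whether any component of $\partial\mathcal{Z}_P$ dominates $\bar\Gamma$; you flag this yourself but leave it open, so the assertion that $M_\gamma$ is nonempty and of pure codimension one in $N_\gamma$ for general $\gamma$ when $|\bj|+l(w)-l(u)-l(v)>0$ is not established. (Note the issue you worried about --- only some components dominating --- is harmless: for general $\gamma$ the fiber avoids every non-dominating component, so CM-ness via miracle flatness is unaffected; what is missing is $M_\gamma\neq\emptyset$.) A clean way to close it: set-theoretically $M_\gamma=N_\gamma\cap\partial((X^{u,v}_P)_\bj)$, and by \Cref{sec:support}/\Cref{prop:nefbig} there is an ample line bundle $\mathcal{M}'$ on $(X^{u,v}_w(P))_\bj$ with a section vanishing exactly on $\partial((X^{u,v}_w(P))_\bj)$; equivalently, the complement of the boundary is an affine variety, being the trivialized bundle over $\bb{P}_\bj\setminus\partial\bb{P}_\bj\cong\bb{A}^{|\bj|}$ with fiber $(C^u_P\cap X^P_w)\times(C^v_P\cap X^P_w)$, a closed subscheme of the affine scheme $C^u_P\times C^v_P$. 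Hence every irreducible component of the projective scheme $N_\gamma$, having positive dimension by the first part, must meet the boundary, so $M_\gamma\neq\emptyset$; purity of codimension one then follows since each component of $M_\gamma$ has dimension at least $\dim\partial\mathcal{Z}_P-\dim\bar\Gamma$ while generic fiber-dimension bounds give the opposite inequality. With that supplement (and the support correction above) your proof is complete and coincides with the cited one.
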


\begin{proof} This follows the same argument as the proof of \cite[Corollary 7.9]{Ku}.
	\end{proof}

\section{Study of $R^pf_*(\omega_{\widetilde{\mathcal{Z}}}(\partial \widetilde{\mathcal{Z}}))$}

We continue to assume that $N$ is any fixed positive integer and $\mathbb{P} =(\mathbb{P}^N)^r$ is as in Section \ref{mixing}.

Throughout this section we assume $c_{u,v}^w(\bj)\neq 0$, where $c_{u,v}^w(\bj)$ is defined by the identity \eqref{equivcoeff:c}. We also follow the notation of the commutative diagram at the beginning of Section 8. 

\begin{lemma}\label{sec:support} For $u\in W^P$, the line bundle $\mathcal{L}^P(\hat{\rho}_Y)\vert_{X^u_P}$ has a section with zero set precisely equal to $\partial X^u_P$. In particular, \begin{equation*} \mathcal{L}^P(\hat{\rho}_Y)\vert_{X^u_w(P)}\sim \sum_i b_iX_i\hspace{4mm}\text{ for some } b_i>0,
		\end{equation*}
	where the $X_i$ are the irreducible components of $(\partial X^u_P)\cap X_w^P$. 
	\end{lemma}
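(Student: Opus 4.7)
The plan is to construct a $B^-$-eigensection of $\mathcal{L}^P(\hat{\rho}_Y)$ via the classical extremal-weight-vector construction (in the parabolic setting) and then to identify its zero locus on $X^u_P$ by combining $B^-$-invariance with a $T$-fixed point computation.

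First recall the numerical data: since $\rho(\alpha_i^\vee)=1$ for all $i$ and $\rho_Y(\alpha_i^\vee)=\delta_{i\in Y}$, one gets $\hat{\rho}_Y(\alpha_i^\vee)=1-\delta_{i\in Y}$. Thus $\hat{\rho}_Y$ is a dominant integral $P$-weight that is regular in each simple direction outside $Y$, and its stabilizer in $W$ under the usual action equals $W_P$.

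Next, construct the section. Let $L(\hat{\rho}_Y)$ be the integrable irreducible highest-weight $G$-module of highest weight $\hat{\rho}_Y$; as $\hat{\rho}_Y$ is a $P$-weight, this module extends to a $P$-module, and standard Kac--Moody theory (cf.\ \cite[Chapter 8]{K}) identifies a $G$-submodule of $H^0(\bar{X}_P,\mathcal{L}^P(\hat{\rho}_Y))$ with the restricted dual of $L(\hat{\rho}_Y)$. For each $u\in W^P$ the weight space $L(\hat{\rho}_Y)_{u\hat{\rho}_Y}$ is one-dimensional (being an extremal weight space), and dualising yields a $B^-$-eigensection $\sigma_u\in H^0(\bar{X}_P,\mathcal{L}^P(\hat{\rho}_Y))$ of $T$-weight $-u\hat{\rho}_Y$; by construction $\sigma_u(e_u)\neq 0$, where $e_u:=uP/P$.

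Now analyse $Z(\sigma_u)\cap X^u_P$. Because $\sigma_u$ is a $B^-$-eigensection, its zero locus inside $X^u_P$ is $B^-$-stable, hence a union of opposite Schubert cells $\{C^v_P : v\in W^P,\, v\geq u\}$; it therefore suffices to inspect $\sigma_u(e_v)$ at the $T$-fixed points. The fiber of $\mathcal{L}^P(\hat{\rho}_Y)$ at $e_v$ has $T$-weight $-v\hat{\rho}_Y$, so $\sigma_u(e_v)\neq 0$ would force $u\hat{\rho}_Y=v\hat{\rho}_Y$; by the stabilizer computation this means $v\in uW_P$, and among $u,v\in W^P$ this forces $v=u$. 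Consequently
\begin{equation*}
Z(\sigma_u)\cap X^u_P \;=\; \bigsqcup_{\substack{v\in W^P\\ v>u}} C^v_P \;=\; \partial X^u_P,
\end{equation*}
which is the first assertion. The ``in particular'' statement then follows by restricting $\sigma_u$ to $X^u_w(P)$: the effective Cartier divisor cut out by the resulting nonzero section of $\mathcal{L}^P(\hat{\rho}_Y)|_{X^u_w(P)}$ is $(\partial X^u_P)\cap X^P_w$, which on the normal variety $X^u_w(P)$ (\Cref{rich:props}) decomposes as $\sum_i b_i X_i$ with integers $b_i\geq 1$, where the $X_i$ are the irreducible components of $(\partial X^u_P)\cap X^P_w$.

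The main technical point will be justifying that the extremal-weight-vector construction actually yields a well-defined global section on the thick partial flag variety $\bar{X}_P$ (rather than on the ind-variety $X_P$ alone); this is standard in the $P=B$ case (cf.\ \cite{Ku}) and goes through for general $P$ once $\hat{\rho}_Y$ is observed to be a $P$-weight, so that $\mathcal{L}^P(\hat{\rho}_Y)$ descends from $\bar{X}$.
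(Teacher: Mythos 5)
Your construction of the section is the same as the paper's (the Borel--Weil image of the dual extremal weight vector $e^*_{u\hat{\rho}_Y}$), and your weight computation for $\hat{\rho}_Y$ and the fixed-point evaluation are correct. However, the step that actually determines the zero locus rests on a false claim: $\sigma_u=\chi(e^*_{u\hat{\rho}_Y})$ is \emph{not} a $B^-$-eigensection for $u\neq e$. The vector $e^*_{u\hat{\rho}_Y}$ is an extremal weight vector of the lowest-weight module $L(\hat{\rho}_Y)^\vee$, and it is not annihilated by $\mathfrak{n}^-$: since $u\hat{\rho}_Y\neq\hat{\rho}_Y$, the extremal vector $e_{u\hat{\rho}_Y}$ lies in $\mathfrak{n}^-L(\hat{\rho}_Y)$, so some negative root vector acts nontrivially on $e^*_{u\hat{\rho}_Y}$. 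A concrete check: for $G=SL_2$, $P=B$, $\hat{\rho}_Y=\varpi$, $u=s$, the zero set of $\chi(e^*_{s\varpi})$ on $\bar{X}=\mathbb{P}^1$ is the single point $eB$, which is not $B^-$-stable, so the section cannot be $B^-$-semiinvariant. Consequently your deduction ``the zero locus inside $X^u_P$ is $B^-$-stable, hence a union of opposite cells, hence detected at the $T$-fixed points'' is unjustified, and the fixed-point data alone cannot close the gap: a section of $\mathcal{L}^P(\hat{\rho}_Y)$ that is nonzero at every relevant $T$-fixed point may still vanish inside a cell (e.g.\ $\chi(e^*_{\varpi}+e^*_{s\varpi})$ on $X^e=\mathbb{P}^1$ in the $SL_2$ example).

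The correct way to finish --- and what the paper does, invoking \cite[Lemma 8.3.3 and Proposition 1.4.2(a)]{K} --- is a triangularity argument carried out cell by cell, with no equivariance claim about the section. Write a point of $C^v_P\subset X^u_P$ ($v\geq u$, $v\in W^P$) as $b^-vP$ with $b^-\in B^-$. Then $b^-\cdot ve_{\hat{\rho}_Y}$ lies in $\mathbb{C}^*\,ve_{\hat{\rho}_Y}+\sum_{\mu<v\hat{\rho}_Y}L(\hat{\rho}_Y)_\mu$, and since $\hat{\rho}_Y$ has stabilizer exactly $W_P$, one has $u\hat{\rho}_Y\leq v\hat{\rho}_Y$ if and only if $u\geq v$ for $u,v\in W^P$. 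Hence $e^*_{u\hat{\rho}_Y}\bigl(b^-\cdot ve_{\hat{\rho}_Y}\bigr)$ vanishes identically on $C^v_P$ for $v>u$ and is nowhere zero on $C^u_P$, which gives $Z(\sigma_u)\cap X^u_P=\partial X^u_P$ directly; your fixed-point computation is the (correct but insufficient) specialization of this to $b^-=e$. Your ``in particular'' step is fine once the first assertion is in place.
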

\begin{proof} Let $L(\hat{\rho}_Y)$ denote the integrable highest weight  $G^\text{min}$-module with highest weight $\hat{\rho}_Y$ and let 
$L(\hat{\rho}_Y)^\vee$ be its restricted dual, i.e., the direct sum of the dual of its weight spaces. Also, let $\mathcal{L}$ be the tautological line bundle over  $\bb{P}(L(\hat{\rho}_Y))$. 
Consider the linear map $\beta:L(\hat{\rho}_Y)^\vee\to H^0(\bb{P}(L(\hat{\rho}_Y)), \mathcal{L}^*)$, where $\beta(f)(x) = (x, f\vert_x)$ for $f\in L(\hat{\rho}_Y)^\vee$ and $x\in \bb{P}(L(\hat{\rho}_Y))$. Further,  let $i^*:H^0(\bb{P}(L(\hat{\rho}_Y)), \mathcal{L}^*)\to H^0(\bar{X}_P, \mathcal{L}^P(\hat{\rho}_Y))$ be induced from $i:\bar{X}_P\to \bb{P}(L(\hat{\rho}_Y))$ taking $gP \mapsto [g e_{\hat{\rho}_Y}]$, where 
$e_{\hat{\rho}_Y}$ is a highest weight vector in  $L(\hat{\rho}_Y)$. 
 Set $\chi:=i^*\circ \beta: L(\hat{\rho}_Y)^\vee\xrightarrow{\sim} H^0(\bar{X}_P, \mathcal{L}^P(\hat{\rho}_Y))$, which is  the Borel-Weil isomorphism (see \cite[\S 8.1.21]{K})  given by  $\chi(f)(gP) = [g, f(ge_{\hat{\rho}_Y})]$.

	Let $e_{u\hat{\rho}_Y}$ be the extremal weight vector of $L(\hat{\rho}_Y)$ with weight $u\hat{\rho}_Y$ and $e_{u\hat{\rho}_Y}^*$ the linear form which takes value 1 on $e_{u\hat{\rho}_Y}$ and 0 on any weight vector of weight different from $u\hat{\rho}_Y$. It is easy to see (from \cite[Lemma 8.3.3 and Proposition 1.4.2(a)]{K}) that the section $\chi(e_{u\hat{\rho}_Y}^*)\vert_{X^u_P}$ has zero set exactly $\partial X^u_P$. This proves the lemma. 
	\end{proof}

Recall that a $\bb{Q}$-Cartier $\bb{Q}$-divisor $D$ on an irreducible projective variety $X$ is said to be \emph{nef} (resp. \emph{big}) if $D$ has nonnegative intersection with every irreducible curve in $X$ (resp. $\dim(H^0(X, \mathcal{O}_X(mD)))>cm^{\dim(X)}$ for some $c>0$ and $m\gg 1$). Note that if $D$ is ample, then it is \emph{nef} and \emph{big} (cf. \cite[Proposition 2.61]{KM}). 

For a proper morphism $\pi:X\to Y$ between schemes with $X$ irreducible and a $\bb{Q}$-Cartier $\bb{Q}$-divisor $D$ on $X$, $D$ is said to be $\pi$-\emph{nef} (resp. $\pi$-\emph{big}) if $D$ has nonnegative intersection with every irreducible curve in $X$ contracted by $\pi$ (resp. rank $\pi_*\mathcal{O}_X(mD)>cm^n$ for some $c>0$ and $m\gg 1$, where $n$ is the dimension of a general fiber of $\pi$). 

\begin{proposition}\label{prop:nefbig}  There exists a nef and big line bundle $\mathcal{M}$ on $(Z^{u,v}_w)_\bj$ with a section with support $\partial((Z^{u,v}_w)_\bj)$, where $\partial((Z^{u,v}_w)_\bj)$ is defined to be the inverse image of $\partial((X^{u,v}_w(P))_\bj)$ under the canonical map $(Z^{u,v}_w)_\bj\to (X^{u,v}_w(P))_\bj$ induced by the $T$-equivariant map $\pi^{u,v}_w: Z^{u,v}_w\to X^{u,v}_w(P)$. Moreover, $\mathcal{M}$ can be chosen to be the pull-back of an ample line bundle $\mathcal{M}'$ on $(X^{u,v}_w(P))_\bj$. 
	\end{proposition}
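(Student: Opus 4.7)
My plan is to construct an ample line bundle $\mathcal{M}'$ on $(X^{u,v}_w(P))_\bj$ as a tensor of two pieces---one controlling the fiber boundary $(\partial X^{u,v}_P)\cap X^2_w(P)$, one controlling the base boundary $\partial\bb{P}_\bj$---and then to take $\mathcal{M}:=(\pi^{u,v}_w)^*\mathcal{M}'$, where the map $(Z^{u,v}_w)_\bj\to (X^{u,v}_w(P))_\bj$ is the product desingularization mixed over $\bb{P}_\bj$. In the fiber direction $\mathcal{L}^P(\hat{\rho}_Y)$ is the obvious candidate thanks to \Cref{sec:support}, and in the base direction the boundary divisor $\partial \bb{P}_\bj$ is itself ample, so the two pieces can simply be tensored together.

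For the fiber piece, I would apply \Cref{sec:support} to $u$ and $v$ separately to obtain $T$-semi-invariant sections $s_u$ and $s_v$ of $\mathcal{L}^P(\hat{\rho}_Y)|_{X^u_P}$ and $\mathcal{L}^P(\hat{\rho}_Y)|_{X^v_P}$ with zero loci $\partial X^u_P$ and $\partial X^v_P$. The external tensor $\mathcal{L}^P(\hat{\rho}_Y)\boxtimes \mathcal{L}^P(\hat{\rho}_Y)$ on $X^u_w(P)\times X^v_w(P)$, twisted by the inverse of the $T$-weight of $s_u\boxtimes s_v$ so as to absorb the character, carries an honest $T$-invariant section whose zero scheme is $(\partial X^{u,v}_P)\cap X^2_w(P)$. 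Mixing by $E(T)_\bb{P}$ and restricting to $\bb{P}_\bj$ then produces a line bundle $\mathcal{L}_1$ on $(X^{u,v}_w(P))_\bj$ whose canonical section vanishes exactly on $((\partial X^{u,v}_P)\cap X^2_w(P))_\bj$. Since $\hat{\rho}_Y$ is strictly dominant on every simple coroot $\alpha_i^\vee$ with $i\notin Y$, the line bundle $\mathcal{L}^P(\hat{\rho}_Y)$ is ample on every finite-dimensional Schubert variety $X^P_w$ (via the Borel--Weil embedding $\bar{X}_P\hookrightarrow \bb{P}(L(\hat{\rho}_Y))$ already used in the proof of \Cref{sec:support}), and hence ample on $X^u_w(P)\subset X^P_w$; so $\mathcal{L}_1$ is fiberwise ample over $\bb{P}_\bj$. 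For the base piece let $\mathcal{L}_2:=\ss{\bb{P}_\bj}(\partial\bb{P}_\bj)$, which equals the restriction of $\mathcal{O}(1,\ldots,1)$ and is therefore ample, with a canonical section vanishing precisely on $\partial\bb{P}_\bj$. Writing $p\colon(X^{u,v}_w(P))_\bj\to\bb{P}_\bj$ for the projection, I set $\mathcal{M}':=\mathcal{L}_1\otimes p^*\mathcal{L}_2^{\otimes N}$ for $N\gg 0$; this is ample on the projective scheme $(X^{u,v}_w(P))_\bj$ by the standard criterion that fiberwise ampleness together with a sufficiently large pullback of a base-ample bundle yields ampleness on the total space. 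The product of the two canonical sections then vanishes exactly on $((\partial X^{u,v}_P)\cap X^2_w(P))_\bj\cup (X^{u,v}_w(P))_{\partial\bb{P}_\bj}=\partial((X^{u,v}_w(P))_\bj)$.

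Finally I would define $\mathcal{M}:=(\pi^{u,v}_w)^*\mathcal{M}'$. Since $\pi^{u,v}_w\colon Z^{u,v}_w\to X^{u,v}_w(P)$ is proper and birational (being the product of the two desingularizations), the induced morphism $(Z^{u,v}_w)_\bj\to(X^{u,v}_w(P))_\bj$ is again proper birational between irreducible varieties of the same dimension; so $\mathcal{M}$ is nef (pullback of ample is nef) and big (since $h^0((Z^{u,v}_w)_\bj,\mathcal{M}^{\otimes m})\ge h^0((X^{u,v}_w(P))_\bj,(\mathcal{M}')^{\otimes m})$ grows at the maximal rate $m^{\dim(Z^{u,v}_w)_\bj}$). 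The pulled-back section has zero scheme the preimage of $\partial((X^{u,v}_w(P))_\bj)$, which is $\partial((Z^{u,v}_w)_\bj)$ by the definition in the statement. The main technical obstacle I expect is Step 1: verifying genuine ampleness (rather than merely semiampleness) of $\mathcal{L}^P(\hat{\rho}_Y)|_{X^u_w(P)}$ in the Kac--Moody setting, and carefully tracking the $T$-character twist so that the $T$-semi-invariant sections produced by \Cref{sec:support} descend to honest sections on the mixing space.
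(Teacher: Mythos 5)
Your proposal is correct and follows essentially the same route as the paper: the paper likewise takes the mixed bundle built from $\mathcal{L}^P(\hat{\rho}_Y)\boxtimes\mathcal{L}^P(\hat{\rho}_Y)$ twisted by the character $e^{u\hat{\rho}_Y+v\hat{\rho}_Y}$ (so the Borel--Weil sections of Lemma \ref{sec:support} descend to the mixing space), tensors with a large power of the pullback of an ample bundle on $\bb{P}_\bj$ having a section supported on $\partial\bb{P}_\bj$, verifies ampleness of the resulting $\mathcal{M}'$ on $(X^{u,v}_w(P))_\bj$ by the same ``fiberwise ample plus large base twist'' criterion (citing Koll\'ar--Mori), and concludes nefness and bigness of the pullback $\mathcal{M}$ under the proper birational map exactly as you do.
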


\begin{proof} Let $\mathcal{H}$ be an ample line bundle on $\bb{P}_\bj$ with a section with support precisely $\partial\bb{P}_\bj$. Let $\mathcal{L}^P_{Z^{u,v}_w}(\hat{\rho}_Y\boxtimes\hat{\rho}_Y)$ be the pull-back of the line bundle $\mathcal{L}^P(\hat{\rho}_Y)\boxtimes\mathcal{L}^P(\hat{\rho}_Y)$ on $\bar{X}_P\times\bar{X}_P$ via the morphism $Z^{u,v}_w\to \bar{X}_P\times\bar{X}_P$. The line bundle $e^{u\hat{\rho}_Y+v\hat{\rho}_Y}\mathcal{L}^P_{Z^{u,v}_w}(\hat{\rho}_Y\boxtimes\hat{\rho}_Y)$ is $T$-equivariant, hence we have the line bundle \[\tilde{\mathcal{L}}^P_{Z^{u,v}_w}(-\hat{\rho}_Y\boxtimes-\hat{\rho}_Y):=E(T)_\bj \times^Te^{u\hat{\rho}_Y+v\hat{\rho}_Y}\mathcal{L}^P_{Z^{u,v}_w}(\hat{\rho}_Y\boxtimes\hat{\rho}_Y)\to (Z^{u,v}_w)_\bj.\]
	
Set $\mathcal{M}:=\tilde{\mathcal{L}}^P_{Z^{u,v}_w}(-\hat{\rho}_Y\boxtimes-\hat{\rho}_Y)\otimes\pi^*(\mathcal{H}^{N'})$, where $\pi$ is the standard quotient map $E(T)_\bj \times^T Z^{u,v}_w\to \bb{P}_\bj$ and $N'\gg0$. Take $\theta:(Z^{u,v}_w)_\bj\to \tilde{\mathcal{L}}^P_{Z^{u,v}_w}(-\hat{\rho}_Y\boxtimes-\hat{\rho}_Y)$ to be the section given by \[[e,z]\mapsto [e,1_{u\hat{\rho}_Y+v\hat{\rho}_Y}\otimes (\bar{\chi}(e^*_{u\hat{\rho}_Y})\boxtimes\bar{\chi}(e^*_{v\hat{\rho}_Y}))(z)],\] where $e\in E(T)_\bj,\ z\in Z^{u,v}_w,\ 1_{u\hat{\rho}_Y+v\hat{\rho}_Y}$ is the constant section of the trivial line bundle over $Z^{u,v}_w$ with $H$ action given by $u\hat{\rho}_Y+v\hat{\rho}_Y$, and $\bar{\chi}\boxtimes\bar{\chi}$ is the pull-back of the Borel-Weil isomorphism as in the proof of \Cref{sec:support}. Let $\sigma$ be any section of $\mathcal{H}^{N'}$ with support precisely $\partial\bb{P}_\bj$ and $\hat{\sigma}$ its pull-back to $(Z^{u,v}_w)_\bj$. It is easy to see that the support of $\theta\otimes \hat{\sigma}$ is precisely $\partial((Z^{u,v}_w)_\bj)$. 

Moreover, the line bundle $\mathcal{M}$ is the pull-back of the line bundle $\mathcal{M}':=\tilde{\mathcal{L}}'(-\hat{\rho}_Y\boxtimes-\hat{\rho}_Y)\otimes\pi_1^*(\mathcal{H}^{N'})$ on $E(T)_\bj\times^T X^{u,v}_w(P)$ via the morphism 
\[\pi: E(T)_\bj\times^T Z^{u,v}_w\to E(T)_\bj\times^T X^{u,v}_w(P),\] where $\pi_1$ is the standard quotient map $E(T)_\bj\times^TX^{u,v}_w(P)\to\bb{P}_\bj$ and \[\tilde{\mathcal{L}}'(-\hat{\rho}_Y\boxtimes-\hat{\rho}_Y):=E(T)_\bj\times^T \left(e^{u\hat{\rho}_Y+v\hat{\rho}_Y}(\mathcal{L}^P(\hat{\rho}_Y)\boxtimes\mathcal{L}^P(\hat{\rho}_Y))\vert_{X^{u,v}_w(P)}\right).\]
From \cite[Proposition 1.45 and Theorems 1.37 and 1.42]{KM}, $\mathcal{M}'$ is ample on $(X^{u,v}_w(P))_\bj$ for large enough $N'>0$. Since $\pi$ is a birational morphism and $\mathcal{M}'$ is ample, $\mathcal{M}$ is nef and big by \cite[\S 1.29]{D}.
	\end{proof}

\begin{proposition}\label{thm:kv} For $\tilde{\pi}:\widetilde{\mathcal{Z}}\to \bar{\Gamma}$, \[R^p\tilde{\pi}_*(\omega_{\widetilde{\mathcal{Z}}}(\partial\widetilde{\mathcal{Z}}))= 0\hspace{5mm}\text{for all} \,\, p>0,\] where $\partial\widetilde{\mathcal{Z}}:=f^{-1}(\partial \mathcal{Z}_P)$ and $\omega_{\widetilde{\mathcal{Z}}}(\partial\widetilde{\mathcal{Z}}):=\shom_{\ss{\widetilde{\mathcal{Z}}}}(\idshf{\widetilde{\mathcal{Z}}}{\partial\widetilde{\mathcal{Z}}},\omega_{\widetilde{\mathcal{Z}}}).$ Here we take $\partial \mathcal{Z}_P$ with the reduced scheme structure.

(Observe that $f$ being a desingularization of a normal scheme $\mathcal{Z}_P$ and $\partial \mathcal{Z}_P$ being reduced, $\partial \widetilde{\mathcal{Z}}$ is also a reduced scheme.)	
\end{proposition}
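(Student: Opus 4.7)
The plan is to apply the relative Kawamata-Viehweg vanishing theorem to the proper morphism $\tilde{\pi}: \widetilde{\mathcal{Z}} \to \bar{\Gamma}$, paralleling the $G/B$ case in \cite{Ku}. First, observe that $\widetilde{\mathcal{Z}}$ is smooth: it is the fiber product of the smooth variety $\bar{\Gamma}\times (Z^{u,v}_w)_\bj$ with $\widetilde{\Delta}((Z_w)_\bb{P})$ over $(Z^2_w)_\bb{P}$ along the smooth morphism $\tilde{m}$. Let $q:=p_2\circ\tilde{i}: \widetilde{\mathcal{Z}} \to (Z^{u,v}_w)_\bj$. Since $\tilde{m}$ is smooth, $\partial\widetilde{\mathcal{Z}}$ coincides with the scheme-theoretic preimage $q^{-1}(\partial((Z^{u,v}_w)_\bj))$; in particular it is a divisor, and should inherit simple normal crossings from the boundary divisor on the desingularization $Z^{u,v}_w$.

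Second, I would construct a relatively nef and big line bundle on $\widetilde{\mathcal{Z}}$ by pulling back $\mathcal{M}$ from \Cref{prop:nefbig}: set $\hat{\mathcal{M}}:=q^*\mathcal{M}$. The distinguished section of $\mathcal{M}$ with zero locus $\partial((Z^{u,v}_w)_\bj)$ pulls back to a section of $\hat{\mathcal{M}}$ with zero locus $\partial\widetilde{\mathcal{Z}}$, so $\hat{\mathcal{M}}\cong \ss{\widetilde{\mathcal{Z}}}(\sum d_i\bar{D}_i)$, where the $\bar{D}_i$ are the distinct irreducible components of $\partial\widetilde{\mathcal{Z}}$ and $d_i\in\bb{Z}_{>0}$. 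To see that $\hat{\mathcal{M}}$ is $\tilde{\pi}$-nef and $\tilde{\pi}$-big, recall from \Cref{prop:nefbig} that $\mathcal{M}$ is the pullback of an ample line bundle $\mathcal{M}'$ on $(X^{u,v}_w(P))_\bj$ via the birational morphism $\pi^{u,v}_w: Z^{u,v}_w\to X^{u,v}_w(P)$. By \Cref{coro8.5}, for general $\gamma\in\bar{\Gamma}$ the fiber $N_\gamma=\tilde{\pi}^{-1}(\gamma)$ is irreducible of dimension $|\bj|+l(w)-l(u)-l(v)$, and via the composite $\pi^{u,v}_w\circ q$ (and the birationality of $f$ from Proposition \ref{prop8.1}) it maps generically finitely onto a subvariety of $(X^{u,v}_w(P))_\bj$ of the same dimension. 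The restriction of $\mathcal{M}'$ to this subvariety is ample, so $\hat{\mathcal{M}}|_{N_\gamma}$ is nef and big, establishing the claim.

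Finally, choose $\epsilon>0$ small enough that $\epsilon d_i<1$ for all $i$ and put $N:=\epsilon\sum d_i\bar{D}_i$, a $\bb{Q}$-divisor with $\lceil N\rceil=\partial\widetilde{\mathcal{Z}}$ and fractional part $\{N\}=N$ of SNC support. Since $N\sim_{\bb{Q}}\epsilon\hat{\mathcal{M}}$ is $\tilde{\pi}$-nef and $\tilde{\pi}$-big, relative Kawamata-Viehweg vanishing yields
\[R^p\tilde{\pi}_*\bigl(\omega_{\widetilde{\mathcal{Z}}}\otimes\ss{\widetilde{\mathcal{Z}}}(\lceil N\rceil)\bigr)=R^p\tilde{\pi}_*\bigl(\omega_{\widetilde{\mathcal{Z}}}(\partial\widetilde{\mathcal{Z}})\bigr)=0\quad\text{for all }p>0.\]
The main obstacle is the SNC condition on $\partial\widetilde{\mathcal{Z}}$: one must verify that the boundary divisor on the desingularization $Z^u_w$ from \cite[Theorem 6.8]{Ku} has simple normal crossings, or, failing that, pass to a further log resolution $\widetilde{\mathcal{Z}}'\to\widetilde{\mathcal{Z}}$ and descend the vanishing via Grauert-Riemenschneider applied to the resolution map together with a Leray spectral sequence argument. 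The remaining ingredients (smoothness of $\widetilde{\mathcal{Z}}$, the nef-big line bundle, and the fiber-dimension count) are in hand from Sections 8 and 9.
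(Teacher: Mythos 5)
Your overall strategy is exactly the paper's: pull the nef-and-big bundle of \Cref{prop:nefbig} back to the smooth scheme $\widetilde{\mathcal{Z}}$, write its divisor as a positive integral combination of the irreducible components of $\partial\widetilde{\mathcal{Z}}$, check that this class is $\tilde{\pi}$-nef and $\tilde{\pi}$-big (the paper does this via the $\pi$-bigness of $i^*(\epsilon\boxtimes\mathcal{M}')$ together with birationality of $f$ and equality of general fiber dimensions of $\pi$ and $\tilde{\pi}$, which is the same content as your generic-finiteness argument), and then apply relative Kawamata--Viehweg vanishing; your small-$\epsilon$ $\bb{Q}$-divisor formulation is equivalent to the paper's appeal to \cite[Theorem 8.3]{Ku} with $\mathcal{L}=\ss{\widetilde{\mathcal{Z}}}(\partial\widetilde{\mathcal{Z}})$ and $D=\sum_i(N-b_i)\widetilde{Z}_i$.

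The one step you leave open is not actually an obstacle, and your fallback would not work as stated. The top Cartesian square of the diagram in Section 8 involves only $\bar{\Gamma}\times(Z^{u,v}_w)_\bj$, $(Z^2_w)_\bb{P}$ and $\widetilde{\Delta}((Z_w)_\bb{P})$, so $\widetilde{\mathcal{Z}}$ and the divisors $\widetilde{Z}_i:=(\bar{\Gamma}\times Z_i)\times_{(Z^2_w)_\bb{P}}\widetilde{\Delta}((Z_w)_\bb{P})$ coincide with those of the $G/B$ case; by \cite[Proof of Proposition 8.4]{Ku} each $\widetilde{Z}_i$ is a smooth irreducible divisor and any intersection $\widetilde{Z}_{i_1}\cap\cdots\cap\widetilde{Z}_{i_q}$ is smooth of pure codimension $q$, so $\partial\widetilde{\mathcal{Z}}=\sum_i\widetilde{Z}_i$ is already simple normal crossing and no further log resolution is needed. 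If one did pass to a log resolution $g$, Grauert--Riemenschneider alone would only control $R^pg_*\omega$, not $R^pg_*$ of $\omega$ twisted by the reduced boundary; descending the vanishing would require a log analogue plus the identification of the pushforward with $\omega_{\widetilde{\mathcal{Z}}}(\partial\widetilde{\mathcal{Z}})$, i.e., essentially a repetition of the argument of \Cref{acyclic}. Two further small slips: \Cref{coro8.5} is about the fibers $N_\gamma=\pi^{-1}(\gamma)\subset\mathcal{Z}_P$ and asserts CM-ness and pure dimension, not irreducibility of $\tilde{\pi}$-fibers (only the dimension count is needed for $\tilde{\pi}$-bigness); and relative nefness must be tested on all curves contracted by $\tilde{\pi}$, not merely on general fibers --- this holds here simply because $q^*\mathcal{M}$ is the pullback of a nef bundle, which is how the paper argues.
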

\begin{proof} As guaranteed by  \Cref{prop:nefbig}, let $\mathcal{M}$ be a nef and big line bundle on $(Z^{u,v}_w)_\bj$ with divisor $\sum_{i=1}^d b_i Z_i$ (where $b_i>0$ for all $i$) supported precisely in $\partial((Z^{u,v}_w)_\bj)$. Further,  $\mathcal{M}$ can be taken to be  the pull-back of an ample line bundle $\mathcal{M}'$ on $(X^{u,v}_w(P))_\bj$. Let $N_1$ be an integer so that $N_1>b_i$ for all $i$. 
	
	By the proof of Proposition \ref{prop:nefbig}, the line bundle $\epsilon\boxtimes \mathcal{M}'$ over $\bar{\Gamma}\times (X^{u,v}_w(P))_\bj$ restricted to $\mathcal{Z}_P$ via $i$ has a section with support $\partial \mathcal{Z}_P$, where $\partial \mathcal{Z}_P$ is defined in Proposition \ref{prop:bdyZ} and $\epsilon$ is the trivial line bundle over $\bar{\Gamma}$. Hence, the pull-back of the line bundle 
	$f^*(i^*(\epsilon \boxtimes \mathcal{M}'))$ over 	$\widetilde{\mathcal{Z}}$ has a section with support $f^{-1}(\partial \mathcal{Z}_P)=\partial \widetilde{\mathcal{Z}}$.			
	Since $\partial\widetilde{\mathcal{Z}}$ is the zero set of a line bundle on $\widetilde{\mathcal{Z}}$, $\partial\widetilde{\mathcal{Z}}$ is a pure scheme of codimension 1 in $\widetilde{\mathcal{Z}}$. Let $\mathcal{L}$ be the line bundle on the smooth scheme $\widetilde{\mathcal{Z}}$ associated to the reduced divisor $\partial\widetilde{\mathcal{Z}}$ and let $D$ be the divisor $\sum_i (N_1-b_i)\widetilde{Z}_i$ on $\widetilde{\mathcal{Z}}$, where \[\widetilde{\mathcal{Z}}_i:=(\bar{\Gamma}\times Z_i)\times_{(Z^2_w)_\bb{P}} \tilde{\Delta}((Z_w)_\bb{P}).\] 
	Then, as proved in \cite[Proof of Proposition 8.4]{Ku},  each $\widetilde{Z}_i$ is a smooth irreducible divisor of $\widetilde{\mathcal{Z}}$, and for any collection $\widetilde{Z}_{i_1},\cdots, \widetilde{Z}_{i_q}$, $1\leq i_1<\cdots <i_q\leq d$, the intersection $\bigcap_{j = 1}^q \widetilde{Z}_{i_j}$ (if non-empty) is smooth and of pure codimension $q$ in $\widetilde{\mathcal{Z}}$. Observe that $\widetilde{\mathcal{Z}}$ here coincides with the same in \cite[Proposition 8.4]{Ku}. Moreover, the collection $\{\widetilde{\mathcal{Z}}_i\}$ of divisors here is a subcollection of the divisors in 	\cite[Proposition 8.4]{Ku} since the inverse image of 	 $\partial ((X^{u,v}_w(P))_\bj)$	in $(X_w^{u, v})_\bj$ is union of certain irreducible divisors in 	 $\partial ((X^{u,v}_w)_\bj)$.	
	Hence, the $\widetilde{Z}_i$'s are all distinct and $\partial\widetilde{\mathcal{Z}} = \sum \widetilde{Z}_i$ is a simple normal crossing divisor. Clearly, \[\mathcal{L}^{N_1}(-D) =\ss{\widetilde{\mathcal{Z}}}\left(\sum_i b_i\widetilde{Z}_i\right)\simeq \tilde{i}^*\left(\ss{\bar{\Gamma}\times (Z^{u,v}_w)_\bj}\left(\sum_i b_i(\bar{\Gamma}\times Z_i)\right)\right).\]
	By \cite[\S 1.6]{D}, since $\sum_i b_iZ_i$ is a nef divisor (by assumption), and since $\tilde{i}$ is injective, $\mathcal{L}^{N_1}(-D)$ is $\tilde{\pi}$-nef. 
	
	We next show that $\mathcal{L}^{N_1}(-D)$ is $\tilde{\pi}$-big. 
	Since $\mathcal{M}$ was chosen to be the pull-back of an ample line bundle $\mathcal{M}'$ on $(X^{u,v}_w(P))_\bj$, we have that $\mathcal{L}^{N_1}(-D)$ is the pull-back of the line bundle $\mathcal{S}:=i^*(\epsilon\boxtimes \mathcal{M}')$ on $\mathcal{Z}_P$ via $f$. But $\mathcal{M}'$ being ample implies $\mathcal{S}$ is $\pi$-big. Further, since $f$ is birational, the fibers of $\tilde{\pi}$ for general $\gamma$ have the same dimension as the fibers of $\pi$ (use \cite[Ch. I, $\S$6.3, Theorem 1.25]{S}). Hence $\mathcal{L}^{N_1}(-D)$ is  $\tilde{\pi}$-big. 
	
	Since $f$ is proper and birational it is surjective. We also have that $\pi$ is surjective (cf. Corollary \ref{coro8.5}), hence so is $\tilde{\pi}$. Now apply \cite[Theorem 8.3]{Ku}.
	\end{proof}

\begin{theorem}\label{acyclic} For the morphism $f:\widetilde{\mathcal{Z}}\to \mathcal{Z}_P$,
	\begin{enumerate}\item[(a)] $R^pf_*(\omega_{\widetilde{\mathcal{Z}}}(\partial \widetilde{\mathcal{Z}}))=0$ for all $p>0$, and
		\item[(b)] $f_*(\omega_{\widetilde{\mathcal{Z}}}(\partial\widetilde{\mathcal{Z}})) = \omega_{\mathcal{Z}_P}(\partial \mathcal{Z}_P)$,
		where  $ \omega_{\mathcal{Z}_P}(\partial \mathcal{Z}_P):= 	\shom_{\ss{{\mathcal{Z}_P}}}(\idshf{\mathcal{Z}_P}{\partial\mathcal{Z}_P},\omega_{\mathcal{Z}_P}).$
\end{enumerate}
	\end{theorem}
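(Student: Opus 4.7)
The plan is to deduce (a) by applying the same relative Kawamata--Viehweg style vanishing theorem used in the proof of Proposition~\ref{thm:kv} (namely \cite[Theorem 8.3]{Ku}) to the morphism $f:\widetilde{\mathcal{Z}}\to\mathcal{Z}_P$ in place of $\tilde{\pi}:\widetilde{\mathcal{Z}}\to\bar{\Gamma}$. Most of the setup transfers verbatim: $\widetilde{\mathcal{Z}}$ is smooth, $\partial\widetilde{\mathcal{Z}}=\sum_i\widetilde{Z}_i$ is simple normal crossing (by the proof of Proposition~\ref{thm:kv}), and $f$ is proper and surjective. The essential new input is that, by the last paragraph of the proof of Proposition~\ref{thm:kv}, the line bundle $\mathcal{L}^N(-D)$ is identified with the pull-back $f^*\mathcal{S}$ of the line bundle $\mathcal{S}:=i^*(\epsilon\boxtimes\mathcal{M}')$ on $\mathcal{Z}_P$, where $\mathcal{M}'$ is ample (Proposition~\ref{prop:nefbig}). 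Being a pull-back along $f$, the line bundle $\mathcal{L}^N(-D)$ has zero intersection with every curve contracted by $f$, hence is $f$-nef; and $f$-bigness is automatic because $f$ is birational, and by rational singularities of $\mathcal{Z}_P$ (Proposition~\ref{ratsings}) together with the projection formula,
\[
f_*\bigl((\mathcal{L}^N(-D))^{\otimes m}\bigr)=\mathcal{S}^{\otimes m}\otimes f_*\mathcal{O}_{\widetilde{\mathcal{Z}}}=\mathcal{S}^{\otimes m}.
\]
Thus \cite[Theorem 8.3]{Ku} applies to $f$ and yields (a).

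For (b), the plan is to combine (a) with Grothendieck--Serre duality for $f$. Since $\mathcal{I}_{\partial\widetilde{\mathcal{Z}}}$ is a line bundle on the smooth $\widetilde{\mathcal{Z}}$, one has $\omega_{\widetilde{\mathcal{Z}}}(\partial\widetilde{\mathcal{Z}})=R\shom_{\widetilde{\mathcal{Z}}}(\mathcal{I}_{\partial\widetilde{\mathcal{Z}}},\omega_{\widetilde{\mathcal{Z}}})$, and by (a) the pushforward $Rf_*\omega_{\widetilde{\mathcal{Z}}}(\partial\widetilde{\mathcal{Z}})$ is concentrated in degree $0$; duality then gives
\[
f_*\omega_{\widetilde{\mathcal{Z}}}(\partial\widetilde{\mathcal{Z}})=R\shom_{\mathcal{Z}_P}\bigl(Rf_*\mathcal{I}_{\partial\widetilde{\mathcal{Z}}},\omega_{\mathcal{Z}_P}\bigr).
\]
Applying $Rf_*$ to the short exact sequence $0\to\mathcal{I}_{\partial\widetilde{\mathcal{Z}}}\to\mathcal{O}_{\widetilde{\mathcal{Z}}}\to\mathcal{O}_{\partial\widetilde{\mathcal{Z}}}\to 0$ and using $Rf_*\mathcal{O}_{\widetilde{\mathcal{Z}}}=\mathcal{O}_{\mathcal{Z}_P}$ (Proposition~\ref{ratsings}), this reduces to showing the ``boundary rational resolution'' identity
\[
Rf_*\mathcal{O}_{\partial\widetilde{\mathcal{Z}}}=\mathcal{O}_{\partial\mathcal{Z}_P},\qquad(\ast)
\]
which then yields $Rf_*\mathcal{I}_{\partial\widetilde{\mathcal{Z}}}=\mathcal{I}_{\partial\mathcal{Z}_P}$. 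Finally, the CM-ness of $\mathcal{Z}_P$ (Proposition~\ref{prop8.1}) and of $\partial\mathcal{Z}_P$ of pure codimension $1$ in $\mathcal{Z}_P$ (Proposition~\ref{prop:bdyZ}) force $\sext^i(\mathcal{O}_{\partial\mathcal{Z}_P},\omega_{\mathcal{Z}_P})=0$ for $i\ne 1$, and the long exact sequence obtained by applying $R\shom(-,\omega_{\mathcal{Z}_P})$ to $0\to\mathcal{I}_{\partial\mathcal{Z}_P}\to\mathcal{O}_{\mathcal{Z}_P}\to\mathcal{O}_{\partial\mathcal{Z}_P}\to 0$ gives $\sext^i(\mathcal{I}_{\partial\mathcal{Z}_P},\omega_{\mathcal{Z}_P})=0$ for all $i\geq 1$, so that $R\shom(\mathcal{I}_{\partial\mathcal{Z}_P},\omega_{\mathcal{Z}_P})=\shom(\mathcal{I}_{\partial\mathcal{Z}_P},\omega_{\mathcal{Z}_P})=\omega_{\mathcal{Z}_P}(\partial\mathcal{Z}_P)$ by definition, proving (b).

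The main obstacle is establishing $(\ast)$. I would attack this by exploiting the SNC structure $\partial\widetilde{\mathcal{Z}}=\bigcup_i\widetilde{Z}_i$---where, by the proof of Proposition~\ref{thm:kv}, each stratum $\widetilde{Z}_{i_1}\cap\cdots\cap\widetilde{Z}_{i_k}$ is smooth---via a Mayer--Vietoris/\v{C}ech spectral sequence that computes $Rf_*\mathcal{O}_{\partial\widetilde{\mathcal{Z}}}$ out of the $Rf_*\mathcal{O}_{\widetilde{Z}_{i_1}\cap\cdots\cap\widetilde{Z}_{i_k}}$. Each stratum maps properly and birationally onto a piece of $\partial\mathcal{Z}_P$ built out of Richardson varieties and their boundaries; Kempf's theorem combined with the rational singularities of Richardson varieties (Lemma~\ref{rich:props}) and the CM property of Schubert boundaries (the remark after Lemma~\ref{CM:bdy}) should show that each such morphism is a rational resolution, and matching the assembled spectral sequence with the analogous \v{C}ech decomposition of $\mathcal{O}_{\partial\mathcal{Z}_P}$ should yield~$(\ast)$.
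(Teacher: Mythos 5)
Your part (a) is essentially the paper's own argument: the paper also deduces (a) by applying \cite[Theorem 8.3]{Ku} to $f$, getting $f$-nefness of $\mathcal{L}^N(-D)$ from the $\tilde{\pi}$-setup of \Cref{thm:kv} (your variant, via the identification $\mathcal{L}^N(-D)=f^*\mathcal{S}$ with $\mathcal{S}$ pulled back from the ample $\mathcal{M}'$ of \Cref{prop:nefbig}, is fine) and $f$-bigness from birationality of $f$. The problem is in (b). Your duality argument stands or falls with the identity $(\ast)$: $Rf_*\ss{\partial\widetilde{\mathcal{Z}}}=\ss{\partial\mathcal{Z}_P}$, and you never prove it; the final paragraph is a plan (``should show'', ``should yield''), not an argument. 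Note that $(\ast)$ is a genuinely strong statement: it says that $f$ restricts to a rational-resolution--type morphism of the reducible, non-normal boundary $\partial\mathcal{Z}_P$, i.e.\ essentially that $(\mathcal{Z}_P,\partial\mathcal{Z}_P)$ behaves like a rational pair. The proposed \v{C}ech/Mayer--Vietoris attack presupposes that each intersection stratum $\widetilde{Z}_{i_1}\cap\cdots\cap\widetilde{Z}_{i_q}$ maps, with vanishing higher direct images and $f_*\ss{}=\ss{}$, onto a matching stratum of a compatible decomposition of $\partial\mathcal{Z}_P$, so that the two spectral sequences can be compared term by term. Nothing in the paper (or in your sketch) provides such a matching: the $\widetilde{Z}_i$ arise from the zero scheme of a section pulled back through the desingularizations $Z^{u}_w\times Z^{v}_w\to X^{u}_w(P)\times X^{v}_w(P)$, so some $\widetilde{Z}_i$, and more likely some of their intersections, can be $f$-exceptional, mapping into higher-codimension loci of $\partial\mathcal{Z}_P$, in which case the naive stratum-by-stratum comparison breaks down and extra vanishing arguments would be needed. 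As it stands, (b) is not proved.

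For comparison, the paper gets (b) without any statement about $\ss{\partial\widetilde{\mathcal{Z}}}$ and without derived duality: since $\partial\widetilde{\mathcal{Z}}=f^{-1}(\partial\mathcal{Z}_P)$ with reduced structure, the natural map $f^*(\idshf{\mathcal{Z}_P}{\partial\mathcal{Z}_P})\to\idshf{\widetilde{\mathcal{Z}}}{\partial\widetilde{\mathcal{Z}}}$ is surjective with kernel a torsion sheaf (as $f$ is a desingularization), so dualizing gives $\shom_{\ss{\widetilde{\mathcal{Z}}}}(\idshf{\widetilde{\mathcal{Z}}}{\partial\widetilde{\mathcal{Z}}},\ss{\widetilde{\mathcal{Z}}})\simeq\shom_{\ss{\widetilde{\mathcal{Z}}}}(f^*(\idshf{\mathcal{Z}_P}{\partial\mathcal{Z}_P}),\ss{\widetilde{\mathcal{Z}}})$; then the adjunction $f_*\shom_{\ss{\widetilde{\mathcal{Z}}}}(f^*\mathcal{F},\omega_{\widetilde{\mathcal{Z}}})\simeq\shom_{\ss{\mathcal{Z}_P}}(\mathcal{F},f_*\omega_{\widetilde{\mathcal{Z}}})$ together with $f_*\omega_{\widetilde{\mathcal{Z}}}=\omega_{\mathcal{Z}_P}$ (from \Cref{ratsings} and \cite[Theorem 5.10]{KM}) yields $f_*(\omega_{\widetilde{\mathcal{Z}}}(\partial\widetilde{\mathcal{Z}}))=\omega_{\mathcal{Z}_P}(\partial\mathcal{Z}_P)$ directly. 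So to repair your write-up, either supply a complete proof of $(\ast)$ (a substantial task, not reducible to citing Lemma \ref{rich:props} and the CM facts), or replace the reduction to $(\ast)$ by this underived $\shom$-adjunction argument; your final Ext-vanishing step using \Cref{prop8.1} and \Cref{prop:bdyZ} is correct but is only needed in your route, not in the paper's.
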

\begin{proof} For (a), note that $f$ is surjective as observed above. As in the proof of the previous proposition, $\mathcal{L}^N(-D)$ is $\tilde{\pi}$-nef and $\tilde{\pi}$-big. Now the fibers of $\tilde{\pi}$ contain the fibers of $f$, so that $\mathcal{L}^N(-D)$ is also $f$-nef. Since $f$ is birational, we also have that $\mathcal{L}^N(-D)$ is $f$-big. Then (a) follows from \cite[Theorem 8.3]{Ku}. 
	
	(b) Since $\partial\widetilde{\mathcal{Z}}=f^{-1}(\partial\mathcal{Z}_P)$ is the scheme-theoretic inverse image with the reduced scheme structure on $\partial \mathcal{Z}_P$, the morphism \[f^*(\idshf{\mathcal{Z}_P}{\partial\mathcal{Z}_P})\to \idshf{\widetilde{\mathcal{Z}}}{\partial\widetilde{\mathcal{Z}}}\] is surjective 
	(cf. \cite[Tag 01HJ, Lemma 25.4.7]{stack}) with kernel supported on a proper closed subset of $\widetilde{\mathcal{Z}}$ (since $f$ is a desingularization). Thus the kernel is a torsion sheaf and so the dual map \begin{equation}\label{dualhom}\ss{\widetilde{\mathcal{Z}}}(\partial\widetilde{\mathcal{Z}})\to \shom_{\ss{\widetilde{\mathcal{Z}}}}(f^*(\idshf{\mathcal{Z}_P}{\partial\mathcal{Z}_P}),\ss{\widetilde{\mathcal{Z}}})
	\end{equation} is an isomorphism, where $\ss{\widetilde{\mathcal{Z}}}(\partial\widetilde{\mathcal{Z}}):=\shom_{\ss{\widetilde{\mathcal{Z}}}}(\idshf{\widetilde{\mathcal{Z}}}{\partial\widetilde{\mathcal{Z}}},\ss{\widetilde{\mathcal{Z}}})$. 
	
	Finally, \begin{align*} f_*(\omega_{\widetilde{\mathcal{Z}}}(\partial\widetilde{\mathcal{Z}})) & = f_*(\omega_{\widetilde{\mathcal{Z}}}\otimes \shom_{\ss{\widetilde{\mathcal{Z}}}}(\idshf{\widetilde{\mathcal{Z}}}{\partial\widetilde{\mathcal{Z}}},\ss{\widetilde{\mathcal{Z}}}))\\
		& = f_*(\omega_{\widetilde{\mathcal{Z}}}\otimes \shom_{\ss{\widetilde{\mathcal{Z}}}}(f^*(\idshf{\mathcal{Z}_P}{\partial\mathcal{Z}_P}),\ss{\widetilde{\mathcal{Z}}}))\hspace{5mm} \text{by \eqref{dualhom}}\\
		& = f_*\shom_{\ss{\widetilde{\mathcal{Z}}}}(f^*(\idshf{\mathcal{Z}_P}{\partial\mathcal{Z}_P}),\omega_{\widetilde{\mathcal{Z}}})\\
		& =\shom_{\ss{{\mathcal{Z}_P}}}(\idshf{\mathcal{Z}_P}{\partial\mathcal{Z}_P},f_*\omega_{\widetilde{\mathcal{Z}}})\hspace{5mm} \text{by adjunction \cite[Ch. II, \S 5]{H}}\\
		& = \shom_{\ss{{\mathcal{Z}_P}}}(\idshf{\mathcal{Z}_P}{\partial\mathcal{Z}_P},\omega_{\mathcal{Z}_P}) \hspace{5mm} \text{by \Cref{ratsings} and \cite[Theorem 5.10]{KM} }\\
		& = \omega_{\mathcal{Z}_P}(\partial\mathcal{Z}_P).\qedhere
		\end{align*}
	\end{proof}

The following is an immediate consequence of Proposition \ref{thm:kv}, 
\Cref{acyclic} and the Grothendieck spectral sequence \cite[Part I, Proposition 4.1]{J} applied to $\tilde{\pi}=\pi\circ f$.

\begin{corollary} \label{coro9.5} The morphism $\pi:\mathcal{Z}_P\to \bar{\Gamma}$ from the diagram satisfies \[R^p\pi_*(\omega_{\mathcal{Z}_P}(\partial\mathcal{Z}_P))=0\hspace{5mm} \text{for all}\ p>0.\]
	\end{corollary}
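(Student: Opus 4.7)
The plan is to apply the Grothendieck (Leray) composition-of-functors spectral sequence to the factorization $\tilde{\pi}=\pi\circ f$, where $f:\widetilde{\mathcal{Z}}\to\mathcal{Z}_P$ is the proper birational desingularization and $\pi:\mathcal{Z}_P\to\bar{\Gamma}$ is the first projection.

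First, I would write down the spectral sequence
\begin{equation*}
E_2^{p,q}=R^p\pi_*\,R^qf_*\bigl(\omega_{\widetilde{\mathcal{Z}}}(\partial\widetilde{\mathcal{Z}})\bigr)\;\Longrightarrow\;R^{p+q}\tilde{\pi}_*\bigl(\omega_{\widetilde{\mathcal{Z}}}(\partial\widetilde{\mathcal{Z}})\bigr),
\end{equation*}
as in \cite[Part I, Proposition 4.1]{J}. Both $f$ and $\pi$ are proper ($f$ is a proper birational map by Proposition \ref{prop8.1}, and $\pi$ is proper because $\mu$ is proper as a base-change of the closed immersion $\tilde{\Delta}\hookrightarrow(X^2_w(P))_\bb{P}$ along $m$, so the higher direct images make sense as coherent sheaves.

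Next I would use \Cref{acyclic}(a) to kill all rows $q>0$ of the $E_2$-page: since $R^qf_*(\omega_{\widetilde{\mathcal{Z}}}(\partial\widetilde{\mathcal{Z}}))=0$ for every $q>0$, the spectral sequence degenerates at $E_2$ and collapses onto the row $q=0$, giving the identifications
\begin{equation*}
R^p\tilde{\pi}_*\bigl(\omega_{\widetilde{\mathcal{Z}}}(\partial\widetilde{\mathcal{Z}})\bigr)\;\simeq\;R^p\pi_*\bigl(f_*\omega_{\widetilde{\mathcal{Z}}}(\partial\widetilde{\mathcal{Z}})\bigr)\qquad\text{for all }p\geq 0.
\end{equation*}
Then I would invoke \Cref{acyclic}(b) to replace the push-forward inside: $f_*(\omega_{\widetilde{\mathcal{Z}}}(\partial\widetilde{\mathcal{Z}}))=\omega_{\mathcal{Z}_P}(\partial\mathcal{Z}_P)$. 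Substituting this gives
\begin{equation*}
R^p\pi_*\bigl(\omega_{\mathcal{Z}_P}(\partial\mathcal{Z}_P)\bigr)\;\simeq\;R^p\tilde{\pi}_*\bigl(\omega_{\widetilde{\mathcal{Z}}}(\partial\widetilde{\mathcal{Z}})\bigr)\qquad\text{for all }p\geq 0.
\end{equation*}

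Finally, I would apply \Cref{thm:kv}, which states exactly that $R^p\tilde{\pi}_*(\omega_{\widetilde{\mathcal{Z}}}(\partial\widetilde{\mathcal{Z}}))=0$ for all $p>0$. Combined with the isomorphism above, this yields the desired vanishing $R^p\pi_*(\omega_{\mathcal{Z}_P}(\partial\mathcal{Z}_P))=0$ for all $p>0$, completing the proof. There is essentially no obstacle here beyond verifying the hypotheses for the Grothendieck spectral sequence (properness of $f$ and $\pi$, and coherence of the sheaf $\omega_{\widetilde{\mathcal{Z}}}(\partial\widetilde{\mathcal{Z}})$, which follows since $\widetilde{\mathcal{Z}}$ is smooth and $\partial\widetilde{\mathcal{Z}}$ is a simple normal crossings divisor as noted in the proof of \Cref{thm:kv}); the real content has already been packaged into \Cref{thm:kv} and \Cref{acyclic}.
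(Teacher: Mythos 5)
Your proof is correct and is exactly the argument the paper intends: the corollary is stated there as an immediate consequence of Proposition \ref{thm:kv}, Theorem \ref{acyclic}, and the Grothendieck spectral sequence for $\tilde{\pi}=\pi\circ f$, which is precisely the collapse-and-substitute argument you spell out.
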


\section{Proof of Part (B) of \Cref{main}}

From \Cref{prop:finunion}, a similar argument to the proof of part (A) of \Cref{main} gives the vanishing \[\stor_1^{\ss{(\bar{Y}_P)_\bb{P}}}(\gamma_*\tilde{\Delta}_*\ss{(X_w^P)_\bb{P}}, \ss{\partial((X^{u,v}_P)_\bj)}) = 0\hspace{5mm} \text{for general}\ \gamma\in \bar{\Gamma}.\] 
This together with the definition that $\xi^u_P := \idshf{X^u_P}{\partial X^u_P}$ implies that part (B) of \Cref{main} is equivalent to the following
\begin{theorem} Assume $c_{u,v}^w(\bj)\neq 0$. For general $\gamma\in\bar{\Gamma}$, 
	\[H^p(\bar{N}_\gamma, \mathcal{O}_{\bar{N}_\gamma}(-\bar{M}_\gamma)) = 0\hspace{5mm} \text{for all}\ p\neq |\bj|+l(w)-l(u)-l(v),\]
	where $\bar{M}_\gamma:=M_{\gamma^{-1}}$ is the subscheme $(\partial((X^{u,v}_P)_\bj))\cap \gamma\tilde{\Delta}((X_w^P)_\bb{P})$ and $\mathcal{O}_{\bar{N}_\gamma}(-\bar{M}_\gamma)$ is the ideal sheaf of $\bar{M}_\gamma$ in $\bar{N}_\gamma:=(X^{u,v}_P)_\bj\cap\gamma\tilde{\Delta}((X_w^P)_\bb{P})$. 
	\end{theorem}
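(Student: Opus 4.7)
The plan is to reduce the claimed cohomological vanishing to Serre duality, the local-to-global Ext spectral sequence, and \Cref{coro9.5}. First, I would identify $\bar{N}_\gamma$ with the fiber $N_{\gamma^{-1}}=\pi^{-1}(\gamma^{-1})\subset\mathcal{Z}_P$: since the action of $\bar{\Gamma}$ preserves $(X^2_w(P))_\bb{P}$, the intersection $\bar{N}_\gamma=(X^{u,v}_P)_\bj\cap\gamma\tilde{\Delta}((X^P_w)_\bb{P})$ lies in $(X^{u,v}_w(P))_\bj$, giving $\bar{N}_\gamma=N_{\gamma^{-1}}$, and likewise $\bar{M}_\gamma=M_{\gamma^{-1}}$ by definition. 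Because $\gamma\mapsto\gamma^{-1}$ preserves genericity on $\bar{\Gamma}$, it is enough to show
\[H^p(N_\gamma,\idshf{N_\gamma}{M_\gamma})=0\qquad\text{for all } p\neq d:=|\bj|+l(w)-l(u)-l(v)\]
for general $\gamma$. By \Cref{coro8.5}, $N_\gamma$ is CM of pure dimension $d$ for general $\gamma$, so $p>d$ is immediate by Grothendieck vanishing. The remaining task is the vanishing for $0\leq p<d$.

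For that range I would invoke Serre duality on the projective CM scheme $N_\gamma$:
\[H^p(N_\gamma,\idshf{N_\gamma}{M_\gamma})^*\cong \textup{Ext}^{d-p}_{\ss{N_\gamma}}(\idshf{N_\gamma}{M_\gamma},\omega_{N_\gamma}),\]
reducing the problem to showing $\textup{Ext}^i_{\ss{N_\gamma}}(\idshf{N_\gamma}{M_\gamma},\omega_{N_\gamma})=0$ for every $i>0$. Feed this into the local-to-global Ext spectral sequence
\[E_2^{p,q}=H^p\bigl(N_\gamma,\sext^q_{\ss{N_\gamma}}(\idshf{N_\gamma}{M_\gamma},\omega_{N_\gamma})\bigr)\Rightarrow \textup{Ext}^{p+q}_{\ss{N_\gamma}}(\idshf{N_\gamma}{M_\gamma},\omega_{N_\gamma}).\]
By the local Ext vanishing \eqref{eqn21} from \Cref{coro8.5}, only the row $q=0$ survives, so $\textup{Ext}^i\cong H^i(N_\gamma,\omega_{N_\gamma}(M_\gamma))$, and the problem reduces to proving $H^i(N_\gamma,\omega_{N_\gamma}(M_\gamma))=0$ for every $i>0$ and general $\gamma$.

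The final step is to extract this fiberwise vanishing from \Cref{coro9.5}. Over the open locus in $\bar{\Gamma}$ on which the fibers of $\pi$ are CM of constant dimension $d$ (and hence on which $\pi$ is flat), the relative dualizing sheaf of $\pi$ restricts fiberwise to $\omega_{N_\gamma}$, while $\partial\mathcal{Z}_P$ restricts to $M_\gamma$ by \Cref{coro8.5}; consequently $\omega_{\mathcal{Z}_P}(\partial\mathcal{Z}_P)$ restricts to $\omega_{N_\gamma}(M_\gamma)$ on general fibers. Combined with $R^p\pi_*(\omega_{\mathcal{Z}_P}(\partial\mathcal{Z}_P))=0$ from \Cref{coro9.5}, cohomology-and-base-change on this open locus yields $H^i(N_\gamma,\omega_{N_\gamma}(M_\gamma))=0$ for $i>0$ and general $\gamma$, completing the proof.

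The main obstacle is this last base-change step: one must verify that the restriction of $\omega_{\mathcal{Z}_P}(\partial\mathcal{Z}_P)$ to a general fiber really is $\omega_{N_\gamma}(M_\gamma)$, which requires care because $\omega_{\mathcal{Z}_P}$ is a dualizing sheaf rather than a line bundle (the total space $\mathcal{Z}_P$ need not be Gorenstein), and one must apply cohomology-and-base-change in a setting where flatness holds only on an open subset of $\bar{\Gamma}$. The $G/B$ analogue in \cite{Ku} shows that these technicalities can be handled using exactly the properties of $\mathcal{Z}_P$ assembled in \Cref{prop8.1,coro8.5,coro9.5}, and the same strategy should go through in the present parabolic setting.
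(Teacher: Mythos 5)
Your proposal is correct and follows essentially the same route as the paper: Serre duality together with the local-to-global Ext spectral sequence and the Ext-vanishing \eqref{eqn21} of \Cref{coro8.5} reduce the statement to $H^{p}(\bar{N}_\gamma,\omega_{\bar{N}_\gamma}(\bar{M}_\gamma))=0$ for $p>0$, which is then extracted from \Cref{coro9.5} by restricting $\omega_{\mathcal{Z}_P}(\partial\mathcal{Z}_P)$ to general fibers over the flat locus $\bar{\Gamma}_o$ and invoking semicontinuity/base change. The one step you defer---that this restriction really is $\omega_{\bar{N}_\gamma}(\bar{M}_\gamma)$---is precisely the paper's identity \eqref{canshf}, proved there using the CM property of $\mathcal{Z}_P$ and $\partial\mathcal{Z}_P$ and the fact that $\shom_{\ss{\mathcal{Z}_P}}(\idshf{\mathcal{Z}_P}{\partial\mathcal{Z}_P},\omega_{\mathcal{Z}_P})$ commutes with cutting by local coordinate functions on $\bar{\Gamma}_o$.
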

\begin{proof} Since $\mathcal{Z}_P$ and $\partial\mathcal{Z}_P$ are Cohen-Macaulay and $\partial\mathcal{Z}_P$ is of codimension 1 in $\mathcal{Z}_P$ (cf. Propositions \ref{prop8.1} and \ref{prop:bdyZ}), we have \[\sext^i_{\ss{\mathcal{Z}_P}}(\idshf{\mathcal{Z}_P}{\partial\mathcal{Z}_P},\omega_{\mathcal{Z}_P}) = 0\hspace{5mm} \text{for all}\ i\geq 1.\] Similarly, by the identity \eqref{eqn21},
we also have for general $\gamma\in\bar{\Gamma}$,\[\sext^i_{\ss{\bar{N}_\gamma}}(\idshf{\bar{N}_\gamma}{\bar{M}_\gamma},\omega_{\bar{N}_\gamma}) = 0\hspace{5mm}\text{for all}\ i\geq 1.\] 
	By the Serre duality \cite[Ch. III, Theorem 7.6]{H} applied to $\bar{N}_\gamma$ and the local-to-global Ext spectral sequence \cite[Ch. II, Th\'{e}or\`{e}me 7.3.3]{Go}, the theorem is equivalent to the vanishing 
	\begin{equation}\label{van:coh} H^p(\bar{N}_\gamma,\shom_{\ss{\bar{N}_\gamma}}(\idshf{\bar{N}_\gamma}{\bar{M}_\gamma}, \omega_{\bar{N}_\gamma})) = 0\hspace{5mm}\text{for all}\ p>0,
		\end{equation}
	since $\bar{N}_\gamma$ is Cohen-Macaulay (for general $\gamma\in\bar{\Gamma}$) and $\dim(\bar{N}_\gamma)=|\bj|+l(w)-l(u)-l(v)$ (cf. Corollary 
\ref{coro8.5}).	
	
	 Let $\omega_{\bar{N}_\gamma}(\bar{M}_\gamma):=\shom_{\ss{\bar{N}_\gamma}}(\idshf{\bar{N}_\gamma}{\bar{M}_\gamma},\omega_{\bar{N}_\gamma})$. Then for general $\gamma\in\bar{\Gamma}$, \begin{equation}\label{canshf} \omega_{\mathcal{Z}_P}(\partial \mathcal{Z}_P)\vert_{\pi^{-1}(\gamma^{-1})}\simeq \omega_{\pi^{-1}(\gamma^{-1})}(\partial \mathcal{Z}_P\cap \pi^{-1}(\gamma^{-1})) = \omega_{\bar{N}_\gamma}(\bar{M}_\gamma).
	 	\end{equation}
 	To prove this, observe that by \cite[Ch. I, \S 6.3, Theorem 1.25]{S} and \cite[Ch. III, Exercise 10.9]{H}, there exists an open nonempty subset $\bar{\Gamma}_o\subset \bar{\Gamma}$ such that $\pi:\pi^{-1}(\bar{\Gamma}_o)\to \bar{\Gamma}_o$ is a flat morphism. Since $\bar{\Gamma}_o$ is smooth and $\mathcal{Z}_P$ and $\partial\mathcal{Z}_P$ are Cohen-Macaulay, and since the assertion is local in $\bar{\Gamma}$, it is enough to observe (see \cite[Corollary 11.35]{I}) that for a nonzero function $\theta$ on $\bar{\Gamma}_o$, there is an isomorphism of $\ss{\mathcal{Z}_P^\theta}$-modules
 	\[\mathcal{S}/\theta\cdot \mathcal{S}\simeq \shom_{\ss{\mathcal{Z}_P^\theta}}(\idshf{\mathcal{Z}_P}{\partial\mathcal{Z}_P}/\theta\cdot \idshf{\mathcal{Z}_P}{\partial\mathcal{Z}_P},\omega_{\mathcal{Z}_P^\theta}),\]
 	where $\mathcal{Z}_P^\theta$ is the zero scheme of $\theta$ in $\mathcal{Z}_P$ and $\mathcal{S}:=\shom_{\ss{\mathcal{Z}_P}}(\idshf{\mathcal{Z}_P}{\partial\mathcal{Z}_P},\omega_{\mathcal{Z}_P})$. Taking $\theta$ in a local coordinate system, we can continue and get \eqref{canshf}.
 	
 	Now $R^p\pi_*(\omega_{\mathcal{Z}_P}(\partial\mathcal{Z}_P)) =0$ for $p>0$ (cf. Corollary \ref{coro9.5}) 
implies that for general $\gamma\in\bar{\Gamma}$, \eqref{van:coh} holds. This follows from the fact that since $\mathcal{Z}_P$ and $\partial\mathcal{Z}_P$ are Cohen-Macaulay, $\bar{\Gamma}_o$ is smooth, and $\pi:\pi^{-1}(\bar{\Gamma}_o)\to\bar{\Gamma}_o$ is flat, we have $\omega_{\mathcal{Z}_P}(\partial \mathcal{Z}_P)$ is flat over $\bar{\Gamma}_o$ (see the proof of \cite[Theorem 9.1]{Ku}). Therefore \eqref{van:coh} follows from the semicontinuity theorem (\cite[Ch. III, Theorem 12.8 and Corollary 12.9]{H}). Hence the theorem is proven and therefore so is part (B) of \Cref{main}. 	Thus, Corollary \ref{coeff:calts}	is proved. 
\end{proof}

\newpage

\section{Appendix}

The aim of this appendix is to determine the dualizing sheaf $\omega_{X^w_P}$ of the Cohen-Macaulay scheme $X^w_P \subset \bar{X}_P$  for any $w\in W^P$ (in the thick Kac-Moody flag variety). Even though this result is not used in the paper, we believe that it is interesting on its own for potentially its future use.  When $P=B$, this result was proved by Kashiwara (cf. \cite[Theorem 10.4]{Ku}). Moreover, in the finite case (i.e., when $G$ is a semisimple group), this result for any $P$ is obtained in \cite[Theorem 3.3]{KRW}.

From \cite[Part II, \S 4.2]{J}, the dualizing sheaf of $\bar{X}_P$ in the finite case is given by \[\omega_{\bar{X}_P}=\mathcal{L}^P(-2\rho+2\rho^Y),\]  where $\rho^Y$ denotes the half sum of positive roots coming from the Levi component of $P$. 

Given any Cohen-Macaulay subscheme $Y$ of a smooth variety $X$, its dualizing sheaf $\omega_Y$ is given by
\[\omega_Y\simeq\sext_{\ss{X}}^{\text{codim}\ Y}(\ss{Y},\ss{X})\otimes\omega_X.\] 
In particular, by \Cref{prop:norm}, following the analogy with the schemes of finite type, we define the dualizing sheaf of $X^w_P$ (for any $w\in W^P$) by
\[\omega_{X^w_P} := \sext_{\ss{\bar{X}_P}}^{l(w)}(\ss{X^w_P},\ss{\bar{X}_P})\otimes\mathcal{L}^P(-2\rho+2\rho^Y).\]

\noindent Recall the definitions of $\rho_Y = \sum_{i\in Y} \varpi_i$ and $\hat{\rho}_Y = \rho-\rho_Y$ from Section 2. For any $w\in W^P$, let \begin{align*}\hat{\xi}^w_P:&=\bb{C}_{-\rho+w\rho_Y}\otimes\omega_{X^w_P}\otimes\mathcal{L}^P(\rho+\rho_Y-2\rho^Y)\\ &= \bb{C}_{-\rho+w\rho_Y}\otimes \sext_{\ss{\bar{X}_P}}^{l(w)}(\ss{X^w_P},\ss{\bar{X}_P})\otimes \mathcal{L}^P(-\hat{\rho}_Y).
\end{align*}

\begin{lemma}\label{lem:rest} Restricted to the open cell $C^w_P\subset X^w_P$, we have a $B^-$-equivariant isomorphism 
	\[(\hat{\xi}^w_P)\vert_{C^w_P}\simeq (\ss{X^w_P})\vert_{C^w_P}.\]
	\end{lemma}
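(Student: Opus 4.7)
The plan is to show that both sides are $B^-$-equivariant line bundles on the $B^-$-homogeneous space $C^w_P = B^-\cdot wP \simeq B^-/(B^-\cap wPw^{-1})$, and then reduce the isomorphism to matching a single $H$-weight at the $T$-fixed point $wP$. Because the stabilizer $B^-\cap wPw^{-1}$ is a connected solvable group (a semi-direct product of $H$ with a unipotent subgroup), its one-dimensional characters are determined by their restriction to $H$; hence any $B^-$-equivariant line bundle on $C^w_P$ is determined up to $B^-$-equivariant isomorphism by the $H$-weight of its fiber at $wP$.

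First I would identify the Ext factor. The cell $C^w_P$ is a smooth open subvariety of $X^w_P$ and $\bar{X}_P$ is smooth, so the inclusion $C^w_P\hookrightarrow\bar{X}_P$ is locally cut out by a regular sequence of length $l(w)$. The Koszul resolution then gives a $B^-$-equivariant isomorphism
\[
\sext^{l(w)}_{\ss{\bar{X}_P}}(\ss{X^w_P},\ss{\bar{X}_P})\big|_{C^w_P} \;\simeq\; \det N_{C^w_P/\bar{X}_P}.
\]
Next I would compute the weight of this determinant at $wP$. With $T_{wP}\bar{X}_P = \mathfrak{g}/\operatorname{Ad}(w)\mathfrak{p}$ and the tangent space to $C^w_P$ equal to the image of $\mathfrak{u}^-$, a root space $\mathfrak{g}_\alpha$ contributes to the normal fiber exactly when $\alpha > 0$, $w^{-1}\alpha < 0$, and $w^{-1}\alpha \notin -R^+_L$ (the negatives of the positive Levi roots). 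Because $w\in W^P$ sends positive Levi roots to positive roots, the last condition follows automatically from the first two (otherwise $-\alpha = w(-w^{-1}\alpha)$ would be positive, contradicting $\alpha > 0$). Thus the normal weights at $wP$ are exactly the inversion set $\{\alpha>0 : w^{-1}\alpha<0\}$, of cardinality $l(w)$ and sum $\rho - w\rho$.

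Finally, combine the three factors defining $\hat{\xi}^w_P$ at $wP$: $\mathcal{L}^P(-\hat{\rho}_Y)$ contributes fiber weight $w\hat{\rho}_Y$, the trivial twist $\mathbb{C}_{-\rho+w\rho_Y}$ contributes weight $-\rho+w\rho_Y$, and the Ext factor contributes $\rho - w\rho$. Summing,
\[
(-\rho+w\rho_Y) + (\rho-w\rho) + w\hat{\rho}_Y \;=\; -w\rho + w\rho_Y + w(\rho-\rho_Y) \;=\; 0,
\]
so $\hat{\xi}^w_P|_{C^w_P}$ has trivial $H$-weight at $wP$ and is therefore $B^-$-equivariantly isomorphic to $\ss{C^w_P}$. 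The main obstacle is the normal-bundle calculation: verifying that $\mathfrak{g}/(\mathfrak{u}^-+\operatorname{Ad}(w)\mathfrak{p})$ has root content of rank exactly $l(w)$ with weights the inversion set of $w^{-1}$, for which the minimality condition $w\in W^P$ is essential to rule out additional contributions from $-R^+_L$; everything else is formal weight-matching.
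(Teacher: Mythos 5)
Your proposal is correct and follows essentially the same route as the paper: use $B^-$-equivariance to reduce to the $H$-weight of the fiber at the fixed point $wP$, identify the restricted $\sext^{l(w)}$ factor with the determinant of the normal space whose weight is $\rho-w\rho$ (with $w\in W^P$ ruling out Levi-root contributions), and check that the weights $-\rho+w\rho_Y$, $\rho-w\rho$, and $w\hat{\rho}_Y$ sum to zero. Your extra remarks (characters of the connected solvable stabilizer are determined on $H$; Koszul identification of the Ext sheaf) just make explicit steps the paper leaves implicit.
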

\begin{proof}Because of the $B^-$-equivariance of $\hat{\xi}^w_P$, it suffices to show that $i^*_w\hx$ is trivial as an $H$-module, where $i_w:\{pt\}\to \bar{X}_P$ is the map sending the point to the fixed point $w$. First, note that for any character $\lambda$ of $P$, $i^*_w\mathcal{L}^P(\lambda) = \bb{C}_{-w\lambda}$. Further, \[i^*_w\left(\sext_{\ss{\bar{X}_P}}^{l(w)}\left(\ss{X^w_P},\ss{\bar{X}_P}\right)\right) \simeq \det\left(\frac{T_w(\bar{X}_P)}{T_w(X^w_P)}\right) \simeq \bb{C}_{\rho-w\rho}.\]
	The last equality follows from the computation of the tangent spaces: 
	\begin{align*}T_w(X^w_P) &= T_w(B^-wP/P) = T_w\left(w(w^{-1}B^-w\cap U^-_P)P/P\right) = \bigoplus_{\beta\in \Delta^-\cap w(\Delta^-\setminus \Delta^-_Y)}\mf{g}_\beta\\
	T_w(\bar{X}_P) &= \bigoplus_{\beta\in w(\Delta^-\setminus \Delta_Y^-)}\mf{g}_\beta,
	\end{align*}
		where $\Delta^-$ is the set of negative roots, $\Delta^-_Y$ the set of negative roots of the Levi component of $P$, and $U^-_P$ is the unipotent radical of the opposite parabolic of $P$. Then by \cite[Corollary 1.3.22]{K},
	
	 \[\det\left(\frac{T_w(\bar{X}_P)}{T_w(X^w_P)}\right) = \det\left(\bigoplus_{\beta\in \Delta^+\cap w(\Delta^-\setminus \Delta^-_Y)}\mf{g}_\beta\right)=\det\left(\bigoplus_{\beta\in \Delta^+\cap w\Delta^-}\mf{g}_\beta\right)= \bb{C}_{\rho-w\rho},\,\,\,\text{since $w\in W^P$}.\] Therefore the conclusion of the lemma follows since the weight of $i^*_w\hx$ is given by \[-\rho+w\rho_Y + (\rho-w\rho) - w(-\hat{\rho}_Y) = w\rho_Y-w\rho+w\rho-w\rho_Y = 0.\qedhere\]
	\end{proof}

Let $\ds V^w_P :=C^w_P\cup \bigcup_{v\leftarrow w} C^v_P$, where the notation $v\leftarrow w$ indicates that $l(w) = l(v) -1$ and $v> w \in W^P$. Then, $V^w_P$ is a smooth open subset of $X^w_P$ and $\hx\vert_{V^w_P}$ is an invertible $B^-$-equivariant $\ss{V^w_P}$-module. By \Cref{lem:rest}, \[\hx\vert_{V^w_P}\simeq \ss{X^w_P}\left(-\sum_{v\leftarrow w} m_{w,v}^PX^v_P\right)\vert_{V^w_P}, \hspace{5mm}\text{for some}\ m_{w,v}^P\in\bb{Z}. \]

\begin{lemma}\label{lem:m} The coefficients $m_{w,v}^P$ are given by the formula \[m_{w,v}^P =1- \id{w\rho_Y, \beta^\vee}, \] where $\beta$ is the positive root so that $s_\beta w = v$. 
	\end{lemma}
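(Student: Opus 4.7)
The plan is to compare the $T$-weights of both sides of the displayed isomorphism
\[
\hx\vert_{V^w_P}\;\simeq\;\ss{X^w_P}\!\left(-\sum_{u\leftarrow w} m_{w,u}^P X^u_P\right)\bigg\vert_{V^w_P}
\]
at the $T$-fixed point $vP\in V^w_P$. Since the identification in Lemma \ref{lem:rest} is $B^-$-equivariant and both sides above are $B^-$-equivariant invertible sheaves on the smooth scheme $V^w_P$, their fibers at $vP$ must agree as $T$-modules, and this will read off $m_{w,v}^P$.

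The key geometric input is the Bruhat curve $C:=\overline{U_{-\beta}\cdot wP}\cong \bb{P}^1$, which is $T$-stable with fixed points $wP$ and $vP=s_\beta wP$. Its open part lies in $C^w_P$ (the $B^-$-orbit of $wP$), so $C\subset V^w_P$, and its tangent line at $vP$ has $T$-weight $\beta$. Using the formula for $T_{vP}X^v_P$ from the proof of Lemma \ref{lem:rest}, this yields the decomposition
\[
T_{vP} V^w_P\;=\;T_{vP}X^v_P\,\oplus\,\mf{g}_\beta.
\]
One minor technical point is to confirm that $\mf{g}_\beta$ actually sits in $T_{vP}\bar{X}_P$, i.e., $v^{-1}\beta\notin \Delta_Y^-$; otherwise one would find $v\in wW_P$, contradicting the minimality of $v,w\in W^P$.

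On the right-hand side, for $u\ne v$ with $u\leftarrow w$ the divisor $X^u_P$ misses $vP$, so $\ss{X^w_P}(-\sum_u m_{w,u}^P X^u_P)\vert_{vP}$ is just the $m_{w,v}^P$-th tensor power of the conormal line of $X^v_P$ in $V^w_P$, hence has $T$-weight $-m_{w,v}^P\beta$. For the left-hand side, I would compute the weight of $\hx\vert_{vP}$ factor by factor: $\bb{C}_{-\rho+w\rho_Y}$ contributes $-\rho+w\rho_Y$; $\mathcal{L}^P(-\hat{\rho}_Y)\vert_{vP}$ contributes $v\hat{\rho}_Y$; and $\sext^{l(w)}_{\ss{\bar{X}_P}}(\ss{X^w_P},\ss{\bar{X}_P})\vert_{vP}\cong \det(T_{vP}\bar{X}_P/T_{vP}X^w_P)$ contributes $\rho-v\rho-\beta$. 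This last computation uses the decomposition of $T_{vP}X^w_P = T_{vP}V^w_P$ above, together with $\sum_{\mu\in \Delta^+\cap v\Delta^-}\mu=\rho-v\rho$ and $\Delta^+\cap v\Delta_Y^-=\emptyset$ (from $v\in W^P$). Summing and simplifying via $v\rho_Y = w\rho_Y-\id{w\rho_Y,\beta^\vee}\beta$ gives a total weight of $(\id{w\rho_Y,\beta^\vee}-1)\beta$.

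Equating the two sides yields $-m_{w,v}^P\beta = (\id{w\rho_Y,\beta^\vee}-1)\beta$, whence $m_{w,v}^P = 1-\id{w\rho_Y,\beta^\vee}$. The main obstacle is the tangent-space bookkeeping in the Kac-Moody (non-finite-type) setting, in particular verifying the decomposition $T_{vP}V^w_P=T_{vP}X^v_P\oplus\mf{g}_\beta$ and that $\beta$ is a bona fide tangent weight of $\bar{X}_P$ at $vP$; once these are in hand, the weight arithmetic is routine.
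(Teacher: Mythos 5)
Your proposal is correct and takes essentially the same route as the paper: restrict the defining isomorphism $\hx\vert_{V^w_P}\simeq \ss{X^w_P}(-\sum_{u\leftarrow w} m^P_{w,u}X^u_P)\vert_{V^w_P}$ to the $T$-fixed point $vP$ and equate $T$-weights, yielding $-m^P_{w,v}\beta=(\id{w\rho_Y,\beta^\vee}-1)\beta$. The only (harmless) difference is that where the paper invokes \cite[Lemma 10.3]{Ku} for the weight-$\beta$ identification of $T_v(X^w_P)/T_v(X^v_P)$, you re-derive it via the $T$-stable curve $\overline{U_{-\beta}\cdot wP}$ together with the check $v^{-1}\beta\notin\Delta_Y^-$.
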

\begin{proof}Let $v\in W^P$ be such that $v\leftarrow w$. We first compute \begin{align*}i^*_v\left(\sext_{\ss{\bar{X}_P}}^{l(w)}(\ss{X^w_P},\ss{\bar{X}_P})\right) &\simeq \det\left(\frac{T_v(\bar{X}_P)}{T_v(X^w_P)}\right) \\ & \simeq \det\left(\frac{T_v(\bar{X}_P)}{T_v(X^v_P)}\right)\otimes \det\left(\frac{T_v(X^w_P)}{T_v(X^v_P)}\right)^*\\ &\simeq \bb{C}_{\rho-v\rho-\beta},\,\,\,\text{by \cite[Lemma 10.3]{Ku}}.
		\end{align*}
	Hence, \begin{align}\label{onecalc}i^*_v\hx & \simeq \bb{C}_{-\rho+w\rho_Y}\otimes \bb{C}_{\rho-v\rho-\beta}\otimes \bb{C}_{v\hat{\rho}_Y}\nonumber \\ & = \bb{C}_{w\rho_Y-\beta-v\rho_Y}\nonumber \\ & =\bb{C}_{(\id{w\rho_Y,\beta^\vee}-1)\beta}.
		\end{align}
	On the other hand \begin{align}\label{another}i^*_v\left(\ss{X^w_P}\left(-\sum_{u\leftarrow w} m_{w,u}^PX^u_P\right)\right) & = \det\left(\frac{T_v(X^w_P)}{T_v(X^v_P)}\right)^{\otimes -m_{w,v}^P}\nonumber \\ & = \bb{C}_\beta^{\otimes -m_{w,v}^P}\nonumber \\ & = \bb{C}_{-m_{w,v}^P\beta}.
		\end{align}
	Equating the equations  \eqref{onecalc} and \eqref{another}, we have the desired result. 
	\end{proof}

\begin{theorem}For any $w\in W^P$, we have a $B^-$-equivariant isomorphism \[\hx\simeq \ss{X^w_P}\left(-\sum_{v\leftarrow w \in W^P} m_{w,v}^PX^v_P\right),\] where $m_{w,v}^P$ are as in \Cref{lem:m}. Therefore, the dualizing sheaf of $X^w_P$ is $T$-equivariantly isomorphic to
	\[\bb{C}_{\rho-w\rho_Y}\otimes \ss{X^w_P}\left(-\sum_{v\leftarrow w} m_{w,v}^PX^v_P\right)\otimes \mathcal{L}^P(2\rho^Y-\rho-\rho_Y).\]
	\end{theorem}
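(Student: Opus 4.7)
The plan is to mirror the strategy used in the $P=B$ case \cite[Theorem 10.4]{Ku}. Both sides of the asserted isomorphism are rank-one reflexive sheaves on the irreducible normal scheme $X^w_P$, and Lemmas \ref{lem:rest} and \ref{lem:m} already supply an isomorphism on the smooth open subset $V^w_P$. The main task is then to extend this isomorphism across the complement $X^w_P \setminus V^w_P$, which has codimension $\geq 2$ in $X^w_P$.

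First, I would check that $\hx$ is a rank-one reflexive $\ss{X^w_P}$-module. Since $X^w_P$ is Cohen-Macaulay (\Cref{prop:norm}), the sheaf $\sext_{\ss{\bar{X}_P}}^{l(w)}(\ss{X^w_P},\ss{\bar{X}_P})$ is a rank-one $S_2$ sheaf on $X^w_P$, and tensoring with the $T$-equivariant line bundles $\mathcal{L}^P(-\hat{\rho}_Y)$ and the character $\bb{C}_{-\rho+w\rho_Y}$ preserves this property. The right hand side $\ss{X^w_P}\left(-\sum_{v\leftarrow w}m_{w,v}^P X^v_P\right)$ is the sheaf associated to a Weil divisor on the normal scheme $X^w_P$, hence also rank-one reflexive.

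Next, I would identify the isomorphism on $V^w_P$. The subset $V^w_P$ is smooth, so $\hx|_{V^w_P}$ is a $B^-$-equivariant line bundle. \Cref{lem:rest} supplies a $B^-$-equivariant trivialization on the dense open $C^w_P$, so $\hx|_{V^w_P}$ is the $\ss{V^w_P}$-line bundle associated to a divisor supported on $V^w_P\setminus C^w_P = \bigsqcup_{v\leftarrow w}(X^v_P\cap V^w_P)$. \Cref{lem:m} computes the multiplicity of each component as $m_{w,v}^P = 1-\langle w\rho_Y,\beta^\vee\rangle$, yielding the $B^-$-equivariant isomorphism
$$\hx\vert_{V^w_P}\simeq \ss{X^w_P}\left(-\sum_{v\leftarrow w} m_{w,v}^PX^v_P\right)\bigg|_{V^w_P}.$$

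Finally, I would invoke a Hartogs-type extension: a $B^-$-equivariant isomorphism between two rank-one reflexive sheaves on the normal scheme $X^w_P$, defined on the complement of a closed subscheme of codimension $\geq 2$, extends uniquely to a $B^-$-equivariant isomorphism on all of $X^w_P$. Since $X^w_P\setminus V^w_P = \bigcup_{\substack{u\geq w,\,u\in W^P\\ l(u)\geq l(w)+2}} C^u_P$ has codimension $\geq 2$, this completes the proof of the first asserted isomorphism. The formula for $\omega_{X^w_P}$ then follows by rearranging the defining identity $\hx = \bb{C}_{-\rho+w\rho_Y}\otimes\omega_{X^w_P}\otimes\mathcal{L}^P(\rho+\rho_Y-2\rho^Y)$.

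The main obstacle is ensuring that the reflexive-extension step is valid in the present nonquasi-compact Kac-Moody setting, i.e., that $\hx$ is genuinely $S_2$ on $X^w_P$ (which rests on the CM property from \Cref{prop:norm}) and that the standard Hartogs argument applies locally on the finite-type open pieces $V^S\cap X^w_P$. Once this is in place, the proof is essentially divisor bookkeeping supplied by the two preceding lemmas.
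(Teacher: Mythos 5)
Your proposal is correct and takes essentially the same route as the paper: identify $\hx$ on the smooth open set $V^w_P$ with the divisorial sheaf via Lemmas \ref{lem:rest} and \ref{lem:m}, then extend across the codimension-$\geq 2$ complement $X^w_P\setminus V^w_P$ using normality of $X^w_P$ and the CM ($S_2$) property of $\hx$, which is exactly the paper's $j_*j^{-1}$ argument. The only cosmetic difference is that you phrase the extension step in terms of rank-one reflexive sheaves, whereas the paper runs a short diagram chase with the ideal-sheaf exact sequence; both rest on the same Hartogs-type extension principle.
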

\begin{proof}Let $j:V^w_P\hookrightarrow X^w_P$ be the inclusion map and set $D:=\ds \sum_{v\leftarrow w} m_{w,v}^P X^v_P$. We have the following commutative diagram with exact rows \begin{center}
	\begin{tikzcd}0\arrow[r] & \idshf{X^w_P}{D}\arrow[r]\arrow[d] & \ss{X^w_P}\arrow[r]\arrow[d,"\simeq"] &\ss{D}\arrow[r]\arrow[d,hook] & 0\\
		0\arrow[r] & j_*j^{-1}(\idshf{X^w_P}{D})\arrow[r] & j_*j^{-1}(\ss{X^w_P})\arrow[r] & j_*j^{-1}\ss{D}. & 
		\end{tikzcd}\end{center}
	Here the middle vertical arrow is an isomorphism since $X^w_P$ is normal (cf. Lemma \ref{prop:norm})	
	and $X^w_P\setminus V^w_P$ is of codimension at least 2 in $X^w_P$. The right vertical map is injective since $\overline{\text{supp}(D)\cap V^w_P} = \text{supp}(D)$. This implies that the leftmost vertical map is an isomorphism. We also have that $\hx\simeq j_*j^{-1}(\hx)$ since $\hx$ is a Cohen-Macaulay $\ss{X^w_P}$-module. Moreover, by Lemmas \ref{prop:norm}	
	and \ref{lem:m},  \[\hx\simeq j_*j^{-1}(\hx)\simeq j_*j^{-1}\idshf{X^w_P}{D}\simeq\idshf{X^w_P}{D}.\qedhere\]
	\end{proof}

For $P=B$, clearly $\rho_Y=0$ and the coefficients $m_{w,v}^P = 1- \id{w\rho_Y, \beta^\vee}=1$. This gives the following theorem of Kashiwara (\cite[Theorem 10.4]{Ku})
\begin{theorem}For any $w\in W$, we have a $B^-$-equivariant isomorphism  \[\xi^w\simeq \idshf{X^w}{\partial X^w},\] where $\xi^w:=\bb{C}_{-\rho}\otimes\mathcal{L}(-\rho)\otimes \sext_{\ss{\bar{X}}}^{l(w)}(\ss{X^w},\ss{\bar{X}}).$
	\end{theorem}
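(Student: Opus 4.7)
The plan is to deduce this theorem as a direct specialization of the preceding theorem (the general $P$ version of Kashiwara's formula) by setting $P = B$. First I would verify that when $P = B$ the subset $Y \subset \{1,\dots,r\}$ is empty, so $\rho_Y = 0$, $\hat{\rho}_Y = \rho$, $W^P = W$, and $\mathcal{L}^P = \mathcal{L}$. Substituting into the definition
\[\hx = \bb{C}_{-\rho+w\rho_Y}\otimes \sext_{\ss{\bar{X}_P}}^{l(w)}(\ss{X^w_P},\ss{\bar{X}_P})\otimes \mathcal{L}^P(-\hat{\rho}_Y)\]
collapses the $B^-$-equivariant sheaf $\hx$ to exactly the sheaf $\xi^w$ appearing in the statement; this is essentially a bookkeeping check.

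Next I would apply Lemma \ref{lem:m} in the $P = B$ case to pin down the multiplicities. Since $\rho_Y = 0$, the formula $m_{w,v}^P = 1 - \id{w\rho_Y, \beta^\vee}$ yields $m_{w,v}^B = 1$ for every cover $v \leftarrow w$, as already noted in the paragraph preceding the theorem. Consequently, the divisor appearing in the preceding theorem becomes $\sum_{v \leftarrow w} X^v$ with all coefficients equal to $1$.

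The remaining point is to identify $\sum_{v \leftarrow w} X^v$ with $\partial X^w$ as subschemes of $X^w$ so that $\ss{X^w}\bigl(-\sum_{v\leftarrow w} X^v\bigr) = \idshf{X^w}{\partial X^w}$. Set-theoretically this is immediate from the cell decomposition $X^w = \bigsqcup_{u \geq w} C^u$, which gives $\partial X^w = \bigcup_{v \leftarrow w} X^v$ set-theoretically. To upgrade this to a scheme-theoretic equality of ideal sheaves, I would proceed exactly as in the proof of the preceding theorem: restrict to the smooth open locus $V^w := C^w \cup \bigcup_{v\leftarrow w} C^v \subset X^w$ (whose complement has codimension $\geq 2$ in $X^w$), where each $X^v \cap V^w$ is a smooth irreducible divisor and these divisors are distinct. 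Hence their scheme-theoretic sum on $V^w$ is a reduced simple normal crossing divisor whose support equals $\partial X^w \cap V^w$. Since $X^w$ is normal (Lemma \ref{prop:norm}) and both $\hx$ and $\idshf{X^w}{\partial X^w}$ are torsion-free (in fact CM) $\ss{X^w}$-modules, the $B^-$-equivariant isomorphism over $V^w$ extends uniquely across the codimension-$\geq 2$ complement to all of $X^w$, giving the asserted global isomorphism.

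The only mild obstacle is this last scheme-theoretic identification, but it is already handled inside the proof of the general theorem (via the $j_*j^{-1}$ diagram and the normality of $X^w_P$), so no new work is required: the result follows by specialization and compatibility of the ideal sheaf $\idshf{X^w}{\partial X^w}$ with the reduced SNC divisor $\sum_{v \leftarrow w} X^v$ on the large smooth open $V^w$.
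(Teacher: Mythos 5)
Your proposal is correct and follows essentially the same route as the paper, which obtains this theorem simply by specializing the general $P$-version at $P=B$ (where $\rho_Y=0$, $\hat\rho_Y=\rho$, and $m^B_{w,v}=1-\id{w\rho_Y,\beta^\vee}=1$). Your extra verification that $\ss{X^w}\bigl(-\sum_{v\leftarrow w}X^v\bigr)=\idshf{X^w}{\partial X^w}$ via restriction to $V^w$ and normality is exactly the mechanism already used in the proof of the general theorem, so nothing new is needed.
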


\begin{example} (a) {\rm For the affine Kac-Moody group $G= \widehat{G_o}$, where $G_o$ is a simple simply-connected complex algebraic group, taking the standard maximal parabolic subgroup $P$, we see that if we take $w=e, v=s_0$, then $m^P_{w, v}=1.$}
	
	\vskip1ex
	(b) {\rm Let us take  $G=\widehat{SL_2}$ and $P$ the standard maximal parabolic subgroup (as above). Then, for $w=s_0, v=s_1s_0$,  $m^P_{w, v}=0.$ If we take $w=s_1s_0, v=s_0s_1s_0$, then  $m^P_{w, v}=-1.$}

\end{example}

\newpage

\bibliographystyle{alpha}
\bibliography{kpositivityIII}

\begin{filecontents}{kpositivityIII.bib}
@Article{KuNo,
	Author = {Kumar, Shrawan and Nori, Madhav V.},
	Title = {Positivity of the cup product in cohomology of flag varieties associated to {Kac}-{Moody} groups},
	FJournal = {IMRN. International Mathematics Research Notices},
	Journal = {Int. Math. Res. Not.},
	ISSN = {1073-7928},
	Volume = {1998},
	Number = {14},
	Pages = {757--763},
	Year = {1998},
	Language = {English},
	DOI = {10.1155/S1073792898000452},
	Keywords = {17B67,22E65,14C17,14M17},
	zbMATH = {1217282},
	Zbl = {1014.17023}
}

@Article{AGM,
	Author = {Anderson, Dave and Griffeth, Stephen and Miller, Ezra},
	Title = {Positivity and {Kleiman} transversality in equivariant {{\(K\)}}-theory of homogeneous spaces},
	FJournal = {Journal of the European Mathematical Society (JEMS)},
	Journal = {J. Eur. Math. Soc. (JEMS)},
	ISSN = {1435-9855},
	Volume = {13},
	Number = {1},
	Pages = {57--84},
	Year = {2011},
	Language = {English},
	DOI = {10.4171/JEMS/244},
	Keywords = {19L47,14M15},
	zbMATH = {5825385},
	Zbl = {1213.19003}
}

@Article{GK,
	Author = {Graham, William and Kumar, Shrawan},
	Title = {On positivity in {{\(T\)}}-equivariant {{\(K\)}}-theory of flag varieties},
	FJournal = {IMRN. International Mathematics Research Notices},
	Journal = {Int. Math. Res. Not.},
	ISSN = {1073-7928},
	Volume = {2008},
	Pages = {43},
	Note = {Id/No rnn093},
	Year = {2008},
	Language = {English},
	DOI = {10.1093/imrn/rnn093},
	Keywords = {14M15,14C35,13D15},
	zbMATH = {5351600},
	Zbl = {1185.14043}
}

@Book{K,
	Author = {Kumar, Shrawan},
	Title = {Kac-{Moody} groups, their flag varieties and representation theory},
	FSeries = {Progress in Mathematics},
	Series = {Prog. Math.},
	ISSN = {0743-1643},
	Volume = {204},
	ISBN = {0-8176-4227-7},
	Year = {2002},
	Publisher = {Boston, MA: Birkh{\"a}user},
	Language = {English},
	Keywords = {17B67,22E67,22-02,17-02,14M15,14M20},
	zbMATH = {1821146},
	Zbl = {1026.17030}
}

@Article{GrRa,
	Author = {Griffeth, Stephen and Ram, Arun},
	Title = {Affine {Hecke} algebras and the {Schubert} calculus},
	FJournal = {European Journal of Combinatorics},
	Journal = {Eur. J. Comb.},
	ISSN = {0195-6698},
	Volume = {25},
	Number = {8},
	Pages = {1263--1283},
	Year = {2004},
	Language = {English},
	DOI = {10.1016/j.ejc.2003.10.012},
	Keywords = {14M15,14C35,19L47,20G15},
	zbMATH = {2117799},
	Zbl = {1076.14068}
}

@Article{Ku,
	Author = {Kumar, Shrawan},
	Title = {Positivity in {{\(T\)}}-equivariant {{\(K\)}}-theory of flag varieties associated to {Kac}-{Moody} groups},
	FJournal = {Journal of the European Mathematical Society (JEMS)},
	Journal = {J. Eur. Math. Soc. (JEMS)},
	ISSN = {1435-9855},
	Volume = {19},
	Number = {8},
	Pages = {2469--2519},
	Year = {2017},
	Language = {English},
	DOI = {10.4171/JEMS/722},
	Keywords = {19L47,20G44},
	zbMATH = {6755107},
	Zbl = {1372.19005}
}

@Article{KoKu,
	Author = {Kostant, Bertram and Kumar, Shrawan},
	Title = {T-equivariant {K}-theory of generalized flag varieties},
	FJournal = {Journal of Differential Geometry},
	Journal = {J. Differ. Geom.},
	ISSN = {0022-040X},
	Volume = {32},
	Number = {2},
	Pages = {549--603},
	Year = {1990},
	Language = {English},
	DOI = {10.4310/jdg/1214445320},
	Keywords = {55N15,19L47},
	zbMATH = {4209084},
	Zbl = {0731.55005}
}

@Article{Gr,
	Author = {Graham, William},
	Title = {Positivity in equivariant {Schubert} calculus.},
	FJournal = {Duke Mathematical Journal},
	Journal = {Duke Math. J.},
	ISSN = {0012-7094},
	Volume = {109},
	Number = {3},
	Pages = {599--614},
	Year = {2001},
	Language = {English},
	DOI = {10.1215/S0012-7094-01-10935-6},
	Keywords = {14M17,14C17,14F43,14M15,17B37},
	zbMATH = {1820847},
	Zbl = {1069.14055}
}

@Article{Br,
	Author = {Brion, Michel},
	Title = {Positivity in the {Grothendieck} group of complex flag varieties.},
	FJournal = {Journal of Algebra},
	Journal = {J. Algebra},
	ISSN = {0021-8693},
	Volume = {258},
	Number = {1},
	Pages = {137--159},
	Year = {2002},
	Language = {English},
	DOI = {10.1016/S0021-8693(02)00505-7},
	Keywords = {14M15,14M17,13D15,14C17},
	zbMATH = {1868083},
	Zbl = {1052.14054}
}

@Article{CKM,
	Author = {Chiriv{\`{i}}, Rocco and Kumar, Shrawan and Maffei, Andrea},
	Title = {Components of {{\(V(\rho)\otimes V(\rho)\)}}},
	FJournal = {Transformation Groups},
	Journal = {Transform. Groups},
	ISSN = {1083-4362},
	Volume = {22},
	Number = {3},
	Pages = {645--650},
	Year = {2017},
	Language = {English},
	DOI = {10.1007/s00031-016-9375-8},
	Keywords = {17B10,17B20,17B22},
	zbMATH = {6793983},
	Zbl = {1422.17010}
}

@Article{KS,
	Author = {Kashiwara, Masaki and Shimozono, Mark},
	Title = {Equivariant {{\(K\)}}-theory of affine flag manifolds and affine {Grothendieck} polynomials},
	FJournal = {Duke Mathematical Journal},
	Journal = {Duke Math. J.},
	ISSN = {0012-7094},
	Volume = {148},
	Number = {3},
	Pages = {501--538},
	Year = {2009},
	Language = {English},
	DOI = {10.1215/00127094-2009-032},
	Keywords = {19L47,14M17,17B67,22E65},
	zbMATH = {5578930},
	Zbl = {1173.19004}
}

@Book{H,
	Author = {Hartshorne, Robin},
	Title = {Algebraic geometry},
	FSeries = {Graduate Texts in Mathematics},
	Series = {Grad. Texts Math.},
	ISSN = {0072-5285},
	Volume = {52},
	Year = {1977},
	Publisher = {Springer},
	Language = {English},
	Keywords = {14Axx,14Fxx,14Hxx,14Jxx,14Exx,14Nxx,14-02},
	zbMATH = {3572315},
	Zbl = {0367.14001}
}

@Misc{Go,
	Author = {Godement, Roger},
	Title = {Topologie alg{\'e}brique et th{\'e}orie des faisceaux},
	Year = {1958},
	Language = {French},
	HowPublished = {Actualit{\'e}s {Scientifiques} et {Industrielles},  vol. 1252. {Hermann, Paris}}
	Keywords = {55-02,18-02},
	zbMATH = {3131597},
	Zbl = {0080.16201}
}

@Book{S,
	Author = {Shafarevich, Igor R.},
	Title = {Basic algebraic geometry 1. {Varieties} in projective space. {Translated} from the {Russian} by {Miles} {Reid}},
	Edition = {3rd },
	ISBN = {978-3-642-37955-0; 978-3-642-37956-7},
	Year = {2013},
	Publisher = {Berlin: Springer},
	Language = {English},
	DOI = {10.1007/978-3-642-37956-7},
	Keywords = {14Axx,14-02,14B05,14Hxx,14-01,14C17,14A10,14C20,14E05},
	zbMATH = {6176082},
	Zbl = {1273.14004}
}

@Book{KM,
	Author = {Koll{\'a}r, J{\'a}nos and Mori, Shigefumi},
	Title = {Birational geometry of algebraic varieties. {With} the collaboration of {C}. {H}. {Clemens} and {A}. {Corti}},
	Edition = {Paperback reprint of the hardback  1998},
	FSeries = {Cambridge Tracts in Mathematics},
	Series = {Camb. Tracts Math.},
	ISSN = {0950-6284},
	Volume = {134},
	ISBN = {978-0-521-06022-6},
	Year = {2008},
	Publisher = {Cambridge: Cambridge University Press},
	Language = {English},
	Keywords = {14E30,14-02,14E05,14J30,14C35,14J40},
	zbMATH = {5273473},
	Zbl = {1143.14014}
}

@Book{CG,
	Author = {Chriss, Neil and Ginzburg, Victor},
	Title = {Representation theory and complex geometry},
	ISBN = {0-8176-3792-3},
	Year = {1997},
	Publisher = {Boston, MA: Birkh{\"a}user},
	Language = {English},
	Keywords = {22-02,17-02,14-02},
	zbMATH = {981542},
	Zbl = {0879.22001}
}

@article{Se,
	author = {Serre, J.-P.},
	journal = {Séminaire Claude Chevalley},
	language = {fre},
	pages = {1-37},
	publisher = {Secrétariat mathématique},
	title = {Espaces fibrés algébriques},
	url = {http://eudml.org/doc/108928},
	volume = {3},
	year = {1958},
}

@Article{El,
	Author = {Elkik, Renee},
	Title = {Singularit{\'e}s rationnelles et d{\'e}formations},
	FJournal = {Inventiones Mathematicae},
	Journal = {Invent. Math.},
	ISSN = {0020-9910},
	Volume = {47},
	Pages = {139--147},
	Year = {1978},
	Language = {French},
	DOI = {10.1007/BF01578068},
	Keywords = {14D15,14H15,32G10},
	zbMATH = {3565144},
	Zbl = {0363.14002}
}

@Book{J,
	Author = {Jantzen, Jens Carsten},
	Title = {Representations of algebraic groups.},
	Edition = {2nd },
	FSeries = {Mathematical Surveys and Monographs},
	Series = {Math. Surv. Monogr.},
	ISSN = {0076-5376},
	Volume = {107},
	ISBN = {0-8218-3527-0},
	Year = {2003},
	Publisher = {Providence, RI: American Mathematical Society (AMS)},
	Language = {English},
	Keywords = {20G05,20G10,20-01,14L15,17B10,14L17,20G15},
	zbMATH = {2019767},
	Zbl = {1034.20041}
}

@Book{D,
	Author = {Debarre, Olivier},
	Title = {Higher-dimensional algebraic geometry},
	FSeries = {Universitext},
	Series = {Universitext},
	ISSN = {0172-5939},
	ISBN = {0-387-95227-6},
	Year = {2001},
	Publisher = {New York, NY: Springer},
	Language = {English},
	Keywords = {14-01,14Jxx,14E30,14C20},
	zbMATH = {1634463},
	Zbl = {0978.14001}
}

@Article{KuS,
	Author = {Kumar, Shrawan and Schwede, Karl},
	Title = {Richardson varieties have Kawamata log terminal singularities},
	FJournal = {IMRN. International Mathematics Research Notices},
	Journal = {Int. Math. Res. Not.},
	ISSN = {1073-7928},
	Volume = {2014},
	Number = {3},
	Pages = {842--864},
	Year = {2014},
	Language = {English},
	DOI = {10.1093/imrn/rns241},
	Keywords = {14M15,14E30},
	zbMATH = {7357644},
	Zbl = {1466.14054}
}

@Book{Ka,
	Author = {Kashiwara, Masaki},
	Title = {The flag manifold of Kac-Moody Lie algebra, In: Algebraic Analysis, Geometry, and Number Theory (ed. J.-I. Igusa), 1989, pp. 161--190},
	Publisher = {John Hopkins Univeristy Press, Baltimore},
	Pages = {161-190},
	Year = {1989}
}

@Article{Deo,
	Author = {Deodhar, Vinay V.},
	Title = {On some geometric aspects of {Bruhat} orderings {II}. the parabolic analogue of {Kazhdan}-{Lusztig} polynomials},
	FJournal = {Journal of Algebra},
	Journal = {J. Algebra},
	ISSN = {0021-8693},
	Volume = {111},
	Pages = {483--506},
	Year = {1987},
	Language = {English},
	DOI = {10.1016/0021-8693(87)90232-8},
	Keywords = {22E65,20G10,14M15,22E46,20G05,14L40},
	zbMATH = {4071240},
	Zbl = {0656.22007}
}

@Book{BK,
	Author = {Brion, Michel and Kumar, Shrawan},
	Title = {Frobenius splitting methods in geometry and representation theory},
	FSeries = {Progress in Mathematics},
	Series = {Prog. Math.},
	ISSN = {0743-1643},
	Volume = {231},
	ISBN = {0-8176-4191-2},
	Year = {2005},
	Publisher = {Boston, MA: Birkh{\"a}user},
	Language = {English},
	Keywords = {14M15,20G15,20G05,14C05,13A35},
	zbMATH = {2146008},
	Zbl = {1072.14066}
}

@Book{I,
	Author = {Iyengar, Srikanth B. and Leuschke, Graham J. and Leykin, Anton and Miller, Claudia and Miller, Ezra and Singh, Anurag K. and Walther, Uli},
	Title = {Twenty-four hours of local cohomology},
	FSeries = {Graduate Studies in Mathematics},
	Series = {Grad. Stud. Math.},
	ISSN = {1065-7338},
	Volume = {87},
	ISBN = {978-0-8218-4126-6},
	Year = {2007},
	Publisher = {Providence, RI: American Mathematical Society (AMS)},
	Language = {English},
	Keywords = {13-02,13D45,13A35,13H10,13N10,14B15,13H05,13P10,13F55,14F40,55N30},
	zbMATH = {5218468},
	Zbl = {1129.13001}
}

@Article{KRW,
	Author = {Kumar, Shrawan and Rim{\'a}nyi, Rich{\'a}rd and Weber, Andrzej},
	Title = {Elliptic classes of Schubert varieties},
	FJournal = {Mathematische Annalen},
	Journal = {Math. Ann.},
	ISSN = {0025-5831},
	Volume = {378},
	Number = {1-2},
	Pages = {703--728},
	Year = {2020},
	Language = {English},
	DOI = {10.1007/s00208-020-02043-z},
	Keywords = {14M15,55N34,19L47},
	zbMATH = {7250842},
	Zbl = {1456.14060}
}

@misc{stack,
	title={The {Stacks} {Project}}, 
	howpublished={\url{https://stacks.math.columbia.edu/}}, 
	journal={Index-The Stacks project}
}

@Article{BaKu,
	Author = {Baldwin, Seth and Kumar, Shrawan},
	Title = {Positivity in {{\(T\)}}-equivariant {{\(K\)}}-theory of flag varieties associated to {Kac}-{Moody} groups {II}},
	FJournal = {Representation Theory},
	Journal = {Represent. Theory},
	ISSN = {1088-4165},
	Volume = {21},
	Pages = {35--60},
	Year = {2017},
	Language = {English},
	DOI = {10.1090/ert/494},
	Keywords = {19L47,14M15,20G44},
	zbMATH = {6731321},
	Zbl = {1390.19010}
}

@article{B,
	title={An explicit determination of the $K$-theoretic structure constants of the affine Grassmannian associated to $SL_2$}, 
	author={Seth Baldwin},
	year={2017},
	eprint={1703.08631},
	archivePrefix={arXiv},
	primaryClass={math.KT}
}
	\end{filecontents}

\vskip3ex
\noindent
J. C. and S.K.: Department of Mathematics, University of North Carolina,
Chapel Hill, NC 27599-3250, USA (emails: compja$@$live.unc.edu, shrawan$@$email.unc.edu)

\end{document}